\setlist[enumerate]{wide=0pt, widest=99,leftmargin=\parindent}
\setlist[itemize]{wide=0pt, widest=99,leftmargin=\parindent}
\pgfplotsset{compat=1.3}
\newtheorem{mythm}{Theorem}
\newtheorem{myprop}[mythm]{Proposition}
\newtheorem{mylem}[mythm]{Lemma}
\newtheorem{myrmk}[mythm]{Remark}
\theoremstyle{definition}
\newtheorem{definition}{Definition}
\newcommand{\Dc}[2]{\mathsf{D_c}(#1 \left\|\right. #2)}
\newcommand{\mc}{\mathcal}
\newcommand{\indic}[1]{\mathsf{1}_{#1}} 
\newcommand{\D}[2]{\mathsf{D}\!\left({#1}\right| \!\!\left|{#2}\right)} 
\DeclareMathOperator{\st}{s.t.}
\newcommand{\psip}{\Psi(x,P)}
\newcommand{\nuxt}{P}
\newcommand{\sxt}{S}
\newcommand{\gxt}{g}
\DeclareMathOperator*{\argmin}{argmin}
\DeclareMathOperator*{\argmax}{argmax}
\newcommand{\inprod}[2]{\ensuremath{\left\langle{#1}\vphantom{\big|},\vphantom{\big|}{#2}\right\rangle}}
\newcommand{\R}{\ensuremath{\mathbb{R}}}
\newcommand{\tr}[1]{{\rm tr}(#1)}
\newcommand{\mb}{\mathbb}
\renewcommand{\emph}{\textbf}
\def\N{\mathbb{N}}
\def\Re{\mathbb{R}}
\def\st{\mathrm{s.t.}}
\DeclareMathOperator{\cl}{cl}
\DeclareMathOperator{\interior}{int}
\newcommand{\drop}[1]{}
\begin{document}
\twocolumn[
\icmltitle{Distributionally Robust Optimization with Markovian Data}

\begin{icmlauthorlist}
\icmlauthor{Mengmeng Li}{epf}
\icmlauthor{Tobias Sutter}{epf}
\icmlauthor{Daniel Kuhn}{epf}
\end{icmlauthorlist}

\icmlaffiliation{epf}{Risk Analytics and Optimization Chair, \'Ecole Polytechnique F\'ed\'erale de Lausanne}

\icmlcorrespondingauthor{Mengmeng Li}{mengmeng.li@epfl.ch}
\vskip 0.3in
]
\printAffiliationsAndNotice{}
\begin{abstract}
We study a stochastic program where the probability distribution of the uncertain problem parameters is unknown and only indirectly observed via finitely many correlated samples generated by an unknown Markov chain with $d$ states. 
We propose a data-driven distributionally robust optimization model to estimate the problem's objective function and optimal solution. By leveraging results from large deviations theory, we derive statistical guarantees on the quality of these estimators. The underlying worst-case expectation problem is nonconvex and involves $\mathcal O(d^2)$ decision variables. Thus, it cannot be solved efficiently for large~$d$. By exploiting the structure of this problem, we devise a customized Frank-Wolfe algorithm with convex direction-finding subproblems of size $\mathcal O(d)$. We prove that this algorithm finds a stationary point efficiently under mild conditions. The efficiency of the method is predicated on a dimensionality reduction enabled by a dual reformulation. 
Numerical experiments indicate that our approach has better computational and statistical properties than the state-of-the-art methods.
\end{abstract}

\section{Introduction}

Decision problems under uncertainty are ubiquitous in machine learning, engineering and economics. Traditionally, such problems are modeled as stochastic programs that seek a decision $x\in X\subseteq \mb R^n$ minimizing an expected loss ${\mb E}_{\mb P}[L(x,\xi)]$, where the expectation is taken with respect to the distribution~$\mb P$ of the random problem parameter~$\xi$. However, more often than not, $\mb P$ is unknown to the decision maker and can only be observed indirectly via a finite number of training samples $\xi_1,\hdots, \xi_T$. The classical sample average approximation (SAA) replaces the unknown probability distribution $\mb P$ with the empirical distribution corresponding to the training samples and solves the resulting empirical risk minimization problem \cite{ref:Shapiro-14}. Fuelled by modern applications in machine learning, however, there has recently been a surge of alternative methods for data-driven optimization complementing SAA. Ideally, any meaningful approach to data-driven optimization should display the following desirable properties. \vspace{-12pt}
\begin{enumerate}[label = (\roman*)]\setlength\itemsep{-0.2em}
\item \label{item:optimizers:curse} \textbf{Avoiding the optimizer's curse.} It is well understood that decisions achieving a low empirical cost on the training dataset may perform poorly on a test dataset generated by the same distribution. In decision analysis, this phenomenon is called the \textit{optimizer's curse}, and it is closely related to overfitting in statistics~\cite{smith2006optimizer}. Practically useful schemes should mitigate this detrimental effect.
\item \textbf{Statistical guarantees.} \label{item:iid:assumption} Most statistical guarantees for existing approaches to data-driven optimization critically rely on the assumption that the training samples are independent and identically distributed (i.i.d.). This assumption is often not justifiable or even wrong in practice. Practically useful methods should offer guarantees that remain valid when the training samples display serial dependencies.
\item \textbf{Computational tractability.}\label{item:comp:tractability}~For a data-driven scheme to be practically useful it is indispensable that the underlying optimization problems can be solved efficiently.
\end{enumerate}\vspace{-7pt}
While the SAA method is computationally tractable, it is susceptible to the optimizer's curse if training data is scarce; see, {\em e.g.},~\cite{ref:vanParys:fromdata-17}. \textit{Distributionally robust optimization} (DRO) is an alternative approach that mitigates overfitting effects \cite{delage2010distributionally,goh2010distributionally, wiesemann2014distributionally}. DRO seeks worst-case optimal decisions that minimize the expected loss under the most adverse distribution from within a given ambiguity set, that is, a distribution family characterized by certain known properties of the unknown data-generating distribution. DRO has been studied since Scarf's seminal treatise on the ambiguity-averse newsvendor problem \cite{ref:scarf-58}, but it has gained thrust only with the advent of modern robust optimization techniques \cite{bertsimas2004price, ben2009robust}. In many cases of practical interest, DRO problems can be reformulated exactly as finite convex programs that are solvable in polynomial time. Indeed, such reformulations are available for many different ambiguity sets defined through generalized moment constraints \cite{delage2010distributionally,goh2010distributionally, wiesemann2014distributionally, ref:Kallus-18}, $\phi$-divergences \cite{ben2013robust,NIPS2016_4588e674}, Wasserstein distances \cite{Wass17Monh, ref:DROtutorial-19}, or maximum mean discrepancy distances \cite{ref:Jegelka:MMD-19,ref:Kirschner-20}. 

\hspace*{5mm} With the notable exceptions of \cite{ref:Dou-19,ref:Mannor-20,ref:Sutter-19,ref:duchi-2016}, we are not aware of any data-driven DRO models for non-i.i.d.\ data.
In this paper we apply the general framework by \citet{ref:Sutter-19} to data-driven DRO models with Markovian training samples and propose an efficient algorithm for their solution. Our DRO scheme is perhaps most similar to the one studied by \citet{ref:duchi-2016}, which can also handle Markovian data. 
However, this scheme differs from ours in two fundamental ways. First, while \citet{ref:duchi-2016} work with $\phi$-divergence ambiguity sets, we use ambiguity sets inspired by a statistical optimality principle recently established by \citet{ref:vanParys:fromdata-17} and~\citet{ref:Sutter-19} using ideas from large deviations theory. Second, the statistical guarantees by \citet{ref:duchi-2016} depend on unknown constants, whereas our confidence bounds are explicit and easy to evaluate. \citet{ref:Dou-19} assume that the training samples are generated by a first-order autoregressive process, which is neither a generalization nor a special case of a finite-state Markov chain. 
Indeed, while \citet{ref:Dou-19} can handle continuous state spaces, our model does not assume a linear dependence on the previous state. \citet{ref:Mannor-20} investigate a finite-state Markov decision process (MDP) with an unknown transition kernel, and they develop a DRO approach using a Wasserstein ambiguity set for estimating its value function. While MDPs are more general than the static optimization problems considered here, the out-of sample guarantees in~\citep[Theorem~4.1]{ref:Mannor-20} rely on the availability of several i.i.d.\ sample trajectories. In contrast, our statistical guarantees require only one single trajectory of (correlated) training samples. We also remark that MDP models can be addressed with online mirror descent methods~\cite{jin2020efficiently}. Unfortunately, it is doubtful whether such methods could be applied to our DRO problems because the statistically optimal ambiguity sets considered in this paper are nonconvex.

\hspace*{5mm} In summary, while many existing data-driven DRO models are tractable and can mitigate the optimizer's curse, there are hardly any statistical performance guarantees that apply when the training samples fail to be i.i.d.\ and when there is only one trajectory of correlated training samples. This paper addresses this gap. Specifically, we study data-driven decision problems where the training samples are generated by a time-homogeneous, ergodic Markov chain with~$d$ states. \citet{ref:Sutter-19} show that statistically optimal data-driven decisions for such problems are obtained by solving a DRO model with a conditional relative entropy  ambiguity set. The underlying worst-case expectation problem is nonconvex and involves~$d^2$ decision variables. To our best knowledge, as of now there exist no efficient algorithms for solving this hard optimization problem. We highlight the following main contributions of this paper.\vspace{-12pt}
\begin{itemize}\setlength\itemsep{-0.2em}
\item We apply the general framework by \citet{ref:Sutter-19}, which uses ideas from large deviations theory to construct statistically optimal data-driven DRO models, to decision problems where the training data is generated by a time-homogeneous, ergodic finite-state Markov chain. We prove that the resulting DRO models are asymptotically consistent.
\item We develop a customized Frank-Wolfe algorithm for solving the underlying worst-case expectation problems in an efficient manner. This is achieved via the following steps.\vspace{-2pt}
\begin{enumerate} [label = (\roman*)]\setlength\itemsep{-0em}
    \item We first reparametrize the problems to move the nonconvexities from the feasible set to the objective function.
    \item We then develop a Frank-Wolfe algorithm for solving the reparametrized nonconvex problem of size~$\mathcal O(d^2)$.
    \item Using a duality argument, we show that the direction-finding subproblems with a linearized objective function are equivalent to convex programs of size~$\mathcal O(d)$ with a rectangular feasible set that can be solved highly efficiently.
\end{enumerate}\vspace{-2pt}
\item We prove that the proposed Frank-Wolfe algorithm converges to a stationary point of the nonconvex worst-case expectation problem at a rate $\mc O(1/\sqrt{M})$, where $M$ denotes the number of iterations. Each iteration involves the solution of a convex $d$-dimensional minimization problem with a smooth objective function and a rectangular feasible set. The solution of this minimization problem could be further accelerated by decomposing it into $d$ one-dimensional optimization problems that can be processed in parallel.

\item We propose a Neyman-Pearson-type hypothesis test for determining the Markov chain that generated a given sequence of training samples, and we construct examples of Markov chains that are arbitrarily difficult to distinguish on the basis of a finite training dataset alone.

\item In the context of a revenue maximization problem where customers display a Markovian brand switching behavior, we show that the proposed DRO models for Markovian data outperform classical DRO models, which are either designed for i.i.d.\ data or based on different ambiguity sets. 
\end{itemize}\vspace{-12pt}
\paragraph*{Notation.}
The inner product of two vectors $a, b \in \mathbb{R}^{m}$ is denoted by $\langle a, b\rangle=a^{\top} b$, and the $n$-dimensional probability simplex is defined as $\Delta_n = \{x \in \mb R^n_+ : \sum_{i=1}^n x_i = 1\}$. The relative entropy between two probability vectors $p,q\in\Delta_n$ is defined as $\mathsf{D}(p \| q)=\sum_{i=1}^n p_i \log \left(p_i/q_i\right)$, where we use the conventions $0\log(0/q)=0$ for $q\geq 0$ and $p\log(p/0)=\infty$ for $p>0$. The closure and the interior of a subset $\mathcal D$ of a topological space are denoted by $\cl \mathcal D$ and $\mathsf{int}\mathcal D$, respectively. For any $n\in\mathbb N$ we set $[n]=\{1,\hdots,n\}$. 
We use~$A_{i\cdot}$ and~$A_{\cdot i}$ to denote the $i$-th row and column of a matrix~$A\in\mb R^{m\times n}$, respectively. All proofs are relegated to the Appendix.

\section{Data-driven DRO with Markovian Data}\label{sec:problem:statement}
In this paper we study the single-stage stochastic program
\begin{equation} \label{eq:SP}
\min_{x\in X} \mathbb{E}_{\mathbb{P}}[L(x,\xi)],
\end{equation}
where $X\subseteq \mb R^n$ is compact, $\xi$ denotes a random variable valued in $\Xi=\{1,2,\hdots,d \}$, and $L:X\times \Xi\to\mb R$ represents a known loss function that is continuous in~$x$. We assume that all random objects ({\em e.g.}, $\xi$) are defined on a probability space~$(\Omega, \mc F, \mb P)$ whose probability measure $\mb P$ is unknown but belongs to a known ambiguity set $\mc A$ to be described below. We further assume that $\mb P$ can only be observed indirectly through samples $\{\xi_t\}_{t\in\mb N}$ from a time-homogeneous ergodic Markov chain with state space $\Xi$ and deterministic initial state~$\xi_0\in\Xi$. We finally assume that the distribution of~$\xi$ coincides with the unique stationary distribution of the Markov chain. In the following we let $\theta^\star\in\mb R^{d\times d}_{++}$ be the unknown stationary probability mass function of the doublet $(\xi_t,\xi_{t+1})\in\Xi^2$, that is, we set
\begin{equation}\label{eq:def:doublet:frequency}
\theta^\star_{ij}=\lim_{t\to\infty}\mb P (\xi_t=i, \xi_{t+1}=j) \quad \forall i,j\in\Xi.
\end{equation}
By construction, the row sums of $\theta^\star$ must coincide with the respective column sums, and thus $\theta^\star$ must belong to
\begin{align*}
  \Theta\!=\!\Bigg\{\!\theta\in\mb R^{d\times d}_{++}:\! \sum_{i,j=1}^d \theta_{ij}=1, \sum_{j=1}^d\theta_{ij} = \sum_{j=1}^d \theta_{ji}~\forall i\in\Xi\!\Bigg\},
\end{align*}
that is, the set of all strictly positive doublet probability mass functions with balanced marginals. Below we call elements of $\Theta$ simply {\em models}. Note that any model $\theta\in\Theta$ encodes an ergodic time-homogeneous Markov chain with a unique row vector $\pi_\theta\in\mb R^{1\times d}_{++}$ of stationary probabilities and a unique transition probability matrix $P_{\theta}\in\mb R_{++}^{d\times d}$ defined through $(\pi_{\theta})_i=\sum_{j\in\Xi}\theta_{ij}$ and $(P_{\theta})_{ij}=\theta_{ij}/(\pi_\theta)_i$, respectively. Hence, the elements of $\pi_\theta$ sum to~$1$, $P_\theta$ represents a row-stochastic matrix, and $\pi_\theta=\pi_\theta P_\theta$. 
Moreover, any $\theta\in\Theta$ induces a probability measure~$\mb P_\theta$ on $(\Omega,\mc F)$ with
\begin{equation*}
  \mb P_\theta(\xi_t= i_t\;\forall t=1,\ldots, T ) = (P_\theta)_{\xi_0 i_{1}}\prod_{t=1}^{T-1} (P_\theta)_{i_t i_{t+1}}
\end{equation*}
for all $(i_1,\hdots,i_T)\in \Xi^T$ and $T\in\mb N$. We are now ready to define the ambiguity set as $\mc A=\{\mb P_{\theta}:\theta \in \Theta\}$. Below we use $\mb E_\theta$ to denote the expectation operator with respect to $\mb P_\theta$. By construction, there exists a model $\theta^\star\in\Theta$ corresponding to the unknown true probability measure~$\mb P$ such that $\mb P_{\theta^\star}=\mb P$. Estimating $\mb P$ is thus equivalent to estimating the unknown true parameter~$\theta^\star$. Given a finite training dataset $\{\xi_t\}_{t=1}^T$, a natural estimator for $\theta^\star$ is the empirical doublet distribution $\widehat\theta_T\in\mb R^{d\times d}_+$ defined through
\begin{equation} \label{MC:estimator}
	(\widehat\theta_T)_{ij} = \frac{1}{T} \sum_{t=1}^{T} \indic{(\xi_{t-1},\xi_{t})=(i,j)}
\end{equation}
for all $i,j\in\Xi$. The ergodic theorem ensures that $\widehat \theta_T$ converges $\mb P_\theta$-almost surely to $\theta$ for any model $\theta\in \Theta$ \citep[Theorem~4.1]{ross2010introduction}. 
In the remainder we define $\Theta'=\Delta_{d\times d}$ as the state space of~$\widehat\theta_T$, which is strictly larger than~$\Theta$. For ease of notation, we also define the {\em model-based predictor}
$c(x,\theta)= \mb E_\theta[L(x,\xi)]=\sum_{i,j=1}^dL(x,i)\theta_{ij}$ as the expected loss of~$x$ under model~$\theta$. Note that $c(x,\theta)$ is jointly continuous in~$x$ and~$\theta$ due to the continuity of $L(x,\xi)$ in~$x$. 



As we will show below, it is natural to measure the discrepancy between a model $\theta\in\Theta$ and an estimator realization $\theta'\in\Theta'$ by the conditional relative entropy. 
\begin{definition}[Conditional relative entropy] 
\label{def:conditional_relative_entropy}
The conditional relative entropy of $\theta'\in\Theta'$ with respect to $\theta\in \Theta$ is
\begin{align*}
	&\Dc{\theta'}{\theta} =\sum_{i=1}^d (\pi_{\theta'})_i \, \D{(P_{\theta'})_{i\cdot}}{(P_\theta)_{i\cdot}} \\&~=\sum_{i,j=1}^d \theta'_{ij} \left( \log\left( \frac{\theta'_{ij}}{\sum_{k=1}^d \theta'_{ik}} \right) -  \log\left( \frac{\theta_{ij}}{\sum_{k=1}^d \theta_{ik}}\right) \right),
\end{align*}
where the invariant distributions $\pi_{\theta},\pi_{\theta'}\in \mb R^{1\times d}_+$ and the transition probability matrices $P_{\theta},P_{\theta'}\in\mb R^{d\times d}_+$ corresponding to~$\theta$ and~$\theta'$, respectively, are defined in the usual way.
\end{definition}
Note that if the training samples $\{\xi_t\}_{t\in\mb N}$ are i.i.d., then the conditional relative entropy of~$\theta'$ with respect to~$\theta$ collapses to the relative entropy of~$\pi_{\theta'}$ with respect to~$\pi_\theta$. Using the conditional relative entropy, we can now define the \textit{distributionally robust predictor} $\widehat c_r:X\times\Theta'\to\mb R$ through
\begin{subequations}\label{eq:DRO:general}
\begin{equation} \label{def:DRO}
\widehat c_r(x,\theta') = \max\limits_{\theta\in\cl \Theta} \left\{ c(x,\theta) \ : \ \Dc{\theta'}{\theta}\leqslant r\right\} 
\end{equation}
if~\eqref{def:DRO} is feasible and $\widehat c_r(x,\theta') = \max_{\theta\in\cl \Theta} c(x,\theta)$ otherwise. Note that $\widehat c_r(x, \theta')$ represents the worst-case expected cost of~$x$ with respect to all probability measures in the ambiguity set~$\{\mb P_\theta:\Dc{\theta'}{\theta}\leqslant r\}$, which can be viewed as an image (in~$\mc A$) of a conditional relative entropy ball (in~$\Theta$) of radius~$r\geqslant0$ around $\theta'$. We know from \citep[Proposition~5.1]{ref:Sutter-19} that the sublevel sets of $\Dc{\theta'}{\theta}$ are compact for all fixed~$\theta'$, and thus the maximum in \eqref{def:DRO} is attained whenever the problem is feasible. 
The distributionally robust predictor~\eqref{def:DRO} also induces a {\em distributionally robust prescriptor} $\widehat x_r:\Theta'\to X$ defined through
\begin{equation} \label{def:DRO:decision}
\widehat x_r(\theta') \in \arg\min_{x\in X} \widehat c_r(x,\theta')
\end{equation}
\end{subequations}
Embracing the distributionally robust approach outlined above, if we only have access to $T$ training samples, we will implement the data-driven decision $\widehat x_r(\widehat\theta_T)$ and predict the corresponding expected cost as~$\widehat c_r( \widehat x_r(\widehat \theta_T) , \widehat \theta_T)$, which we will refer to, by slight abuse of terminology, as the {\em in-sample risk}. 
To assess the quality of the data-driven decision~$\widehat x_r(\widehat\theta_T)$ we use two performance measures that stand in direct competition with each other: the \textit{out-of-sample risk} $c(\widehat x_r(\widehat\theta_T),\theta)$ and the \textit{out-of-sample disappointment}
\begin{align*}
    &\mb P_\theta\left( c(\widehat x_r(\widehat\theta_T),\theta) > \widehat c_r (\widehat x_r(\widehat\theta_T),\widehat\theta_T)\right)
\end{align*}
with respect to any fixed model~$\theta$. Intuitively, the out-of-sample risk represents the true expected loss of~$\widehat x_r(\widehat\theta_T)$, and the out-of-sample disappointment quantifies the probability that the in-sample risk (the predicted loss) strictly underestimates the out-of-sample risk (the actual loss) under $\mb P_\theta$. Following \citet{ref:Sutter-19}, we will consider a data-driven decision as desirable if it has a {\em low} out-of-sample risk and a {\em low} out-of-sample disappointment under the unknown true model~$\theta^\star$. This is reasonable because the objective of the original problem~\eqref{sec:problem:statement} is to {\em minimize} expected loss. Optimistically underestimating the loss could incentivize na\"ive decisions that overfit to the training data and therefore lead to disappointment in out-of-sample tests. Thus, it makes sense to keep the out-of-sample disappointment small. 


\section{Statistical Guarantees}\label{ssec:statistical:guarantees}
The distributionally robust optimization model~\eqref{eq:DRO:general} with a conditional relative entropy ambiguity set has not received much attention in the extant literature and may thus seem exotic at first sight. As we will show below, however, this model offers---in a precise sense---{\em optimal} statistical guarantees if the training samples are generated by an (unknown) time-homogeneous ergodic Markov chain. The reason for this is that the empirical doublet distribution~\eqref{MC:estimator} is a sufficient statistic for~$\theta$ and satisfies a large deviation principle with rate function~$\Dc{\theta'}{\theta}$ under~$\mb P_\theta$ for any~$\theta\in\Theta$.


Before formalizing these statistical guarantees, we first need to recall some basic concepts from large deviation theory. We refer to the excellent textbooks by \citet{dembo2009large} and  \citet{hollander2008largedeviationis} for more details.
\begin{definition}[{Rate function \citep[Section~2.1]{dembo2009large}}]
\label{def:rate_function:original}
A function $I:\Theta'\times \Theta\rightarrow [0,\infty]$ is called a rate function if $I(\theta',\theta)$ is lower semi-continuous in $\theta'$. 
\end{definition}

\begin{definition}[Large deviation principle] \label{def:wldt}
An estimator~$\widehat\theta_T\in \Theta'$ satisfies a large deviation principle (LDP) with rate function $I$ if for all $\theta\in\Theta$ and Borel sets $\mc D \subseteq \Theta'$ we have
\begin{subequations}
\label{eq:ldp_exponential_rates:old}
\begin{align}
	\label{eq:ldp_exponential_rates_lb:old}
	-\inf_{\theta' \in \mathsf{int} {\mc D}} \, I(\theta', \theta)~& \leq \liminf_{T\to \infty}~\frac1T \log \mb P_{\theta}\left( \widehat \theta_T \in \mc D \right) \\
	& \leq \limsup_{T\to \infty}~\frac1T \log \mb P_\theta\left( \widehat \theta_T \in \mc D \right) 
	\\&\leq -\inf_{\theta' \in \cl  {\mc D}} \,  I(\theta', \theta).
	\label{eq:ldp_exponential_rates_ub:old}
\end{align}
\end{subequations}
\end{definition}
Conveniently, the empirical doublet distribution \eqref{MC:estimator} obeys an LDP with the conditional relative entropy as rate function.

\begin{mylem}[LDP for Markov chains {\citep[Theorem 3.1.13]{dembo2009large}}]\label{Lem:LDP}
The empirical doublet distribution~\eqref{MC:estimator} satisfies an LDP with rate function $\Dc{\theta'}{\theta}$.
\end{mylem}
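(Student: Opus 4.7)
The cleanest route is to invoke \citep[Theorem~3.1.13]{dembo2009large} verbatim, which establishes exactly this LDP for the empirical pair measure of an ergodic finite-state Markov chain and identifies the rate function with the conditional relative entropy of Definition~\ref{def:conditional_relative_entropy}. What follows sketches the underlying argument, which I would adapt if a self-contained treatment were needed.

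The plan is to combine the method of types with a combinatorial path count. First, I would observe that under $\mb P_\theta$, every sample path $(\xi_0,\xi_1,\ldots,\xi_T)$ has probability $\prod_{t=0}^{T-1}(P_\theta)_{\xi_t\xi_{t+1}}$, so its log-probability divided by~$T$ equals $\sum_{i,j}(\widehat\theta_T)_{ij}\log(P_\theta)_{ij}$. On the exponential scale the path probability therefore depends only on the empirical doublet distribution, confirming that $\widehat\theta_T$ is a sufficient statistic.

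Next, I would count the number of admissible sample paths whose empirical doublet distribution equals a prescribed type $\theta'\in\Theta'$ with $T\theta'\in\mb N^{d\times d}$. An Eulerian-circuit counting argument (Whittle's formula, equivalently the BEST theorem) yields, up to sub-exponential prefactors, a count of $\exp\bigl(-T\sum_{i,j}\theta'_{ij}\log(\theta'_{ij}/(\pi_{\theta'})_i)\bigr)$, i.e.\ the exponential of $T$ times the conditional Shannon entropy of $\theta'$. Multiplying this count by the path probability from the previous step and collecting terms produces exactly $\exp(-T\,\Dc{\theta'}{\theta})$.

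Finally, I would upgrade this pointwise type estimate to the full LDP bounds \eqref{eq:ldp_exponential_rates_lb:old}--\eqref{eq:ldp_exponential_rates_ub:old}: the lower bound over open sets follows by approximating any interior point of $\mc D$ by a sequence of rational types, while the upper bound over closed sets follows from a finite union bound (the number of distinct doublet types is polynomial in $T$) combined with the lower semicontinuity and compact sublevel sets of $\Dc{\cdot}{\theta}$. The main technical obstacle is the Eulerian realisability constraint: the integer matrix $T\theta'$ need not correspond to an actual path unless its row and column sums balance, and a boundary correction of order $O(1)$ at the endpoints $(\xi_0,\xi_T)$ must be absorbed into the sub-exponential prefactor. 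Since \citep{dembo2009large} handles precisely this boundary accounting for the ergodic finite-state setting, I would simply invoke their Theorem~3.1.13 and match the resulting rate function with Definition~\ref{def:conditional_relative_entropy}.
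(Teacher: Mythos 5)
Your proposal is correct and matches the paper exactly: the paper offers no proof of this lemma, simply attributing it to \citep[Theorem~3.1.13]{dembo2009large}, which is precisely the route you take. Your supplementary method-of-types sketch (path probability depending only on the doublet type, Eulerian path counting, and the union bound over polynomially many types) is a faithful outline of the standard argument behind that theorem, but it is not needed here since the result is invoked as a black box.
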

Lemma~\ref{Lem:LDP} is the key ingredient to establish the following out-of-sample guarantees for the distributionally robust predictor and the corresponding prescriptor.
\begin{mythm}[{Out-of-sample guarantees \citep[Theorems~3.1 \&~3.2]{ref:Sutter-19}}]\label{thm:DRO:guarantees}
The distributionally robust predictor and prescriptor defined in \eqref{eq:DRO:general} offer the following guarantees.\vspace{-10pt}
\begin{enumerate}[label = (\roman*)]\setlength\itemsep{-0.2em}
\item \label{item:DRO:guarantees:predictor} For all $\theta\in\Theta$, $x\in X$, we have\vspace{-7pt}
\begin{align*}
    \limsup_{T\to\infty} \frac{1}{T} \log \mb P_\theta \left( c(x,\theta) > \widehat c_r(x,\widehat\theta_T)\right)\leqslant -r.
\end{align*}\vspace{-7pt}
\item \label{item:DRO:guarantees:prescriptor} For all $\theta\in\Theta$, we have\vspace{-7pt}
\begin{align*}
\limsup_{T\to\infty} \frac{1}{T} \log \mb P_\theta \left( c(\widehat x_r(\widehat\theta_T),\theta) > \widehat c_r(\widehat x_r(\widehat\theta_T),\widehat\theta_T)\right)\leqslant -r.\end{align*}
\end{enumerate}
\end{mythm}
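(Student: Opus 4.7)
The proof plan is to translate each probabilistic event into a deterministic statement about the realization of the empirical doublet distribution $\widehat\theta_T$ and then invoke the LDP upper bound from Lemma~\ref{Lem:LDP}.

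For part~\ref{item:DRO:guarantees:predictor}, fix $\theta\in\Theta$ and $x\in X$, and introduce the set
\begin{align*}
\mc D_r=\{\theta'\in\Theta':\Dc{\theta'}{\theta}\geqslant r\}.
\end{align*}
Because $\Dc{\cdot}{\theta}$ is continuous on $\Theta'$ when $\theta\in\Theta$ is strictly positive (using the convention $0\log 0=0$), the set $\mc D_r$ is closed. The heart of the argument is the containment
\begin{align*}
\bigl\{c(x,\theta)>\widehat c_r(x,\widehat\theta_T)\bigr\}\subseteq\bigl\{\widehat\theta_T\in\mc D_r\bigr\}.
\end{align*}
To establish it, suppose a realization $\theta'=\widehat\theta_T$ satisfies $\Dc{\theta'}{\theta}<r$. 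Then $\theta$ is itself feasible in~\eqref{def:DRO} (since $\theta\in\Theta\subseteq\cl\Theta$), so $\widehat c_r(x,\theta')\geqslant c(x,\theta)$ by the defining $\max$, contradicting the left-hand event. If instead~\eqref{def:DRO} is infeasible at $\theta'$, then by definition $\widehat c_r(x,\theta')=\max_{\vartheta\in\cl\Theta}c(x,\vartheta)\geqslant c(x,\theta)$, again ruling out the event. Applying the LDP upper bound~\eqref{eq:ldp_exponential_rates_ub:old} to the closed set $\mc D_r$ now yields
\begin{align*}
\limsup_{T\to\infty}\frac{1}{T}\log\mb P_\theta\bigl(c(x,\theta)>\widehat c_r(x,\widehat\theta_T)\bigr)\leqslant -\inf_{\theta'\in\mc D_r}\Dc{\theta'}{\theta}\leqslant -r,
\end{align*}
the last inequality being the definition of $\mc D_r$.

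For part~\ref{item:DRO:guarantees:prescriptor}, the crucial observation is that the implication ``$\Dc{\widehat\theta_T}{\theta}<r\Rightarrow \widehat c_r(x,\widehat\theta_T)\geqslant c(x,\theta)$'' derived above holds uniformly in $x\in X$, and therefore remains valid when $x$ is replaced by the random, $\widehat\theta_T$-measurable selection $\widehat x_r(\widehat\theta_T)$. The identical set containment with $x=\widehat x_r(\widehat\theta_T)$ and the same application of the LDP then yield the conclusion. I expect the only real obstacle to be identifying the correct set $\mc D_r$ and verifying that it is closed; once these are in place, the proof reduces to a direct invocation of Lemma~\ref{Lem:LDP}. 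A minor sanity check is that the events in question are measurable, which holds in our finite-dimensional setting because $\widehat c_r(x,\cdot)$ is the value function of a continuous program over a closed feasible set and is therefore Borel-measurable in $\theta'$.
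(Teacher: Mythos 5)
Your proposal is correct, and it is essentially the argument underlying the cited result: the paper itself does not reprove this theorem but imports it from \citet{ref:Sutter-19} (Theorems~3.1 and~3.2), with Lemma~\ref{Lem:LDP} supplying the required LDP for the empirical doublet distribution. Your event inclusion $\{c(x,\theta)>\widehat c_r(x,\widehat\theta_T)\}\subseteq\{\Dc{\widehat\theta_T}{\theta}\geq r\}$ (valid uniformly in $x$, hence also for the data-dependent $\widehat x_r(\widehat\theta_T)$), together with the closedness of the superlevel set via the continuity of $\Dc{\cdot}{\theta}$ from Proposition~\ref{prop:properties:conditional:entropy} and the LDP upper bound~\eqref{eq:ldp_exponential_rates_ub:old}, is exactly the mechanism behind those cited theorems, so your write-up amounts to a correct, self-contained unpacking of the reference.
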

Theorem~\ref{thm:DRO:guarantees} asserts that the out-of-sample disappointment of the distributionally robust predictor and the corresponding prescriptor decay at least as fast as $e^{-rT+o(T)}$ asymptotically as the sample size~$T$ grows, where the decay rate~$r$ coincides with the radius of the conditional relative entropy ball in~\eqref{eq:DRO:general}. 
Note also that~$\widehat x_r(\widehat\theta_T)$ can be viewed as an instance of a data-driven prescriptor, that is, any Borel measurable function that maps the training samples $\{\xi_t\}_{t=1}^T$ to a decision in~$X$. One can then prove that~$\widehat x_r(\widehat\theta_T)$ offers the best (least possible) out-of-sample risk among the vast class of data-driven prescriptors whose out-of-sample disappointment decays at a prescribed rate of at least~$r$. Maybe surprisingly, this optimality property holds uniformly across all models~$\theta\in\Theta$ and thus in particular for the unknown true model~$\theta^\star$; see \citep[Theorem~3.2]{ref:Sutter-19}. It is primarily this Pareto dominance property that makes the distributionally robust optimization model~\eqref{eq:DRO:general} interesting.

Next, we prove that if the radius $r$ of the conditional relative entropy ball tends to $0$ at a sufficiently slow speed as~$T$ increases, then the distributionally robust predictor converges $\mb P_\theta$-almost surely to the true expected loss of the decision~$x$. 


\begin{mythm}[Strong asymptotic consistency] \label{thm:asymptotic:consistency}
If $r_T\geq \frac{d}{T}$ for all $T\in\N$ and $\lim_{T\to\infty} r_T = 0$, then 
\[
    \lim_{T\to\infty} \! \widehat c_{r_T}(x,\widehat\theta_T) = c(x,\theta)\quad\mb P_\theta\text{-a.s.}\quad\forall x\in X,\; \theta\in\Theta.
\]
\end{mythm}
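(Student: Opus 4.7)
The plan is to show $\widehat c_{r_T}(x, \widehat\theta_T) \to c(x, \theta)$ $\mb P_\theta$-almost surely by sandwiching between matching limsup and liminf bounds on the almost-sure event $E = \{\widehat\theta_T \to \theta\}$, whose complement is $\mb P_\theta$-null by the ergodic theorem invoked after~\eqref{MC:estimator}. I would use throughout that $c(x,\cdot)$ is continuous on the compact set $\cl\Theta$ and that $\theta \in \Theta$ has strictly positive entries, so $\Dc{\cdot}{\theta}$ is continuous at $\theta$ with a unique zero.

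The core technical step is to exhibit, for each $\omega \in E$ and all $T$ sufficiently large, a feasible model $\bar\theta_T \in \cl\Theta$ with $\Dc{\widehat\theta_T}{\bar\theta_T} \leqslant r_T$ and $\bar\theta_T \to \theta$. The key observation is that the empirical doublet distribution is \emph{nearly} balanced: from the telescoping identity $\sum_j (\widehat\theta_T)_{ij} - \sum_j (\widehat\theta_T)_{ji} = \tfrac{1}{T}(\indic{\xi_0=i} - \indic{\xi_T=i})$, its row and column marginals differ by at most $1/T$. I would define $\bar\theta_T$ as an explicit $O(1/T)$-perturbation of $\widehat\theta_T$ that redistributes this boundary mass to restore balanced marginals (mixing in a vanishing amount of $\theta$ if needed to keep entries inside $\cl\Theta$ and to align supports). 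A row-by-row calculation using $\log(1+u) \leqslant u$ on the ratio $\widehat\theta_T/\bar\theta_T$ should then yield $\Dc{\widehat\theta_T}{\bar\theta_T} \leqslant d/T \leqslant r_T$. This establishes both eventual feasibility of~\eqref{def:DRO} and, via continuity of $c$, the bound $\liminf_T \widehat c_{r_T}(x, \widehat\theta_T) \geqslant \lim_T c(x, \bar\theta_T) = c(x, \theta)$.

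For the matching limsup bound, pick a maximizer $\tilde\theta_T \in \cl\Theta$ of~\eqref{def:DRO} with $\theta'=\widehat\theta_T$ and $r=r_T$, available once feasibility is secured. Then $\Dc{\widehat\theta_T}{\tilde\theta_T} \leqslant r_T \to 0$, and on~$E$ any subsequential limit $\theta_\infty \in \cl\Theta$ of $\tilde\theta_T$, extracted via compactness of $\cl\Theta$, satisfies $\Dc{\theta}{\theta_\infty} \leqslant \liminf_T \Dc{\widehat\theta_T}{\tilde\theta_T} = 0$ by joint lower semi-continuity of the conditional relative entropy. Since $\theta$ has full support and $\Dc{\cdot}{\cdot}$ is a divergence, this forces $\theta_\infty = \theta$, so the full sequence converges, $\tilde\theta_T \to \theta$. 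Continuity of $c$ then yields $\limsup_T \widehat c_{r_T}(x, \widehat\theta_T) \leqslant c(x,\theta)$, completing the sandwich.

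The main obstacle is the explicit entropy estimate used in the feasibility construction. One must design the perturbation $\bar\theta_T$ so that its conditional-relative-entropy distance to $\widehat\theta_T$ is bounded \emph{sharply} by $d/T$, matching the hypothesis $r_T \geqslant d/T$; the factor $d$ reflects that the boundary-mass imbalance of total weight $1/T$ may have to be redistributed across up to $d$ rows of the induced transition kernel, and one has to argue carefully when some empirical doublet frequency vanishes. The joint lower semi-continuity of $\Dc{\cdot}{\cdot}$ used above is a standard property of the rate function appearing in Lemma~\ref{Lem:LDP}.
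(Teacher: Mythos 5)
Your proposal is correct and follows essentially the same route as the paper: secure feasibility of~\eqref{def:DRO} by exhibiting a balanced model within conditional relative entropy $d/T$ of $\widehat\theta_T$, show via compactness of $\cl\Theta$, lower semi-continuity of the conditional relative entropy and the identity of indiscernibles that any maximizer converges to $\theta$ along the almost-sure event $\{\widehat\theta_T\to\theta\}$, and conclude by continuity of $c(x,\cdot)$. The one step you leave as a sketch---the perturbation of $\widehat\theta_T$ achieving the sharp $d/T$ bound---is exactly what the paper's Lemma~\ref{lem:feasiblity} supplies with the ``ghost transition'' estimator $(\widetilde\theta_T)_{ij}=\frac{1}{T+1}\bigl(\sum_{t=1}^T \indic{(\xi_{t-1},\xi_t)=(i,j)}+\indic{(\xi_T,\xi_1)=(i,j)}\bigr)$, whose conditional relative entropy distance to $\widehat\theta_T$ is bounded by $d/T$ via the same $\log(1+t)\leq t$ estimate you anticipate.
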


\section{Numerical Solution}\label{ssec:approximation:algorithm}

Even though the distributionally robust optimization model~\eqref{eq:DRO:general} offers powerful statistical guarantees, it is practically useless unless the underlying optimization problems can be solved efficiently. In this section we develop a numerical procedure to compute the predictor~\eqref{def:DRO} for a fixed~$x$ and~$\theta'$. 
This is challenging for the following reasons. First, the conditional relative entropy~$\Dc{\theta'}{\theta}$ is nonconvex in~$\theta$ (see Remark~\ref{rmk:nonconvexity} in Appendix~\ref{app:sec:markov:chains}), and thus problem~\eqref{def:DRO} has a nonconvex feasible set. In addition, problem~\eqref{def:DRO} involves $d^2$ decision variables (the entries of the matrix~$\theta$) and thus  its dimension is high already for moderate values of~$d$.

In the following we reformulate~\eqref{def:DRO} as an equivalent optimization problem with a convex feasible set and a nonconvex objective function, which is amenable to efficient numerical solution via a customized Frank-Wolfe algorithm \cite{frankwolfe, pmlr-v28-jaggi13}. To this end, recall from Definition~\ref{def:conditional_relative_entropy} that the conditional relative entropy $\Dc{\theta'}{\theta}$ can be expressed as a weighted sum of relative entropies between corresponding row vectors of the transition probability matrices $P_{\theta '}$ and $P_{\theta}$. By construction, $P_{\theta}$ is a row-stochastic matrix, and the stationary distribution~$\pi_\theta$ corresponding to~$P_\theta$ is a normalized non-negative row vector with $\pi_\theta P_\theta =\pi_\theta$. Routine calculations show that
\begin{equation} \label{eq:pi:as:function:of:P}
	(\pi_\theta)^\top = (A_d(P_\theta))^{-1} \begin{bmatrix}
0&0& \hdots & 1
\end{bmatrix}^\top,
\end{equation}
where the matrix $A_d(P_\theta)\in\mb R^{d\times d}$ is defined through
\begin{equation*} 
	(A_d(P_\theta))_{ij}=\begin{cases}
(P_\theta)_{ii}-1 & \text{for} \ i=j, i<d \\
(P_\theta)_{ji} & \text{for} \ i\ne j, i<d \\
1 & \text{for} \ i=d.
\end{cases}
\end{equation*}  
From now on we denote by ${\mc P}\subseteq \Re^{d\times d}$ the set of all transition probability matrices of ergodic 
Markov chains on~$\Xi$ with strictly positive entries. Applying the variable substitution $P_\theta\leftarrow\theta$ and using~\eqref{eq:pi:as:function:of:P}, we can now reformulate the optimization problem~\eqref{def:DRO} in terms of~$P_\theta$.
\begin{mylem}[Reformulation]\label{reparam}
	If $\theta'>0$, then the worst-case expectation problem~\eqref{def:DRO} is equivalent to
	\begin{equation} \label{eq:lipschitz:DRO:problem}
	\widehat c_r(x,\theta')= \max_{P\in {\mc D}_r(\theta')} \Psi(x,P),
	\end{equation}
	where $\Psi(x,P)=\sum_{i=1}^d L(x,i) (A_d(P)^{-1})_{id}$ and
	\[
	    \textstyle \mc D_r(\theta') = \left\{ P \in{\mc P}: \sum_{i=1}^d (\pi_{\theta'})_i\D{(P_{\theta^\prime})_{i\cdot}}{(P)_{i\cdot}}\leqslant r\right\}.
	\]
\end{mylem}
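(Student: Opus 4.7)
The plan is to establish a bijection between the feasible model set $\Theta$ used in \eqref{def:DRO} and the reparametrized set $\mc P$ of transition matrices used in \eqref{eq:lipschitz:DRO:problem}, and then verify that the objective and the conditional relative entropy constraint transform correctly under this bijection. The bijection is the natural one: each $\theta \in \Theta$ gives rise to $P_\theta \in \mc P$ with $(P_\theta)_{ij} = \theta_{ij}/(\pi_\theta)_i$; conversely, each $P \in \mc P$ has a unique strictly positive stationary distribution $\pi > 0$ by ergodicity, from which one recovers $\theta_{ij} = \pi_i P_{ij} \in \Theta$. The defining marginal balance $\sum_j \theta_{ij} = \sum_j \theta_{ji}$ of $\Theta$ is equivalent, modulo normalization, to the stationarity equation $\pi P = \pi$, so the two constructions are mutually inverse.

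Next I would argue that the closure in $\max_{\theta \in \cl \Theta}$ plays no role when $\theta' > 0$: since every entry of $P_{\theta'}$ is then strictly positive, finiteness of $\D{(P_{\theta'})_{i\cdot}}{(P_\theta)_{i\cdot}}$ forces $(P_\theta)_{ij} > 0$ and hence $\theta_{ij} > 0$ for all $i,j$. Therefore the constraint $\Dc{\theta'}{\theta}\leq r$ automatically excludes any $\theta$ on the boundary of $\cl \Theta$, and the feasible set of \eqref{def:DRO} is in bijection with $\mc D_r(\theta')$ via $P = P_\theta$. Under this substitution, $c(x,\theta) = \sum_{i,j} L(x,i)\theta_{ij} = \sum_i L(x,i)(\pi_\theta)_i$, and formula~\eqref{eq:pi:as:function:of:P} rewrites this as $\sum_i L(x,i)(A_d(P)^{-1})_{id} = \Psi(x,P)$; simultaneously, Definition~\ref{def:conditional_relative_entropy} already expresses $\Dc{\theta'}{\theta}$ as $\sum_i (\pi_{\theta'})_i \D{(P_{\theta'})_{i\cdot}}{(P_\theta)_{i\cdot}}$, which under the substitution is precisely the inequality defining $\mc D_r(\theta')$. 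Assembling these observations yields \eqref{eq:lipschitz:DRO:problem} whenever \eqref{def:DRO} is feasible, which is the setting relevant to the lemma.

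The main obstacle is the linear-algebraic identity~\eqref{eq:pi:as:function:of:P}, which the paper relegates to ``routine calculations.'' I would verify it by multiplying $A_d(P)$ on the right by the column vector $\pi^\top$: the first $d-1$ rows yield $(\pi P - \pi)_i$ by direct inspection of the column structure of $A_d$, while the last row yields $\sum_i \pi_i$. Hence $A_d(P)\pi^\top = e_d$ encodes $d-1$ of the stationarity equations together with the normalization $\pi\mathbf{1}=1$, and this system has the unique solution $\pi^\top = A_d(P)^{-1} e_d$ because ergodicity of $P\in \mc P$ makes the kernel of $P^\top - I$ one-dimensional, so replacing one redundant stationarity equation by the normalization produces a nonsingular system. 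This invertibility, together with the entropy-based boundary argument, is what makes the reparametrization $\theta \leftrightarrow P_\theta$ a genuine equivalence between \eqref{def:DRO} and \eqref{eq:lipschitz:DRO:problem} rather than a mere formal rewriting.
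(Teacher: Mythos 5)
Your proposal is correct and follows essentially the same route as the paper's proof: it uses the identification $\theta\leftrightarrow P_\theta$, the observation that $\theta'>0$ forces strict positivity of any feasible transition matrix via finiteness of the relative entropy, the nonsingularity of $A_d(P)$ (which the paper obtains by citing Berman--Plemmons and Perron--Frobenius, while you sketch the equivalent kernel argument directly), and the resulting identity $c(x,\theta)=\sum_i L(x,i)(A_d(P)^{-1})_{id}=\Psi(x,P)$. No substantive differences.
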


In the following we will sometimes abbreviate~$\mc D_r(\theta')$ by $\mc D$ when the dependence on~$\theta'$ and~$r$ is irrelevant. Problem~\eqref{eq:lipschitz:DRO:problem} is more attractive than~\eqref{def:DRO} from a computational point of view because its feasible set is closely related to the popular relative entropy uncertainty sets, which have received considerable attention in robust optimization \cite{ben2013robust}. On the other hand, while the objective function~$c(x,\theta)$ of the original problem~\eqref{def:DRO} was linear in~$\theta$, the objective function $\psip$ of \eqref{eq:lipschitz:DRO:problem} is nonconvex in~$P$. These properties suggest that~\eqref{eq:lipschitz:DRO:problem} can be solved efficiently and exactly if its objective function is replaced with a linear approximation. We thus develop a customized Frank-Wolfe algorithm that generates approximate solutions for problem~\eqref{eq:lipschitz:DRO:problem} by solving a sequence of linearized oracle subproblems. While our main Frank-Wolfe routine is presented in Algorithm~\ref{alg:FW}, its key subroutine for solving the oracle subproblems is described in Algorithm~\ref{alg:subprob}.

\begin{algorithm}[tb]
      \caption{Frank-Wolfe algorithm for solving~\eqref{eq:lipschitz:DRO:problem}}
      \label{alg:FW}
\begin{algorithmic}[1]
  \STATE {\bfseries Input:} $\ r>0,\ \theta'>0, \ x\in X, \ \varepsilon>0,\ \pi_{\theta'},\ P_{\theta^\prime}$
  \STATE {\bfseries Output:} $P$ 
  \STATE Initialize  $\nuxt^{(0)}, \gxt^{(0)} = 2\varepsilon, m=0$
  \WHILE{$\gxt^{(m)} > \varepsilon$}
  \STATE Compute $\displaystyle\sxt^{(m)} \in \argmax_{S \in {\mc D}} \langle S, \nabla_{P} \Psi(x,\nuxt^{(m)})\rangle$  \label{line:linearized:subprob}
  \STATE Compute $\gxt^{(m)} =  \inprod{\sxt^{(m)} - \nuxt^{(m)}}{\nabla_{P} \Psi(x,\nuxt^{(m)})}$
  \IF{$ \gxt^{(m)} \leqslant \varepsilon$}
  \STATE Return $\nuxt=\nuxt^{(m)}$ \ENDIF
  \STATE Compute $\!\displaystyle\gamma_m \!\!\in \!\!\argmax_{\gamma \in [0,1]}  \! \Psi(x,(\nuxt^{(m)} \!+ \!\gamma (\sxt^{(m)} \!- \!\nuxt^{(m)})))$ \label{line:linesearch}
  \STATE $\nuxt^{(m+1)} = \nuxt^{(m)} + \gamma_m (\sxt^{(m)} - \nuxt^{(m)})$
  \STATE $m \rightarrow m+1$
  \ENDWHILE
\end{algorithmic}
\end{algorithm}
Algorithm~\ref{alg:FW} generates a sequence of approximate solutions for problem \eqref{eq:lipschitz:DRO:problem} that enjoy rigorous optimality guarantees. 

\begin{mythm}[Convergence of Algorithm~\ref{alg:FW}] \label{thm:global:convergence:FW}
For any fixed $\varepsilon>0$ and $\theta'>0$, Algorithm~\ref{alg:FW} finds an approximate stationary point for problem~\eqref{eq:lipschitz:DRO:problem} with a Frank-Wolfe gap of at most $\varepsilon$ in $\mc O(1/\varepsilon^2)$ iterations.
\end{mythm}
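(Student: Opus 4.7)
}
The plan is to recast the algorithm as a Frank--Wolfe method applied to the maximization of a smooth (nonconvex) function over a convex compact set and then invoke the standard nonconvex Frank--Wolfe convergence rate of \citet{pmlr-v28-jaggi13} / Lacoste-Julien. The three ingredients I need are: (i)~the feasible set $\mathcal D_r(\theta')$ is convex, compact and bounded away from the boundary of $\mathcal P$; (ii)~$\Psi(x,\cdot)$ is $C^{1,1}$ on $\mathcal D_r(\theta')$ with an explicit Lipschitz constant on its gradient; (iii)~each linesearch step in line~\ref{line:linesearch} ensures a sufficient increase per iteration.

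First I would verify (i). Convexity of $\mathcal D_r(\theta')$ follows because relative entropy $\mathsf{D}(\cdot\|\cdot)$ is jointly convex in both arguments and $(P_{\theta'})_{i\cdot}$ is held fixed, so $\sum_{i}(\pi_{\theta'})_i\mathsf{D}((P_{\theta'})_{i\cdot}\|P_{i\cdot})$ is convex in $P$; intersecting with the row-stochastic affine constraints preserves convexity. The crucial observation for compactness is that $\theta'>0$ implies $(P_{\theta'})_{ij}>0$ and $(\pi_{\theta'})_i>0$ for all $i,j$; hence the KL bound $\leqslant r<\infty$ forces
\[
(P_{\theta'})_{ij}\log\!\left(\tfrac{(P_{\theta'})_{ij}}{P_{ij}}\right)\leqslant r/\bigl(\min_i(\pi_{\theta'})_i\bigr)
\]
for every $(i,j)$, which produces a uniform lower bound $P_{ij}\geqslant p_{\min}(r,\theta')>0$. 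Consequently $\cl\mathcal D_r(\theta')\subset\mathcal P$, and every $P\in\mathcal D_r(\theta')$ is the transition matrix of an ergodic chain, so $A_d(P)$ is invertible and varies smoothly on $\mathcal D_r(\theta')$.

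Next I would address (ii). Writing $\Psi(x,P)=\sum_i L(x,i)\bigl(A_d(P)^{-1}\bigr)_{id}$ and using the matrix inversion identity $\partial A^{-1}=-A^{-1}(\partial A)A^{-1}$, one gets explicit formulas for $\nabla_P\Psi$ and $\nabla^2_P\Psi$ that are polynomial in the entries of $A_d(P)^{-1}$. Because $A_d$ depends linearly on $P$ and, by (i), the smallest singular value $\sigma_{\min}(A_d(P))$ is bounded below by a constant depending only on $(r,\theta',d)$, the operator norms of $A_d(P)^{-1}$ and of its derivatives admit uniform bounds on $\mathcal D_r(\theta')$. Combining these bounds with $\|L(x,\cdot)\|_\infty\leqslant\bar L$ (a consequence of compactness of $X$ and continuity of $L$) yields a finite constant $L_\Psi(x,r,\theta')$ such that $\nabla_P\Psi(x,\cdot)$ is $L_\Psi$-Lipschitz on $\mathcal D_r(\theta')$. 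This is the main technical hurdle, since it is where the geometry of the reparametrized feasible set really matters; the argument essentially shows that staying strictly inside $\mathcal P$ makes $A_d(P)^{-1}$ a bounded smooth map.

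Finally I would apply the nonconvex Frank--Wolfe rate. Define the (maximization-type) Frank--Wolfe gap $g^{(m)}=\max_{S\in\mathcal D}\langle S-P^{(m)},\nabla_P\Psi(x,P^{(m)})\rangle$, which is exactly the quantity computed in line~6. By the descent lemma for $L_\Psi$-smooth functions,
\[
\Psi(x,P^{(m+1)})\geqslant\Psi(x,P^{(m)})+\gamma g^{(m)}-\tfrac{\gamma^2 L_\Psi\,\mathrm{diam}(\mathcal D)^2}{2}
\]
for every $\gamma\in[0,1]$; the exact linesearch in line~\ref{line:linesearch} dominates the choice $\gamma=\min\{1,g^{(m)}/(L_\Psi\mathrm{diam}(\mathcal D)^2)\}$. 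Summing the resulting per-iteration progress over $m=0,\ldots,M-1$ and using the boundedness $\Psi(x,P)\leqslant\bar\Psi<\infty$ on $\mathcal D_r(\theta')$ gives the standard telescoping inequality
\[
\min_{0\leqslant m<M} g^{(m)}\leqslant\frac{C(\bar L,\theta',r,d)}{\sqrt{M}},
\]
with an explicit constant $C$ depending only on $L_\Psi$, $\mathrm{diam}(\mathcal D)$ and $\bar\Psi-\Psi(x,P^{(0)})$. The stopping criterion $g^{(m)}\leqslant\varepsilon$ is therefore met after at most $M=\lceil C^2/\varepsilon^2\rceil=\mathcal O(1/\varepsilon^2)$ iterations, which is the claimed rate.
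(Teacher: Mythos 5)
Your proposal is correct and follows essentially the same route as the paper: establish that $\mathcal D_r(\theta')$ is convex and compact with all entries bounded away from zero (the paper's Proposition~\ref{prop:D:convex} and the proof of Lemma~\ref{reparam}), show that $\nabla_P\Psi(x,\cdot)$ is Lipschitz on this set (the paper's Lemma~\ref{lem:objlipschitzgrad}, which uses Cramer's rule and a lower bound on $|\det A_d(P)|$ where you use a singular-value bound --- an equivalent argument), and then invoke the nonconvex Frank--Wolfe gap rate. The only cosmetic difference is that you rederive the $\mathcal O(1/\sqrt{M})$ telescoping bound from the descent lemma, whereas the paper simply cites Theorem~1 of \citet{lacoste2016convergence}.
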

The proof of Theorem~\ref{thm:global:convergence:FW} leverages convergence results for generic Frank-Wolfe algorithms by~\citet{lacoste2016convergence}.
A detailed proof is relegated to Appendix~\ref{app:sec:theoretical:guarantees}. \\  
\hspace*{0mm} \tikzstyle{block} = [rectangle, draw, fill=gray!20,
    text width=16em, text centered, rounded corners, minimum height=3em]
\tikzstyle{line} = [draw, very thick, color=black!50, -latex']
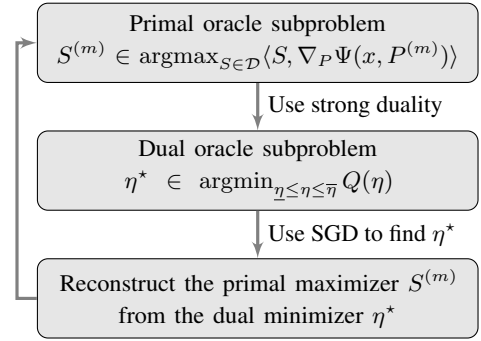
\begin{figure}
\centering
\begin{tikzpicture}[scale=0.64, node distance = 1.7cm, auto]
    \node [block] (init) {\footnotesize Primal oracle subproblem \\$\sxt^{(m)} \in \argmax_{S \in {\mc D}} \langle S, \nabla_{P} \Psi(x,\nuxt^{(m)})\rangle$};
    \node [block, below of=init] (dual) {\footnotesize  Dual oracle subproblem\\ $\eta^{\star}\in\argmin_{\underline{\eta}\le\eta\le\overline\eta} Q(\eta)$};
    \node [block, below of=dual] (sgd) {\footnotesize  Reconstruct the primal maximizer $\sxt^{(m)}$\\ from the dual minimizer $\eta^{\star}$};
    \path [line] (init) --node [, color=black]  {\footnotesize  Use strong duality}(dual);
    \path [line] (dual) -- node [, color=black] {\footnotesize  Use SGD to find $\eta^{\star}$} (sgd);
    \path [line] (sgd) --++  (-5,0) |- (init);
\end{tikzpicture}
\caption{\small Flow diagram of Algorithm~\ref{alg:subprob}.} 
\label{fig:motifAlg2}\vspace{-2pt}
\end{figure}
We now explain the implementation of Algorithm~\ref{alg:FW}.
Note that the only nontrivial step is the solution of the oracle subproblem in line~$5$. To construct this subproblem we need the gradient $\nabla_P \Psi(x,P)$, which can be computed in closed form. 
Full details are provided in Appendix~\ref{app:sec:theoretical:guarantees}. Even though the oracle subproblem is convex by construction (see Proposition~\ref{prop:D:convex}), it is still computationally expensive because it involves $d^2$ decision variables $P\in\mathbb R^{d\times d}$ subject to $\mathcal O(d)$ linear constraints as well as a single conditional relative entropy constraint. Algorithm~\ref{alg:subprob} thus solves the {\em dual} oracle subproblem, which involves only $d$ decision variables $\eta\in\mathbb R^d$ subject to box constraints. Strong duality holds because~$P_{\theta'}$ represents a Slater point for the primal problem (see Proposition~\ref{prop:linear:predictor}). The dual oracle subproblem is amenable to efficient numerical solution via Stochastic Gradient Descent (SGD), and a primal maximizer can be recovered from any dual minimizer by using the problem's first order optimality conditions (see Proposition~\ref{prop:linear:predictor}). The high-level structure of Algorithm~\ref{alg:subprob} is visualized in Figure~\ref{fig:motifAlg2}.

To construct the dual oracle subproblem, fix a decision~$x\in X$ and an anchor point~$\nuxt^{(m)}\in\mc P$, and set~$C = \nabla_{P} \Psi(x,\nuxt^{(m)})$. In addition, define $\underline{\eta}_{i} = \max_j\{C_{ij}\}$ and
\begin{equation*}
\overline{\eta}_i \!=\! \frac{1}{1- e^{-r}}\!\left(\!d \max _{i, j} \{C_{i j}\}\! - e^{-r}\tr {C^\top P_{\theta'}}\right) - \sum_{k\ne i} \underline{\eta}_{k}.
\end{equation*}
The dual subproblem minimizes $Q(\eta) =  \sum_{i=1}^d Q_i(\eta)$ over the box~$[\underline \eta, \overline \eta]$, where $Q_i(\eta)=\eta_i-\lambda^\star(\eta)/d$ 
and
\begin{equation*}
    \lambda^\star(\eta)= \exp \left(\sum_{i,j=1}^{d} (\pi_{\theta'})_i (P_{\theta'})_{ij} \log \left(\frac{\eta_i-C_{ij}}{(\pi_{\theta'})_i }\right)-r\right)
\end{equation*}
is a convex function. As~$Q(\eta)$ is reminiscent of an empirical average, the dual oracle subproblem lends itself to efficient numerical solution via SGD, which obviates costly high-dimensional gradient calculations. 
In addition, the partial derivatives of $Q_i(\eta)$ are readily available in closed form as
\begin{equation*}
\frac{\partial}{\partial \eta_i} Q_i(\eta^{(n)})=d-(\pi_{\theta'})_i \sum_{j=1}^d \frac{(P_{\theta^\prime})_{ij}}{\eta^{(n)}_i-C_{ij}}.
\end{equation*}
We set the SGD learning rate to~$\ell =K N^{-1/2}$ for some constant~$K>0$, which ensures that the suboptimality of the iterates generated by the SGD algorithm decays as $\mc O(1/\sqrt{N})$ after $N$ iterations; see~\citep[Section~2.2]{nemirovski09Robust}. A summary of Algorithm~\ref{alg:subprob} in pseudocode is given below.

\begin{algorithm}[H]
 \caption{Solution of the oracle subproblem 
 }\label{alg:subprob}
\begin{algorithmic}[1]
  \STATE {\bfseries Input:} $ \underline{\eta}, \ \overline{\eta}, \ \ell, P_{\theta'},\  N$
  \STATE {\bfseries Output:} $\sxt^{(N)} \in \arg\min_{S \in {\mc D}} \inprod{S}{-\nabla_{P} \Psi(x,\nuxt^{(m)})}$
  \STATE Initialize $\eta^{(0)}$ with $\underline{\eta}_{i} \leqslant \eta^{(0)}_i \leqslant \overline{\eta}_i,   i=1,\ldots,d$
  \FOR{$n=0,\ldots , N$}
  \FOR{$i=1,\ldots, d$}
  \STATE $y^{(n)}_i=\eta^{(n)}_i-\ell \frac{\partial}{\partial \eta_i} Q_i(\eta^{(n)})$ 
  \ENDFOR
  \STATE $\eta^{(n+1)}\in\argmin_{\underline{\eta}_{i} \leqslant s_i \leqslant \overline{\eta}_i, i=1,\ldots,d}\|s-y^{(n)}\|^2_2$ 
  \ENDFOR
  \STATE $\eta^\star=\eta^{N+1}$, $\lambda^\star=\lambda^\star(\eta^\star)$
  \STATE $(\sxt^{(N)})_{d(i-1)+j}= (\pi_{\theta'})_i (P_{\theta^\prime})_{ij} \lambda^\star/(\eta^\star_i-\tilde{c}^{(i)}_{j})$ for $i,j=1,\ldots,d$
\end{algorithmic}
\end{algorithm}
\begin{mythm}[{Convergence of  Algorithm~\ref{alg:subprob}}]\label{thm:correct:algsub}
	For any fixed $\varepsilon>0$, Algorithm~\ref{alg:subprob} outputs an $\varepsilon$-suboptimal solution of the primal oracle subproblem
	in $\mc O(1/\varepsilon^2)$ iterations. 
\end{mythm}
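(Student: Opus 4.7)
The plan is to combine three ingredients: strong duality between the primal and dual oracle subproblems (Proposition~\ref{prop:linear:predictor}), the standard $\mc O(1/\sqrt{N})$ convergence rate for projected stochastic gradient descent on a convex function with bounded stochastic gradients over a compact convex set~\cite{nemirovski09Robust}, and a reconstruction argument that transfers dual suboptimality into primal suboptimality of the same order.

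The first step is to invoke Proposition~\ref{prop:linear:predictor}, whose hypotheses hold because $P_{\theta'}$ is a Slater point of the primal oracle subproblem: the conditional relative entropy vanishes at $P = P_{\theta'}$ and hence lies strictly inside the constraint $\Dc{\theta'}{P}\leqslant r$ for any $r>0$. Strong duality then identifies the primal optimum $\max_{S \in \mc D} \langle S, \nabla_P \Psi(x, \nuxt^{(m)})\rangle$ with the dual optimum $\min_{\eta \in [\underline \eta, \overline \eta]} Q(\eta)$, and the first-order optimality conditions produce the explicit reconstruction rule of line~11. The second step is to verify that the SGD machinery applies to the dual: $Q$ is convex on the compact box $[\underline \eta, \overline \eta]$ and the partial derivatives in line~6 are uniformly bounded there, provided $\eta$ is kept strictly above the singular lower boundary $\eta_i = \max_j C_{ij}$ (which is exactly how $\underline \eta$ is chosen). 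The third step is to apply the analysis of~\cite[Section~2.2]{nemirovski09Robust} with the prescribed stepsize $\ell = K N^{-1/2}$: the averaged iterate attains expected dual suboptimality $\mc O(1/\sqrt N)$, so $N = \mc O(1/\varepsilon^2)$ iterations suffice to reach dual suboptimality $\varepsilon$. The fourth step is the primal transfer: by weak duality $\langle S(\eta), C\rangle \leqslant Q(\eta)$ for the reconstructed $S(\eta)$, and combining this with strong duality yields primal suboptimality $\mc O(\varepsilon)$.

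The main obstacle I anticipate is twofold. First, the partial derivatives $\partial Q_i/\partial \eta_i$ involve $1/(\eta_i - C_{ij})$, which becomes singular at the lower boundary of the box; either the iterates must be shown to remain strictly interior with a uniform gap, or expected gradient bounds must be established so that the second-moment condition required by the Nemirovski et al.\ analysis applies. Second, Algorithm~\ref{alg:subprob} as written feeds the last iterate $\eta^{(N+1)}$ into the reconstruction rather than the running average produced by the standard projected-SGD analysis; either the theorem should be read with the running average in place of the last iterate, or stronger structural properties of $Q$ (such as local strong convexity away from the boundary) must be invoked to bound the last iterate directly. A mild additional check is that the reconstructed $S(\eta)$ is primal-feasible for $\mc D$, which follows from the KKT conditions whenever $\eta$ is strictly interior to the box.
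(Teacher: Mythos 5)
Your proposal follows exactly the paper's route: the paper's entire proof of Theorem~\ref{thm:correct:algsub} is a single sentence citing the convergence results of \citet{nemirovski09Robust} applied to the dual reformulation~\eqref{eq:dual-partial-minimum}, with strong duality and primal recovery supplied by Proposition~\ref{prop:linear:predictor}, which is precisely the three-ingredient structure you describe. The obstacles you flag --- the gradient singularity at $\underline\eta_i=\max_j C_{ij}$ (so the stochastic gradients are not uniformly bounded on the closed box), the output of the last iterate $\eta^{(N+1)}$ rather than the running average covered by the standard analysis, and the fact that the reconstructed $S(\eta)$ for a merely $\varepsilon$-suboptimal $\eta$ is only approximately row-stochastic and hence not exactly primal feasible (so the transfer is not literally weak duality) --- are all genuine and are left entirely unaddressed by the paper's one-line proof, so your version is, if anything, more careful than the original.
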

Theorem~\ref{thm:correct:algsub} follows from standard convergence results in~\citep[Section~2.2]{nemirovski09Robust} applied to the dual oracle subproblem. We finally remark that the stepsize~$\gamma_m$ to be computed in line~\ref{line:linesearch} of Algorithm~\ref{alg:FW} can be obtained via a direct line search method.

\begin{myrmk}[Overall complexity of Algorithm~\ref{alg:FW}]
Denote by $\varepsilon_1$ the error of of the Frank-Wolfe algorithm (Algorithm~\ref{alg:FW}) provided by Theorem~\ref{thm:global:convergence:FW}, and by $\varepsilon_2$ the error of the subroutine (Algorithm~\ref{alg:subprob}) provided by Theorem~\ref{thm:correct:algsub}. That is, suppose we solve the Frank-Wolfe subproblem approximately in the sense that $ \langle S^{(m)}, \nabla_{P} \Psi(x, P^{(m)})\rangle\ge \max_{S \in \mathcal{D}}\langle S, \nabla_{P} \Psi(x, P^{(m)})\rangle-\kappa \varepsilon_2,$ for some constant $\kappa>0$. We denote by $C_f$ the curvature constant of the function $\Psi$. By \cite{lacoste2016convergence}, after running $\mathcal O((1+2\kappa\varepsilon_2/C_f)/\varepsilon_1^2)$ many iterations of Algorithm~\ref{alg:FW} we achieve an accumulated error below $\varepsilon_1$. As Algorithm~\ref{alg:FW} calls Algorithm~\ref{alg:subprob} in total $\mathcal O(1/\varepsilon_1^2)$ times, the overall complexity coincides with the number of operations needed to solve $\mathcal O(1/\varepsilon_1^2)$ times a $d$-dimensional minimization problems of a smooth convex function over a compact box.
This significantly reduces the computational cost that would be needed to solve in each step the original $d^2$-dimensional nonconvex problem~\eqref{def:DRO} directly. A further reduction of complexity is possible by applying a randomized Frank-Wolfe algorithm, as suggested by~\citet{reddi2016stochastic}.
\end{myrmk}

Given an efficient computational method to solve the predictor problem~\eqref{def:DRO}, we now address the solution of the prescriptor problem~\eqref{def:DRO:decision}.
Since our Frank-Wolfe algorithm only enables us to access the values of the objective function~$\widehat{c}_{r}(x, \theta^{\prime})$ for fixed values of~$x\in X$ and~$\theta'\in\Theta'$, we cannot use gradient-based algorithms to optimize over~$x$. Nevertheless, we can resort to derivative-free optimization (DFO) approaches, which can find local or global minima under appropriate regularity conditions on~$\widehat{c}_{r}(x, \theta^{\prime})$.

Besides popular heuristic algorithms such as simulated annealing~\cite{metropolis1953equation} or genetic algorithms~\cite{holland1992adaptation}, there are two major classes of DFO methods: direct search and model-based methods. 
The direct search methods use only comparisons between function evaluations to generate new candidate points, while the model-based methods construct a smooth surrogate objective function. 
We focus here on direct search methods due to their solid worst-case convergence guarantees. Descriptions of these algorithms and convergence proofs can be found in the recent textbook by~\citet{audet2017derivative}. For a survey of model-based DFO see~\cite{conn2009introduction}. Off-the-shelf software to solve constrained DFO problems includes \texttt{fminsearch} in Matlab or \texttt{NOMAD} in C++; see~\cite{rios2013derivative} for an extensive list of solvers.

When applied to the extreme barrier function formulation of the prescriptor problem~\eqref{def:DRO:decision}, which is obtained by ignoring the constraints and adding a high penalty to each infeasible decision~\cite{audet2006mesh}, the directional direct-search method by~\citet{vicente2013worst} offers an attractive worst-case convergence guarantee thanks to~\citep[Theorem 2 \& Corollary 1]{vicente2013worst}. The exact algorithm that we use to solve~\eqref{def:DRO:decision} is outlined in Algorithm~\ref{alg:prescri} in Appendix~\ref{sec:dfo-algorithm}.
\begin{mythm}[Convergence of Algorithm~\ref{alg:prescri}~\cite{vicente2013worst}]
	Fix any $\theta'\in\Theta'$ and $\varepsilon>0$. If $ \nabla_x \widehat{c}_{r}(x,\theta') $ is Lipschitz continuous in $x$, then Algorithm~\ref{alg:prescri} outputs a solution~$x_N$ with $\|\nabla_x \widehat{c}_{r}(x_N,\theta^{\prime})\|\le \varepsilon$ after $N\sim \mathcal{O}( \varepsilon^{-2})$ iterations. \\If $ \widehat{c}_{r}(x,\theta') $ is also convex in~$x$, then we further have
\begin{align*}\vspace{-10pt}
    \|\widehat{c}_{r}(x_N,\theta^{\prime})-\min_{x\in X} \widehat{c}_{r}(x,\theta^{\prime})\|\le \varepsilon.
\end{align*}
\end{mythm}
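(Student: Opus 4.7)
The plan is to reduce the statement to the worst-case complexity results of~\citet{vicente2013worst} for directional direct-search methods applied to smooth nonconvex (and convex) objectives. Since the theorem is explicitly attributed to that reference, the proof amounts to verifying that Algorithm~\ref{alg:prescri} instantiates the scheme studied there and that our problem meets the required hypotheses.

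First, I would recall that Algorithm~\ref{alg:prescri} is the directional direct-search method of~\citet{vicente2013worst} applied to the extreme barrier reformulation $\widehat{c}_r^{\text{EB}}(x,\theta') = \widehat{c}_r(x,\theta')$ for $x\in X$ and $+\infty$ otherwise. This converts the constrained problem~\eqref{def:DRO:decision} into a form the direct-search framework can handle without modification. I would then verify the three standing assumptions of~\citep[Theorem~2]{vicente2013worst}: (i) $\widehat{c}_r(\cdot,\theta')$ is bounded below on $X$, which follows from the continuity of $L$ in $x$ together with the compactness of $X$ and of the feasible set in~\eqref{def:DRO}; (ii) $\nabla_x \widehat{c}_r(\cdot,\theta')$ is Lipschitz, which is a hypothesis of the theorem; (iii) the polling directions prescribed in Algorithm~\ref{alg:prescri} form positive spanning sets with a uniformly bounded cosine measure, which holds by construction of the algorithm. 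Given these conditions, \citep[Theorem~2]{vicente2013worst} directly yields the $\mathcal{O}(\varepsilon^{-2})$ iteration complexity for reaching a point $x_N$ with $\|\nabla_x \widehat{c}_r(x_N,\theta')\| \leq \varepsilon$, establishing the first assertion.

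For the second assertion, under the additional convexity hypothesis, the statement follows from~\citep[Corollary~1]{vicente2013worst}. Equivalently, one can argue directly: for a convex $f$ on the compact convex set $X$ with minimizer $x^\star$ and diameter $D$, the inequality $f(x_N) - f(x^\star) \leq \langle \nabla f(x_N), x_N - x^\star\rangle \leq \|\nabla f(x_N)\|\, D$ combined with running the first part at tolerance $\varepsilon/D$ produces the desired bound (at the cost of a constant factor absorbed into the $\mathcal{O}$ notation). The main obstacle, were we attempting a fully self-contained argument, would be to verify independently that $\widehat{c}_r(\cdot,\theta')$ admits a Lipschitz gradient; this requires an envelope-type analysis of the parametric maximization in~\eqref{def:DRO}, but it is sidestepped here because smoothness is explicitly assumed. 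With smoothness granted, the proof is a clean specialization of the results in~\cite{vicente2013worst}.
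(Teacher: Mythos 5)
Your proposal is correct and follows essentially the same route as the paper, which provides no separate proof but simply attributes the result to \citep[Theorem~2 \& Corollary~1]{vicente2013worst} after noting that Algorithm~\ref{alg:prescri} is that directional direct-search method applied to the extreme barrier formulation of~\eqref{def:DRO:decision}. Your additional verification of the standing assumptions (boundedness below, Lipschitz gradient by hypothesis, positive spanning sets with bounded cosine measure) and the explicit convexity argument via $f(x_N)-f(x^\star)\le \|\nabla f(x_N)\|\,D$ are consistent with, and slightly more detailed than, what the paper records.
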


\section{An Optimal Hypothesis Test}
\label{sect:hypo:test}
We now develop a Neyman-Pearson-type hypothesis test for determining which one of two prescribed Markov chains has generated a given sequence~$\xi_1,\hdots,\xi_T$ of training samples. Specifically, 
we assume that these training samples were either generated by the Markov chain corresponding to the doublet probability mass function~$\theta^{(1)}\in\Theta$ or by the one corresponding to~$\theta^{(2)}\in\Theta$, that is, we have $\theta\in\{\theta^{(1)}, \theta^{(2)}\}$. Next, we construct a decision rule encoded by the open set 
\begin{equation}\label{eq:HT:set:A}
    \mathcal{B} = \{ \theta'\in\Theta' : \Dc{\theta'}{\theta^{(1)}}<\Dc{\theta'}{\theta^{(2)}} \},
\end{equation}
which predicts~$\theta=\theta^{(1)}$ if $\widehat\theta_T\in\mathcal{B}$ and predicts~$\theta=\theta^{(2)}$ otherwise.
The quality of any such decision rule is conveniently measured by the type I and type II error probabilities
\begin{align}\label{eq:HT:error:probabilities}
    \alpha_T = \mathbb{P}_{\theta^{(1)}}(\widehat\theta_T\notin \mathcal{B}) \quad \text{and}\quad
    \beta_T = \mathbb{P}_{\theta^{(2)}}(\widehat\theta_T\in \mathcal{B})
\end{align}
for~$T\in\mathbb N$, respectively. Specifically, $\alpha_T$ represents the probability that the proposed decision rule wrongly predicts~$\theta=\theta^{(2)}$ if the training samples were generated by~$\theta^{(1)}$, and $\beta_T$ represents the probability that the decision rule wrongly predicts~$\theta=\theta^{(1)}$ if the data was generated by~$\theta^{(2)}$. 

\begin{myprop}[Decay of the error probabilities] \label{eq:prop:HT}
If $\theta^\star \in \arg\min_{\theta'\not\in\mathcal B} \Dc{\theta'}{\theta^{(1)}}$ and $r=\Dc{\theta^\star}{\theta^{(1)}}>0$, then
\begin{align*}
    &\lim_{T\to\infty}\frac{1}{T} \log \alpha_T = -r \quad \text{and} \quad
    \lim_{T\to\infty}\frac{1}{T} \log \beta_T = -r.
\end{align*}
\end{myprop}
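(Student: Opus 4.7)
The natural plan is to invoke the LDP from Lemma~\ref{Lem:LDP} once under~$\mb P_{\theta^{(1)}}$ with the closed set~$\mathcal{B}^c$ and once under~$\mb P_{\theta^{(2)}}$ with the open set~$\mathcal{B}$, and then to show that the resulting LDP upper and lower bounds both collapse to~$-r$. Continuity of~$\theta'\mapsto \Dc{\theta'}{\theta^{(i)}}$ on~$\Theta'$ (which holds because~$\theta^{(i)}\in\Theta$ is strictly positive) guarantees that~$\mathcal{B}$ is indeed open. Moreover, $r>0$ forces~$\theta^{(1)}\neq \theta^{(2)}$, whence $\theta^{(1)}\in\mathcal{B}$ and $\theta^{(2)}$ lies in~$\interior \mathcal{B}^c$ with strict inequalities.

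The main technical ingredient, which I would state as a lemma, is the symmetry
\[
\inf_{\theta'\in\mathcal{B}^c}\Dc{\theta'}{\theta^{(1)}} \;=\; \inf_{\theta'\in\cl\mathcal{B}}\Dc{\theta'}{\theta^{(2)}} \;=\; r.
\]
The first equality is what makes both decay rates equal. I would argue it by a KKT/boundary-attainment argument: since the unique unconstrained minimizer of~$\Dc{\cdot}{\theta^{(1)}}$ is~$\theta^{(1)}\in\mathcal{B}$, the minimizer~$\theta^\star$ over~$\mathcal{B}^c$ is attained on~$\partial\mathcal{B}$, so $\Dc{\theta^\star}{\theta^{(1)}}=\Dc{\theta^\star}{\theta^{(2)}}=r$. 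Symmetrically, since~$\theta^{(2)}\notin\cl\mathcal{B}$ is the unconstrained minimizer of~$\Dc{\cdot}{\theta^{(2)}}$, any minimizer of~$\Dc{\cdot}{\theta^{(2)}}$ on~$\cl\mathcal{B}$ also lies on~$\partial\mathcal{B}$. As~$\Dc{\cdot}{\theta^{(1)}}$ and~$\Dc{\cdot}{\theta^{(2)}}$ agree on~$\partial\mathcal{B}$, the two infima coincide. With this in hand, the LDP upper bound~\eqref{eq:ldp_exponential_rates_ub:old} directly gives $\limsup_T T^{-1}\log\alpha_T\le -r$ and $\limsup_T T^{-1}\log\beta_T\le -r$.

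For the matching lower bounds I would use the LDP lower bound~\eqref{eq:ldp_exponential_rates_lb:old}, which requires showing $\inf_{\interior \mathcal{B}^c}\Dc{\cdot}{\theta^{(1)}}\le r$ and $\inf_{\mathcal{B}}\Dc{\cdot}{\theta^{(2)}}\le r$. For both inequalities I would approximate~$\theta^\star\in\partial\mathcal{B}$ by interior points of the respective sets while preserving the rate-function value by continuity. Concretely, the sequence $\theta^\star+\varepsilon(\theta^{(1)}-\theta^\star)$ lies in~$\mathcal{B}$ for all small $\varepsilon>0$ because the directional derivative of~$g(\theta')=\Dc{\theta'}{\theta^{(1)}}-\Dc{\theta'}{\theta^{(2)}}$ at~$\theta^\star$ along $\theta^{(1)}-\theta^\star$ is strictly negative; this follows from the KKT condition $\nabla\Dc{\theta^\star}{\theta^{(1)}}=\lambda\,\nabla g(\theta^\star)$ with $\lambda>0$, combined with $\nabla\Dc{\theta^\star}{\theta^{(1)}}\cdot(\theta^{(1)}-\theta^\star)<0$, which in turn follows from $\theta^{(1)}$ being the unique zero of~$\Dc{\cdot}{\theta^{(1)}}$. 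A symmetric perturbation along~$\theta^{(2)}-\theta^\star$ produces a sequence in~$\interior\mathcal{B}^c$ with~$\Dc{\cdot}{\theta^{(1)}}$ values tending to~$r$.

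The main obstacle, as foreshadowed above, is bridging the gap between the open and closed formulations of the LDP: the upper bound naturally involves the infimum over the closed set, whereas the lower bound involves the infimum over its interior, and the minimizer~$\theta^\star$ necessarily sits on the shared boundary~$\partial\mathcal{B}$. Handling this requires a careful local argument at~$\theta^\star$, which is where the KKT/directional-derivative analysis enters and, in particular, where one must exclude the pathological case that~$\theta^\star$ is a strict local maximum of~$g$. The hypothesis $r>0$ (hence $\theta^{(1)}\neq\theta^{(2)}$) ensures this pathology does not occur, since it forces $\nabla g(\theta^\star)\neq 0$.
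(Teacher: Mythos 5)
Your proposal is correct and follows essentially the same route as the paper's proof: apply the LDP upper and lower bounds of Lemma~\ref{Lem:LDP} to~$\mathcal{B}^c$ under~$\mb P_{\theta^{(1)}}$ and to~$\mathcal{B}$ under~$\mb P_{\theta^{(2)}}$, and identify both rates with~$r$ via the minimizer~$\theta^\star$, which sits on the decision boundary where $\Dc{\theta^\star}{\theta^{(1)}}=\Dc{\theta^\star}{\theta^{(2)}}=r$. The only difference is one of emphasis: where the paper equates the infima over the open and closed versions of each set with a one-line appeal to continuity, you make the boundary-attainment and interior-approximation of~$\theta^\star$ explicit (via convexity/KKT and a perturbation toward~$\theta^{(1)}$ resp.~$\theta^{(2)}$), which is a legitimate and indeed slightly more careful treatment of the same step.
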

Proposition~\ref{eq:prop:HT} ensures that the proposed decision rule is powerful in the sense that both error probabilities decay exponentially with the sample size~$T$ at some rate~$r$ that can at least principally be computed.
The next theorem is inspired by the celebrated Neyman-Pearson lemma \citep[Theorem~11.7]{ref:Cover-06} and shows that this decision rule is actually optimal in a precise statistical sense.

\begin{mythm}[Optimality of the hypothesis test] \label{thm:optimality:HT}
Let $\bar\alpha_T$ and $\bar\beta_T$ represent the type~I and type~II error probabilities of the decision rule obtained by replacing~$\mathcal B$ with any other open set~$\bar{\mathcal{B}}\subseteq\Theta'$. 
If $\lim_{T\to\infty}\frac{1}{T}\log\bar\alpha_T \leq \lim_{T\to\infty}\frac{1}{T}\log\alpha_T$, then $\lim_{T\to\infty}\frac{1}{T}\log\bar\beta_T\geq \lim_{T\to\infty}\frac{1}{T}\log\beta_T$.
\end{mythm}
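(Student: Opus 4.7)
The plan is to argue by contradiction. Assuming all limits exist as stated in the theorem, suppose $\lim_T \tfrac{1}{T}\log \bar\beta_T = -r' < -r$; I will exhibit an auxiliary point whose placement in $\Theta'$ simultaneously violates both the hypothesis on $\bar\alpha_T$ and the contradiction assumption on $\bar\beta_T$. The argument rests on two ingredients: the LDP in Lemma~\ref{Lem:LDP}, which converts exponential decay rates into infima of $\mathsf{D}_c(\cdot \| \theta^{(i)})$ over subsets of $\Theta'$, and the \emph{convexity} of $\mathsf{D}_c(\cdot \| \theta)$ in its first argument. This latter convexity follows from the log-sum inequality, which yields that the conditional entropy $H(Y|X)$ is concave in the joint distribution, so $\mathsf{D}_c(\theta' \| \theta) = -H(Y|X)(\theta') + \text{linear}(\theta')$ is convex in $\theta'$; this is compatible with the nonconvexity in the second argument noted in Remark~\ref{rmk:nonconvexity}.

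First, I would translate both rate assumptions into infimum inequalities. Applying Lemma~\ref{Lem:LDP} to $\bar\alpha_T = \mathbb{P}_{\theta^{(1)}}(\widehat\theta_T \in \Theta' \setminus \bar{\mathcal{B}})$ and noting $\mathsf{int}(\Theta' \setminus \bar{\mathcal{B}}) = \Theta' \setminus \cl\bar{\mathcal{B}}$ (since $\bar{\mathcal{B}}$ is open), the LDP lower bound combined with the hypothesis $\lim_T \tfrac{1}{T}\log\bar\alpha_T \leq -r$ yields
\[
\inf_{\theta' \in \Theta' \setminus \cl \bar{\mathcal{B}}} \mathsf{D}_c(\theta' \| \theta^{(1)}) \geq r. \qquad (\star)
\]
Likewise, applying the LDP lower bound to $\bar\beta_T = \mathbb{P}_{\theta^{(2)}}(\widehat\theta_T \in \bar{\mathcal{B}})$ with the contradiction assumption gives
\[
\inf_{\theta' \in \bar{\mathcal{B}}} \mathsf{D}_c(\theta' \| \theta^{(2)}) \geq r'. \qquad (\star\star)
\]

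Next, I would construct the contradiction point. Set $\theta_\varepsilon = (1-\varepsilon)\theta^* + \varepsilon \theta^{(1)}$ for small $\varepsilon \in (0,1)$; since $\theta^{(1)} \in \Theta \subseteq \mathbb{R}^{d\times d}_{++}$, the matrix $\theta_\varepsilon$ has strictly positive entries. From $\theta^* \notin \mathcal{B}$ we know $\mathsf{D}_c(\theta^* \| \theta^{(2)}) \leq \mathsf{D}_c(\theta^* \| \theta^{(1)}) = r$. Convexity of $\mathsf{D}_c(\cdot \| \theta)$ in the first argument then gives
\[
\mathsf{D}_c(\theta_\varepsilon \| \theta^{(1)}) \leq (1-\varepsilon)r + \varepsilon\cdot 0 = (1-\varepsilon) r < r,
\]
and, choosing $\varepsilon$ small enough (which is possible because $r < r'$),
\[
\mathsf{D}_c(\theta_\varepsilon \| \theta^{(2)}) \leq (1-\varepsilon)r + \varepsilon\, \mathsf{D}_c(\theta^{(1)} \| \theta^{(2)}) < r'.
\]

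Finally, I would derive the contradiction by a dichotomy on the location of $\theta_\varepsilon$. If $\theta_\varepsilon \in \Theta' \setminus \cl \bar{\mathcal{B}}$, then $(\star)$ is contradicted by $\mathsf{D}_c(\theta_\varepsilon \| \theta^{(1)}) < r$. Otherwise $\theta_\varepsilon \in \cl \bar{\mathcal{B}}$, and I may pick a sequence $\theta_n \in \bar{\mathcal{B}}$ with $\theta_n \to \theta_\varepsilon$. Because $\theta_\varepsilon$ has strictly positive entries and $\theta^{(2)} \in \Theta$ has strictly positive entries, $\mathsf{D}_c(\cdot \| \theta^{(2)})$ is continuous at $\theta_\varepsilon$, so $\mathsf{D}_c(\theta_n \| \theta^{(2)}) \to \mathsf{D}_c(\theta_\varepsilon \| \theta^{(2)}) < r'$, furnishing points of $\bar{\mathcal{B}}$ that violate $(\star\star)$. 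The main obstacle is establishing the convexity of $\mathsf{D}_c(\cdot \| \theta)$ in its first argument (a standard consequence of the log-sum inequality, but nontrivial to spot because the same functional is nonconvex in the second argument); once this is in hand, the remainder of the argument is topological bookkeeping.
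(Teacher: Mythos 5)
Your proof is correct and follows essentially the same route as the paper's: both argue by contradiction, use the LDP of Lemma~\ref{Lem:LDP} to translate the assumed decay rates into the infimum bounds on $\Dc{\cdot}{\theta^{(1)}}$ over $\Theta'\setminus\cl\bar{\mathcal{B}}$ and on $\Dc{\cdot}{\theta^{(2)}}$ over $\bar{\mathcal{B}}$, and then contradict one of these at the perturbed point $(1-\varepsilon)\theta^\star+\varepsilon\theta^{(1)}$ via convexity of the conditional relative entropy in its first argument (Proposition~\ref{prop:properties:conditional:entropy}\ref{convex1prop}). The only difference is organizational: the paper first deduces $\theta^\star\notin\cl\bar{\mathcal{B}}$ and perturbs within the open complement, whereas you run a dichotomy on whether the perturbed point lies in $\cl\bar{\mathcal{B}}$; both variants are sound.
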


Theorem~\ref{thm:optimality:HT} implies that the proposed hypothesis test is Pareto optimal in the following sense. If any test with the same test statistic~$\widehat\theta_T$ but a different set~$\bar{\mathcal B}$ has a faster decaying type~I error probability, then it must necessarily have a slower decaying type~II error probability. In Appendix~\ref{section:MCoin:example} we construct two different Markov chains that are almost indistinguishable on the basis of a finite training dataset. Indeed, we will show that the (optimal) decay rates of the type~I and type~II error probabilities can be arbitrarily small.


\section{Revenue Maximization under a Markovian Brand Switching Model}

We now test the performance of our approach in the context of a revenue maximization model, which aims to recognize and exploit repeat-buying and brand-switching behavior. This model assumes that the probability of a customer buying a particular brand depends on the brand purchased last~\cite{herniter1961customer, chintagunta1991InvestigatingHeterogeneityBrand, gensler2007EvaluatingChannelPerformance,  leeflang2015ModelingMarkets}. Adopting the perspective of a medium-sized retailer, we then formulate an optimization problem that seeks to maximize profit from sales by anticipating the long-term average demand of each brand based on the brand transition probabilities between different customer segments characterized by distinct demographic attributes. For instance, younger people might prefer relatively new brands, while senior people might prefer more established brands. Once such insights are available, the retailer selects the brands to put on offer with the goal to maximize long-run average revenue. We assume that the brand purchasing behavior of the customers is exogenous, which is realistic unless the retailer is a monopolist or has at least significant market power. Therefore, the retailer's ordering decisions have no impact on the behavior of the customers. The ergodicity of the brand choice dynamics of each customer segment can be justified by the ergodicity of demographic characteristics~\cite{ezzati1974ForecastingMarketShares}.

\subsection{Problem Formulation}
Assume that there are $d$ different brands of a particular good, and denote by $a\in\Re^d$ the vector of retail prices per unit of the good for each brand. The seller needs to decide which brands to put on offer. This decision is encoded by a binary vector~$x\in\{0,1\}^d$ with~$x_j=1$ if brand~$j$ is offered and~$x_j=0$ otherwise. To quantify the seller's revenue, we need to specify the demand for each brand. To this end, assume that the customers are clustered into~$n$ groups with similar demographic characteristics~\cite{kuo2002integration}. The percentage of customers in group~$k$ is denoted by~$w_k\in\R_+$, and the aggregate brand preferences~$\xi^{k}_t$ of the customers in group~$k$ and period~$t$ represent an ergodic Markov chain on $\Xi=\{1,\ldots,d\}$ with stationary distribution~$\pi_{\theta^{k}}$. Thus, the long-run average cost per customer and time period is
\begin{equation*}
\begin{aligned} 
c(x,\theta)&=\lim _{T \rightarrow \infty} -\frac{1}{T}\Bigg(\sum_{t=0}^{T-1} \sum_{j=1}^{d}a_j x_j \sum_{k=1}^n w_k \indic{\xi_t^{k}=j}\Bigg) \\
&=\sum_{k=1}^{n} w_k \sum_{j=1}^{d} -a_j x_{j} \left(\pi_{\theta^{k}}\right)_{j}=\sum_{k=1}^{n} w_k c^k(x,\theta^k),
\end{aligned}
\end{equation*}
where $c^k(x,\theta^k)= \sum_{j=1}^{d} -a_j x_{j} (\pi_{\theta^{k}})_{j}$ and $\theta=(\theta^k)_{k=1}^n$. Here, the second equality exploits the ergodic theorem \citep[Theorem~4.1]{ross2010introduction}. It is easy to show that~$c^k(x,\theta^k)$ can be expressed as an expected loss with respect to a probability measure encoded by~$\theta^k$, much like the objective function of problem~\eqref{eq:SP}. 
Finally, we define~$X=\{x\in\{0,1\}^d:Cx\leq b\}$ for some $C\in\R^{m\times d}$ and $b\in\R^m$. The linear inequalities~$Cx\leq b$ may capture budget constraints or brand exclusivity restrictions etc.
The cost minimization problem $\min_{x\in X} c(x, \theta)$ can thus be viewed as an instance of~\eqref{eq:SP}.
If $\theta$ is unknown and only $T$ training samples are available, we construct the empirical doublet distributions~$\widehat \theta_T^k$ for all~$k=1,\ldots, n$ and solve the DRO problem 
\begin{align}\label{eq:ex:Markov:DRO}
	\min _{x\in X}~
	\sum_{k=1}^n w_k\widehat c_r^{\,k}(x,\widehat\theta^{\,k}_T),
\end{align}
where
\begin{align}\label{eq:ex:Markov:sub}
&\widehat c_r^{\,k}(x,\widehat\theta^{\,k}_T) =\max _{\theta^{k} \in \cl\Theta}\Big\{ c^k(x,\theta^k) : \Dc{\widehat{\theta}_{T}^{\,k}}{ \theta^{k}} \leq r\Big\}.
\end{align}
Below we denote by~$\widehat c_T(x,\widehat \theta_T)$ the objective function and by~$\widehat{x}_{r}(\widehat{\theta}_{T})$ a minimizer of~\eqref{eq:ex:Markov:DRO}, where $\widehat \theta_T=(\widehat\theta_T^k)_{k=1}^n$.

\subsection{Numerical Experiments}
We now validate the out-of-sample guarantees of the proposed Markov DRO approach experimentally when the observable data is indeed generated by ergodic Markov chains and compare our method against three state-of-the-art approaches to data-driven decision making from the literature. The first baseline is the Wasserstein DRO approach by~\citet{ref:Mannor-20}, which replaces the worst-case expectation problem~\eqref{eq:ex:Markov:sub} for customer group $k$ with 
\begin{equation*}
  \widehat c_r^{\,k}(x,\widehat\theta^{\,k}_T) = \max_{P\in \mc D_r(\widehat{\theta}_{T}^{\,k})} \Psi (x,P),  
\end{equation*}
where the reparametrized objective function $\Psi(x,P)$ is defined as in Lemma~\ref{reparam}, and the ambiguity set is defined as
\[
    \textstyle \mc D_r(\widehat{\theta}_{T}^{\,k}) = \left\{ P \in{\mc P}: \mathsf{d_W}((P_{\widehat{\theta}_{T}^{\,k}})_{i\cdot},P_{i\cdot})\leqslant r \ \forall i\right\}.
\]
Here, $\mathsf{d_W}$ denotes the $1$-Wasserstein distance, where the transportation cost between two states~$i,j\in\Xi$ is set to~$|i-j|$. We also compare our method against the classical SAA approach~\cite{ref:Shapiro-14} and the DRO approach by~\citet{ref:vanParys:fromdata-17}, which replaces the conditional relative entropy in the ambiguity set~\eqref{eq:ex:Markov:sub} with the ordinary relative entropy ({\em i.e.}, the Kullback-Leibler divergence). We stress that both of these approaches were originally designed for serially independent training samples.
For small~$T$ it is likely that $\widehat \theta_T\not>0$, in which case Algorithm~\ref{alg:FW} may not converge; see Theorem~\ref{thm:global:convergence:FW}. In this case, we slightly perturb and renormalize~$\widehat\theta_T$ to ensure that~$\widehat \theta_T>0$ and~$\widehat \theta_T\in\Theta'$.

\textbf{Synthetic data.} In the first experiment we solve a random instance of the revenue maximization problem with~$n=5$ customer groups and~$d=10$ brands, where the weight vector~$w$ and the price vector~$a$ are sampled from the uniform distributions on~$\Delta_n$ and~$\{1,\ldots,10\}^d$, respectively. To construct the transition probability matrix of the Markov chain reflecting the brand switching behavior of any group~$k$, we first sample a random matrix from the uniform distribution on~$[0,1]^{d\times d}$, increase two random elements of this matrix to~$4$ and~$5$, respectively, and normalize all rows to render the matrix stochastic. As the data-generating Markov chains in this synthetic experiment are known, the true out-of-sample risk of any data-driven decision can be computed exactly.\footnote{The Matlab code for reproducing all results is available from \url{https://github.com/mkvdro/DRO_Markov}.} 
\begin{figure}[h!]
    \centering
    \scalebox{0.55}{\input{outperf_w}}   
    \caption{\small Out-of-sample risk 
    of different data-driven decisions based on synthetic data. Solid lines and shaded areas represent empirical averages and empirical 90\% confidence intervals, respectively, computed using $100$ independent training datasets.}
    \label{fig:synth:out}\vspace{10pt}
     \centering
    \scalebox{0.55}{
%
%
\begin{tikzpicture}

\begin{groupplot}[group style = {group size = 3 by 1, horizontal sep = 30pt}, width=1.5in,height=1.15in]

\nextgroupplot[
title={$T=10$},
scale only axis,
xmode=log,
xmin=0.0001,
xmax=10,
xminorticks=true,
ymin=-0.05,
ymax=1.1,
xlabel={$r$},
axis background/.style={fill=white},
axis x line*=bottom,
axis y line*=left,
legend style = { column sep = 10pt, legend columns = -1, legend to name = grouplegend,}
]
\addplot [smooth,color=blue, line width=2.0pt]
  table[row sep=crcr]{%
0.0001	0.07\\
0.000359381366380463	0.07\\
0.00129154966501488	0.07\\
0.00464158883361278	0.07\\
0.0166810053720006	0.07\\
0.0599484250318941	0.07\\
0.215443469003188	0.07\\
0.774263682681127	0.07\\
2.78255940220713	0.05\\
10	0.07\\
};\addlegendentry{Markov DRO}

\addplot [smooth,color=cyan, dashed,line width=2.0pt]
  table[row sep=crcr]{%
0.0001	0.9\\
0.000359381366380463	0.9\\
0.00129154966501488	0.9\\
0.00464158883361278	0.86\\
0.0166810053720006	0.82\\
0.0599484250318941	0.9\\
0.215443469003188	0.46\\
0.774263682681127	0.0667\\        
2.78255940220713	0.0333\\
10	0.0667\\
};\addlegendentry{Wasserstein}

\addplot [color=red, dotted,line width=2.0pt]
  table[row sep=crcr]{%
0.0001	1\\
0.000359381366380463	1\\
0.00129154966501488	1\\
0.00464158883361278	1\\
0.0166810053720006	1\\
0.0599484250318941	1\\
0.215443469003188	1\\
0.774263682681127	0.38\\
2.78255940220713	0\\
10	0\\
};\addlegendentry{i.i.d.~DRO}

\addplot [smooth,color=orange, line width=2.0pt]
  table[row sep=crcr]{%
0.0001	0.96\\
0.000359381366380463	0.96\\
0.00129154966501488	0.96\\
0.00464158883361278	0.96\\
0.0166810053720006	0.96\\
0.0599484250318941	0.96\\
0.215443469003188	0.96\\
0.774263682681127	0.96\\
2.78255940220713	0.96\\
10	0.96\\
};\addlegendentry{SAA}

\nextgroupplot[
title={$T=300$},
scale only axis,
xmode=log,
xmin=0.0001,
xmax=10,
xminorticks=true,
ymin=-0.05,
ymax=1.1,
xlabel={$r$},
axis background/.style={fill=white},
axis x line*=bottom,
axis y line*=left,
]

\addplot [smooth,color=red, dotted,line width=2.0pt]
  table[row sep=crcr]{%
0.0001	0.81\\
0.000359381366380463	0.69\\
0.00129154966501488	0.4\\
0.00464158883361278	0.1\\
0.0166810053720006	0\\
0.0599484250318941	0\\
0.215443469003188	0\\
0.774263682681127	0\\
2.78255940220713	0\\
10	0\\
};
\addplot [smooth,color=blue, line width=2.0pt]
  table[row sep=crcr]{%
0.0001	0\\
0.000359381366380463	0\\
0.00129154966501488	0\\
0.00464158883361278	0\\
0.0166810053720006	0\\
0.0599484250318941	0\\
0.215443469003188	0.01\\
0.774263682681127	0\\
2.78255940220713	0.01\\
10	0.1\\
};

\addplot [smooth,color=cyan, dashed,line width=2.0pt]
  table[row sep=crcr]{%
0.0001	0\\
0.000359381366380463	0\\
0.00129154966501488	0\\
0.00464158883361278	0\\
0.0166810053720006	0\\
0.0599484250318941	0.02\\
0.215443469003188	0.4\\
0.774263682681127	0\\
2.78255940220713	0\\
10	0.05\\
};

\addplot [smooth,color=orange, line width=2.0pt]
  table[row sep=crcr]{%
0.0001	0.87\\
0.000359381366380463	0.87\\
0.00129154966501488	0.87\\
0.00464158883361278	0.87\\
0.0166810053720006	0.87\\
0.0599484250318941	0.87\\
0.215443469003188	0.87\\
0.774263682681127	0.87\\
2.78255940220713	0.87\\
10	0.87\\
};

\nextgroupplot[
title={$T=500$},
scale only axis,
xmode=log,
xmin=0.0001,
xmax=10,
xminorticks=true,
ymin=-0.05,
ymax=1.1,
xlabel={$r$},
axis background/.style={fill=white},
axis x line*=bottom,
axis y line*=left,
]              
   \addplot [smooth,color=red, dotted,line width=2.0pt]
  table[row sep=crcr]{%
0.0001	0.74\\
0.000359381366380463	0.6\\
0.00129154966501488	0.27\\
0.00464158883361278	0.01\\
0.0166810053720006	0\\
0.0599484250318941	0\\
0.215443469003188	0\\
0.774263682681127	0\\
2.78255940220713	0\\
10	0\\
};
\addplot [smooth,color=blue,line width=2.0pt]
  table[row sep=crcr]{%
0.0001	0\\
0.000359381366380463	0\\
0.00129154966501488	0\\
0.00464158883361278	0\\
0.0166810053720006	0\\
0.0599484250318941	0\\
0.215443469003188	0\\
0.774263682681127	0.03\\
2.78255940220713	0.02\\
10	0.06\\
};
\addplot [smooth,color=cyan, dashed,line width=2.0pt]
  table[row sep=crcr]{%
0.0001	0\\
0.000359381366380463	0\\
0.00129154966501488	0\\
0.00464158883361278	0\\
0.0166810053720006	0\\
0.0599484250318941	0.0\\
0.215443469003188	0.2\\
0.774263682681127	0.1\\
2.78255940220713	0\\
10	0.05\\
};
\addplot [smooth,color=orange, line width=2.0pt]
  table[row sep=crcr]{%
0.0001	0.85\\
0.000359381366380463	0.85\\
0.00129154966501488	0.85\\
0.00464158883361278	0.85\\
0.0166810053720006	0.85\\
0.0599484250318941	0.85\\
0.215443469003188	0.85\\
0.774263682681127	0.85\\
2.78255940220713	0.85\\
10	0.85\\
};

\end{groupplot}
\node at ($(group c2r1) + (0,-2.8cm)$) {\ref{grouplegend}}; 
\end{tikzpicture}
    \caption{\small Empirical out-of-sample disappointment 
    of different data-driven decisions based on synthetic data, computed using 100 independent training datasets.}
    \label{fig:synth:re}
\end{figure}


\textbf{Real-world data.} The second experiment is based on a marketing dataset from Kaggle,\footnote{\url{https://www.kaggle.com/khalidnasereddin/retail-dataset-analysis}} which tracks the purchasing behavior of $2{,}000$ customers with respect to $d=5$ brands of chocolates. The customers are clustered into $n=5$ groups based on their age, education level and income by using the $K$-means++ clustering algorithm by~\citet{vassilvitskii2006k}. The resulting clusters readily induce a weight vector~$w$. The price vector~$a$ is sampled randomly from the uniform distribution on~$\{1,\ldots,10\}^d$. In addition, we concatenate the purchase histories of all customers in any group~$k$ and interpret the resulting time series as a trajectory of the unknown Markov chain corresponding to group~$k$.

\begin{figure}[h!]
    \centering
    \scalebox{0.55}{
%
%
\begin{tikzpicture}

\begin{groupplot}[group style = {group size = 3 by 1, horizontal sep = 30pt}, width=1.5in,height=1.15in]

\nextgroupplot[
title={$T=10$},
scale only axis,
xmode=log,
xmin=0.0001,
xmax=10,
xminorticks=true,
ymin=-7,
ymax=-5,
xlabel={$r$},
axis background/.style={fill=white},
axis x line*=bottom,
axis y line*=left,
]

\addplot [smooth,color=blue, mark=square, line width=2.0pt]
  table[row sep=crcr]{%
0.0001	-6.30613620685897\\
0.000359381366380463	-6.30613620685897\\
0.00129154966501488	-6.30613620685897\\
0.00464158883361278	-6.30613620685897\\
0.0166810053720006	-6.30613620685897\\
0.0599484250318941	-6.30613620685897\\
0.215443469003188	-6.30613620685897\\
0.774263682681127	-6.30613620685897\\
2.78255940220713	-6.30613620685897\\
10	-6.30613620685897\\
};

\addplot [color=red, mark=o,line width=2.0pt]
  table[row sep=crcr]{%
0.0001	-5.45356586804639\\
0.000359381366380463	-5.45356586804639\\
0.00129154966501488	-5.45356586804639\\
0.00464158883361278	-5.45356586804639\\
0.0166810053720006	-5.45356586804639\\
0.0599484250318941	-5.45356586804639\\
0.215443469003188	-5.45356586804639\\
0.774263682681127	-5.45356586804639\\
2.78255940220713	-5.45356586804639\\
10	-5.45356586804639\\
};

\addplot [smooth,color=orange, mark=triangle,line width=2.0pt]
  table[row sep=crcr]{%
0.0001	-5.45356586804639\\
0.000359381366380463	-5.45356586804639\\
0.00129154966501488	-5.45356586804639\\
0.00464158883361278	-5.45356586804639\\
0.0166810053720006	-5.45356586804639\\
0.0599484250318941	-5.45356586804639\\
0.215443469003188	-5.45356586804639\\
0.774263682681127	-5.45356586804639\\
2.78255940220713	-5.45356586804639\\
10	-5.45356586804639\\
};

\addplot [color=cyan, mark=star, line width=2.0pt]
  table[row sep=crcr]{%
0.0001	-6.30613620685897\\
0.000359381366380463	-6.30613620685897\\
0.00129154966501488	-6.30613620685897\\
0.00464158883361278	-6.30613620685897\\
0.0166810053720006	-6.30613620685897\\
0.0599484250318941	-6.30613620685897\\
0.215443469003188	-6.30613620685897\\
0.774263682681127	-6.30613620685897\\
2.78255940220713	-6.30613620685897\\
10	-5.59779127008488\\
};

\nextgroupplot[
title={$T=300$},
scale only axis,
xmode=log,
xmin=0.0001,
xmax=10,
xminorticks=true,
ymin=-7,
ymax=-5,
xlabel={$r$},
axis background/.style={fill=white},
axis x line*=bottom,
axis y line*=left,
legend style = { column sep = 7pt, legend columns = -1, legend to name = grouplegend,}
]
\addplot [color=blue, mark=square, line width=2.0pt]
  table[row sep=crcr]{%
0.0001	-6.30613620685897\\
0.000359381366380463	-6.30613620685897\\
0.00129154966501488	-6.30613620685897\\
0.00464158883361278	-6.30613620685897\\
0.0166810053720006	-6.30613620685897\\
0.0599484250318941	-6.30613620685897\\
0.215443469003188	-6.30613620685897\\
0.774263682681127	-6.30613620685897\\
2.78255940220713	-5.96869633890966\\
10	-6.30613620685897\\
};\addlegendentry{Markov DRO}
\addplot [smooth,color=cyan, mark=star, line width=2.0pt]
  table[row sep=crcr]{%
0.0001	-6.30613620685897\\
0.000359381366380463	-6.30613620685897\\
0.00129154966501488	-6.30613620685897\\
0.00464158883361278	-6.30613620685897\\
0.0166810053720006	-6.30613620685897\\
0.0599484250318941	-6.30613620685897\\
0.215443469003188	-6.30613620685897\\
0.774263682681127	-5.90988749320716\\
2.78255940220713	-6.30613620685897\\
10	-5.59779127008488\\
};\addlegendentry{Wasserstein}

\addplot [smooth,color=red, mark=o,line width=2.0pt]
  table[row sep=crcr]{%
0.0001	-5.45356586804639\\
0.000359381366380463	-5.45356586804639\\
0.00129154966501488	-5.45356586804639\\
0.00464158883361278	-5.45356586804639\\
0.0166810053720006	-5.45356586804639\\
0.0599484250318941	-5.45356586804639\\
0.215443469003188	-5.45356586804639\\
0.774263682681127	-5.45356586804639\\
2.78255940220713	-5.45356586804639\\
10	-5.45356586804639\\
};\addlegendentry{i.i.d.~DRO}

\addplot [smooth,color=orange, mark=triangle,line width=2.0pt]
  table[row sep=crcr]{%
0.0001	-5.45356586804639\\
0.000359381366380463	-5.45356586804639\\
0.00129154966501488	-5.45356586804639\\
0.00464158883361278	-5.45356586804639\\
0.0166810053720006	-5.45356586804639\\
0.0599484250318941	-5.45356586804639\\
0.215443469003188	-5.45356586804639\\
0.774263682681127	-5.45356586804639\\
2.78255940220713	-5.45356586804639\\
10	-5.45356586804639\\
};\addlegendentry{SAA}

\nextgroupplot[
title={$T=500$},
scale only axis,
xmode=log,
xmin=0.0001,
xmax=10,
xminorticks=true,
ymin=-7,
ymax=-5,
xlabel={$r$},
axis background/.style={fill=white},
axis x line*=bottom,
axis y line*=left,
]              
    
\addplot [smooth,color=blue, mark=square, line width=2.0pt]
table[row sep=crcr]{%
0.0001	-6.30613620685897\\
0.000359381366380463	-6.30613620685897\\
0.00129154966501488	-6.30613620685897\\
0.00464158883361278	-6.30613620685897\\
0.0166810053720006	-6.30613620685897\\
0.0599484250318941	-6.30613620685897\\
0.215443469003188	-6.30613620685897\\
0.774263682681127	-6.30613620685897\\
2.78255940220713	-6.30613620685897\\
10	-6.30613620685897\\
};
\addplot [smooth,color=cyan, mark=star, line width=2.0pt]
  table[row sep=crcr]{%
0.0001	-6.30613620685897\\
0.000359381366380463	-6.30613620685897\\
0.00129154966501488	-6.30613620685897\\
0.00464158883361278	-6.30613620685897\\
0.0166810053720006	-6.30613620685897\\
0.0599484250318941	-6.30613620685897\\
0.215443469003188	-6.30613620685897\\
0.774263682681127	-5.90988749320716\\
2.78255940220713	-6.30613620685897\\
10	-5.59779127008488\\
};
\addplot [color=red, mark=o,line width=2.0pt]
table[row sep=crcr]{%
0.0001	-5.45356586804639\\
0.000359381366380463	-5.45356586804639\\
0.00129154966501488	-5.45356586804639\\
0.00464158883361278	-5.45356586804639\\
0.0166810053720006	-5.45356586804639\\
0.0599484250318941	-5.45356586804639\\
0.215443469003188	-5.45356586804639\\
0.774263682681127	-5.45356586804639\\
2.78255940220713	-5.45356586804639\\
10	-5.45356586804639\\
};

\addplot [smooth,color=orange, mark=triangle,line width=2.0pt]
  table[row sep=crcr]{%
0.0001	-5.45356586804639\\
0.000359381366380463	-5.45356586804639\\
0.00129154966501488	-5.45356586804639\\
0.00464158883361278	-5.45356586804639\\
0.0166810053720006	-5.45356586804639\\
0.0599484250318941	-5.45356586804639\\
0.215443469003188	-5.45356586804639\\
0.774263682681127	-5.45356586804639\\
2.78255940220713	-5.45356586804639\\
10	-5.45356586804639\\
};
\end{groupplot}
\node at ($(group c2r1) + (0,-2.8cm)$) {\ref{grouplegend}}; 
\end{tikzpicture}
    \caption{\small Out-of-sample risk 
    of different data-driven decisions based on 200 test samples from the Kaggle dataset.}
    \label{fig:real:out}
\end{figure}
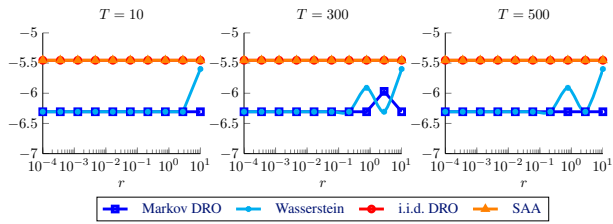
\textbf{Results.} Figures~\ref{fig:synth:out} and~\ref{fig:real:out} show that the proposed Markov DRO method outperforms the SAA scheme and the DRO approach by~\citet{ref:vanParys:fromdata-17} tailored to i.i.d.\ data (denoted as `i.i.d.~DRO') in that its out-of-sample risk displays a smaller mean as well as a smaller variance. For small training sample sizes~$T$, our method is also superior to the Wasserstein DRO approach by~\citet{ref:Mannor-20}, but for~$T\gtrsim 300$ the two methods display a similar behavior. 
Note also that the mean as well as the variance of the out-of-sample risk increase with~$r$ for all three DRO approaches. This can be explained by the increasing inaccuracy of a model that becomes more and more pessimistic.
In fact, if no transitions are observed from a certain brand A to another brand B given that the data follows an ergodic Markov process, then brand B's true long-term value is overestimated when the data is falsely treated as i.i.d. This phenomenon explains the discrepancy between the out-of-sample risk incurred by methods that treat the data as Markovian or as i.i.d., respectively; see Figure~\ref{fig:real:out}. Note also that if the prescribed decay rate~$r$ drops to~$0$, then all methods should collapse to the SAA approach
if $\theta'>0$. If $\theta'$ has zero entries, however, the worst-case expected risk with respect to a conditional relative entropy ambiguity set does not necessarily reduce to the empirical risk if $r$ tends to $0$. We shed more light on this phenomenon in Remark~\ref{rmk:thetap:0}. 
Figure~\ref{fig:synth:re} further shows that the Markov DRO approach results in the smallest out-of-sample disappointment among all tested methods for a wide range of rate parameters~$r$. Comparing the charts for~$T=10$, $T=300$ and~$T=500$, we also see that the out-of-sample disappointment of the Markov DRO method decays with~$T$, which is consistent with Theorem~\ref{thm:DRO:guarantees}. In practice, the optimal choice of the decay rate~$r$ remains an open question. 
To tune~$r$ with the aim to trade off in-sample performance against out-of-sample disappointment, one could use the rolling window heuristic for model selection in time series models by~\citet{bergmeir2012use}.

\textbf{Scalability.}
We also compare the scalability of the proposed Frank-Wolfe algorithm against that of an interior point method by \citet{waltz2006interior} (with or without exact gradient information), which represents a state-of-the-art method in nonlinear optimization. 
To this end, we solve $10$ instances of the worst-case expectation problem~\eqref{eq:ex:Markov:sub} with rate parameter~$r=1$ for a fixed decision~$x$ sampled from the uniform distribution on~$\{0,1\}^d$. The 10 instances involve independent training sets of size~$T=5{,}000$, each of which is sampled from the same fixed Markov chain constructed as in the experiments with synthetic data. The problem instances are modelled in MATLAB, and all experiments are run on an Intel~i5-5257U CPU~(2.7GHz) computer with 16GB RAM. Figure~\ref{fig:wall:clock:time} shows that our Frank-Wolfe algorithm is significantly faster than both baseline methods whenever the Markov chain accommodates at least $d=100$ states. In particular, note that the interior point methods run out of memory as soon as~$d$ exceeds $200$. \\

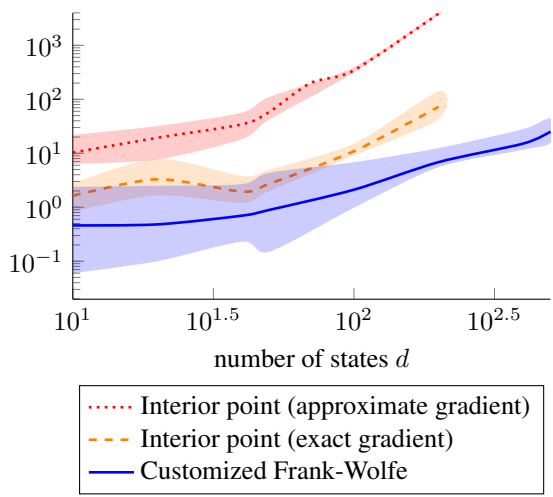
\begin{figure}[htb] 
   \begin{tikzpicture}

\begin{axis}[%
width=2.5in,
height=1.5in,
at={(-0,0)},
scale only axis,
xmode=log,
ymode=log,
xmin=10,
xmax=500,
xminorticks=true,
ymin=0,
ymax=4000,
xlabel={number of states $d$},
axis background/.style={fill=white},
axis x line*=bottom,
axis y line*=left,
legend cell align={left},
legend style={at={(0.5,-0.3)},anchor=north}
]
\addplot [smooth,color=red,dotted, line width=1]
  table[row sep=crcr]{%
10  10.294\\
20  19.4\\
40  34.512\\
50  58.407\\
70  200\\
100 345.374\\
200 3869.1\\
};
\addlegendentry{Interior point (approximate gradient)};

\addplot[smooth,draw=none, fill=red, fill opacity=0.2,forget plot]
table[row sep=crcr] {%
x	y\\
10	6.37\\
20  7.67\\
40  18.34\\
50  42.13\\
100 313.79\\
200 3646.4\\
200 4108.1\\
100 395.8\\
50  110.87\\
40  57.06\\
20  32.04\\
10	21.9\\
}--cycle;

\addplot [smooth,color=orange,dashed, line width=1]
  table[row sep=crcr]{%
10  1.628\\
20  3.301\\
40  1.953\\
50  2.72\\
100 10.931\\
200 72.745\\
};
\addlegendentry{Interior point (exact gradient)};

\addplot[smooth,draw=none, fill=orange, fill opacity=0.2,forget plot]
table[row sep=crcr] {%
x	y\\
10	0.8300\\
20  1.6600\\
40  1.2000\\
50  2\\
100 8.9700\\
200 42.9600\\
200 140.5800\\
100 15.1800\\
50  4.9800\\
40  3.8100\\
20  7.9000\\
10	2.7900\\
}--cycle;

\addplot [smooth,color=blue,solid, line width=1]
  table[row sep=crcr]{%
10  0.461\\
20  0.481\\
40  0.699\\
50  0.892\\
100 2.113\\
200 6.858\\
400 15.115\\
500 25.227\\
};
\addlegendentry{Customized Frank-Wolfe};
\addplot[smooth,draw=none, fill=blue, fill opacity=0.2,forget plot]
table[row sep=crcr] {%
x	y\\
10	0.06\\
20  0.1\\
40  0.2300\\
50  0.1500\\
100 0.9800\\
200 5.5300\\
400 11.5800\\
500 15.6500\\
500 44.1300\\
400 28.0900\\
200 12.4700\\
100 6.8400\\
50  4.3700\\
40  2.6900\\
20  2.5\\
10	2.39\\
}--cycle;
\end{axis}
\end{tikzpicture}%
    \caption{Runtimes (in seconds) of different methods for solving problem~\eqref{def:DRO}. Lines represent means and shaded areas represent the ranges between the smallest and the largest runtimes.}
    \label{fig:wall:clock:time} 
\end{figure}

\textbf{Acknowledgements.} We thank Bart Van Parys for valuable comments on the paper and for suggesting the Markov coin example in Appendix~\ref{section:MCoin:example}. This research was supported by the Swiss National Science Foundation under the NCCR Automation, grant agreement~51NF40\_180545.

\bibliography{ref.bib}
\bibliographystyle{icml2021}

\onecolumn
\newpage
\appendix
\paragraph*{Additional notation.}
The $(i,j)$-th minor of a square matrix $A$ is defined as the determinant of a $A$ without its $i$-th row and $j$-th column. Similarly, the $(i\ell, jk)$-th minor of $A$ is defined as the determinant of $A$ without its $i$-th and $\ell$-th rows and without its $j$-th and $k$-th columns. The vector of all ones is denoted as~$\mathbf{1}$. Its dimension will always be clear from the context.

\section{Auxiliary Results for  Section~\ref{sec:problem:statement}} \label{app:sec:markov:chains}

\begin{myprop}[Properties of the conditional relative entropy] \label{prop:properties:conditional:entropy}
The conditional relative entropy $\Dc{\theta'}{\theta}$ introduced in Definition~\ref{def:conditional_relative_entropy} has the following properties.
\begin{enumerate}[label = (\roman*)]\setlength\itemsep{-0em}
\item If $\theta'\in\Theta'$ and $\theta \in \cl \Theta$, then $\Dc{\theta'}{\theta} \geqslant 0$. \label{nonneg}
\item If $\theta'\in\Theta$ and $\theta \in \cl \Theta$, then $\Dc{\theta'}{\theta}=0$ if and only if $\theta'=\theta$. \label{indiscernibles}
\item $\Dc{\theta'}{\theta}$ is convex in $\theta'$ for every fixed $\theta\in\Theta$. \label{convex1prop}
\item $\Dc{\theta'}{\theta}$ is jointly continuous in $(\theta',\theta)$ on $\Theta'\times\Theta$. \label{contprop}
\end{enumerate}
\end{myprop}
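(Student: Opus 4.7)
My overall plan is to prove the four properties in sequence, leveraging the joint-distribution form
\[
  \Dc{\theta'}{\theta}=\sum_{i,j=1}^d \theta'_{ij}\log\!\frac{\theta'_{ij}/(\pi_{\theta'})_i}{\theta_{ij}/(\pi_{\theta})_i}
\]
from Definition~\ref{def:conditional_relative_entropy}, together with the equivalent weighted-KL form used in the first line of that definition. These two representations, combined with the log-sum inequality, handle everything.

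\textbf{Claim (i)} (non-negativity). I would use the weighted-KL form $\Dc{\theta'}{\theta}=\sum_{i=1}^d (\pi_{\theta'})_i \D{(P_{\theta'})_{i\cdot}}{(P_\theta)_{i\cdot}}$. Whenever $(\pi_{\theta'})_i>0$ the row vector $(P_{\theta'})_{i\cdot}=\theta'_{i\cdot}/(\pi_{\theta'})_i$ is a bona fide probability vector, and Gibbs' inequality (equivalently the log-sum inequality) gives $\D{(P_{\theta'})_{i\cdot}}{(P_\theta)_{i\cdot}}\geqslant 0$; when $(\pi_{\theta'})_i=0$ we have $\theta'_{i\cdot}=0$ and the $i$-th summand in the joint-distribution form is $0$ under the convention $0\log(0/q)=0$. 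Entries where $\theta_{ij}=0$ but $\theta'_{ij}>0$ make the expression $+\infty$, which is still non-negative.

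\textbf{Claim (ii)} (identity of indiscernibles). Since $\theta'\in\Theta$, $\pi_{\theta'}>0$ strictly, so $\Dc{\theta'}{\theta}=0$ forces $\D{(P_{\theta'})_{i\cdot}}{(P_\theta)_{i\cdot}}=0$ for every $i$. The equality condition in Gibbs then yields $(P_{\theta'})_{i\cdot}=(P_\theta)_{i\cdot}$ for all $i$, i.e.\ $P_\theta=P_{\theta'}$. Because $\theta'>0$, the matrix $P_{\theta'}$ is strictly positive, hence defines an irreducible aperiodic chain with a unique invariant distribution; this unique invariant is $\pi_{\theta'}$. Any $\theta\in\cl\Theta$ with $P_\theta=P_{\theta'}$ has $\pi_\theta$ satisfying $\pi_\theta P_{\theta'}=\pi_\theta$, so $\pi_\theta=\pi_{\theta'}$, and therefore $\theta_{ij}=(\pi_\theta)_i(P_\theta)_{ij}=(\pi_{\theta'})_i(P_{\theta'})_{ij}=\theta'_{ij}$. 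The converse is immediate from the joint-distribution formula.

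\textbf{Claim (iii)} (convexity in $\theta'$). I would split the joint-distribution formula into
\[
 \Dc{\theta'}{\theta}=\sum_{i,j=1}^d \theta'_{ij}\log\!\frac{\theta'_{ij}}{(\pi_{\theta'})_i}\;-\;\sum_{i,j=1}^d \theta'_{ij}\log\!\frac{\theta_{ij}}{(\pi_{\theta})_i}.
\]
The second sum is linear in $\theta'$ for fixed $\theta$. For each $(i,j)$, the map $(x,y)\mapsto x\log(x/y)$ on $\R_+\times\R_+$ is jointly convex (this is the standard perspective/log-sum inequality fact), and both $\theta'_{ij}$ and $(\pi_{\theta'})_i=\sum_k\theta'_{ik}$ are linear in $\theta'$. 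Hence each summand of the first sum is convex in $\theta'$, and the total is a sum of convex functions plus a linear function, which is convex. The main subtlety here, and in my opinion the only non-trivial step in the whole proposition, is to recognise that the apparent ``concave'' contribution $H(\pi_{\theta'})$ is absorbed into the jointly-convex $x\log(x/y)$ perspective rather than standing on its own.

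\textbf{Claim (iv)} (joint continuity on $\Theta'\times\Theta$). On $\Theta$, $\theta$ is strictly positive, so $(\pi_\theta)_i>0$ and $\log(\theta_{ij}/(\pi_\theta)_i)$ is jointly continuous in $\theta$, whence the bilinear second sum is jointly continuous in $(\theta',\theta)$. For the first sum I would use the bound $|x\log(x/y)|\leqslant y/e$ for $0\leqslant x\leqslant y$ (since $t\log(1/t)\leqslant 1/e$ for $t\in(0,1]$): this shows that the summand $\theta'_{ij}\log(\theta'_{ij}/(\pi_{\theta'})_i)$ is uniformly bounded by $(\pi_{\theta'})_i/e$ and hence vanishes as $(\pi_{\theta'})_i\to 0$, extending the function continuously to the boundary of $\Theta'$ under the convention $0\log(0/0)=0$; on the relatively open subset where $(\pi_{\theta'})_i>0$ the expression is manifestly continuous. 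Combining the two sums gives joint continuity, completing the proof.
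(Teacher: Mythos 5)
Your proposal is correct and follows essentially the same route as the paper's proof: non-negativity and the equality case via the standard properties of relative entropy plus uniqueness of the stationary distribution of a strictly positive transition matrix, convexity via the perspective of $-\log$, and continuity via a term-by-term decomposition. The only difference is cosmetic—your claim (iv) justifies continuity at the boundary of $\Theta'$ explicitly with the bound $\abs{x\log(x/y)}\leqslant y/e$, where the paper simply appeals to the logarithm conventions.
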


\begin{proof} 
{The non-negativity of the conditional relative entropy $\Dc{\theta'}{\theta} =\sum_{i\in\Xi} (\pi_{\theta'})_i \, \D{(P_{\theta'})_{i\cdot}}{(P_\theta)_{i\cdot}}$ follows directly from the non-negativity of the relative entropy \citep[Theorem~2.6.3]{ref:Cover-06}, and thus Assertion~\ref{nonneg} follows. 

Next, fix any $\theta'\in\Theta$ and~$\theta\in\cl\Theta$, and note that $(\pi_{\theta'})_i>0$ for every~$i\in\Xi$. As the relative entropy vanishes if and only if its arguments coincide, we have~$\Dc{\theta'}{\theta}=0$ if and only if~$P_\theta=P_{\theta'}$. 
Finally, as~$P_{\theta}\in\R^{d\times d}_{++}$ induces a unique stationary distribution~$\pi_{\theta}\in\R^d_{++}$ for every~$\theta'\in\Theta$, we find that~$\Dc{\theta'}{\theta}=0$ if and only if~$\theta={\theta'}$. Thus, Assertion~\ref{indiscernibles} follows.

As for \ref{convex1prop}, note that $t\log(t/x)=-t\log(x/t)$ is jointly convex in~$x>0$ and~$t>0$ as it represents the perspective of the convex function~$-\log(x)$. As convexity is preserved under affine transformations of the inputs, we may conclude that~$\theta'_{ij} \log( \theta'_{ij}/\sum_{k\in\Xi} \theta'_{ik})$ is convex in~$\theta'\in\Theta$. Recall that $\theta'_{ij}>0$ and $\sum_{k\in\Xi} \theta'_{ik}>0$ for all~$\theta'\in\Theta$. As these statements are true for all~$i,j,k\in\Xi$ and as convexity is preserved under sums, the claim follows.

To prove Assertion \ref{contprop}, note that we can rewrite the conditional relative entropy as
\begin{equation}
	\Dc{\theta'}{\theta} =\sum_{i,j=1}^d \theta'_{ij} \log\left( \frac{\theta'_{ij}}{\theta_{ij}} \right) -  \sum_{i,j=1}^d\theta'_{ij} \log\left( \sum_{k\in\Xi} \theta'_{ik}\right)+\sum_{i,j=1}^d\theta'_{ij} \log\left(\sum_{k=1}^d \theta_{ik}\right) ,
\end{equation}
where all terms are manifestly continuous in $(\theta',\theta)$ on $\Theta'\times\Theta$ thanks to our standard conventions for the logarithm.
}
\end{proof}

The following example highlights that, perhaps surprisingly, Assertion~\ref{indiscernibles} cannot be extended to arbitrary $\theta'\in\Theta'$.

\begin{myrmk}[Identity of indiscernibles]\label{rmk:thetap:0}
Assume that~$d=2$, and select any $\theta'\in\Theta'$ with $\theta'_{12}=\theta'_{21}=0$ and~$\theta'_{11}, \theta'_{22}>0$. These assumptions imply that~$\theta'\notin\Theta$. The conditional relative entropy distance between $\theta'$ and any $\theta\in\cl\Theta$ amounts to
\begin{align*}
    \Dc{\theta'}{\theta}
    =\theta'_{11}\log\left(\frac{\theta_{11}+\theta_{12}}{\theta_{11}}\right)+\theta'_{22}\log\left(\frac{\theta_{21}+\theta_{22}}{\theta_{22}}\right).
\end{align*}
Thus, we have $\Dc{\theta'}{\theta}=0$ whenever
$\theta_{12}=\theta_{21}=0$ and $\theta_{11}+\theta_{22}=1$ with~$\theta_{11},\theta_{22}>0$. The conditional relative entropy ambiguity set~$\{\theta\in\cl\Theta:\Dc{\theta'}{\theta}\leq r\}$ used in~\eqref{eq:DRO:general} thus contains infinitely many models even if~$r=0$, and it is misleading to picture this ambiguity set as a ball in~$\R^{d\times d}$. Moreover, if~$\theta'\notin\Theta$, then the worst-case expectation~\eqref{eq:DRO:general} does not reduce to the empirical risk~$\sum_{i,j=1}^d L(x,i)\theta'_{ij}$. This is in stark contrast to ambiguity sets commonly used for i.i.d.~data. Figure~\ref{fig:contour} visualizes a projection of the conditional relative entropy ambiguity set into the~$(\theta_{11},\theta_{22})$-plane for~$\theta'=\frac{1}{2} I$ and different radii~$r$ and shows that this set collapses to the line segment~$\{(\theta_{11},\theta_{22})\in\R^2_+:\theta_{11}+\theta_{22}=1\}$ as $r$ drops to~$0$.

\begin{figure}[h!]
\centering
        \includegraphics[width=0.22\linewidth]{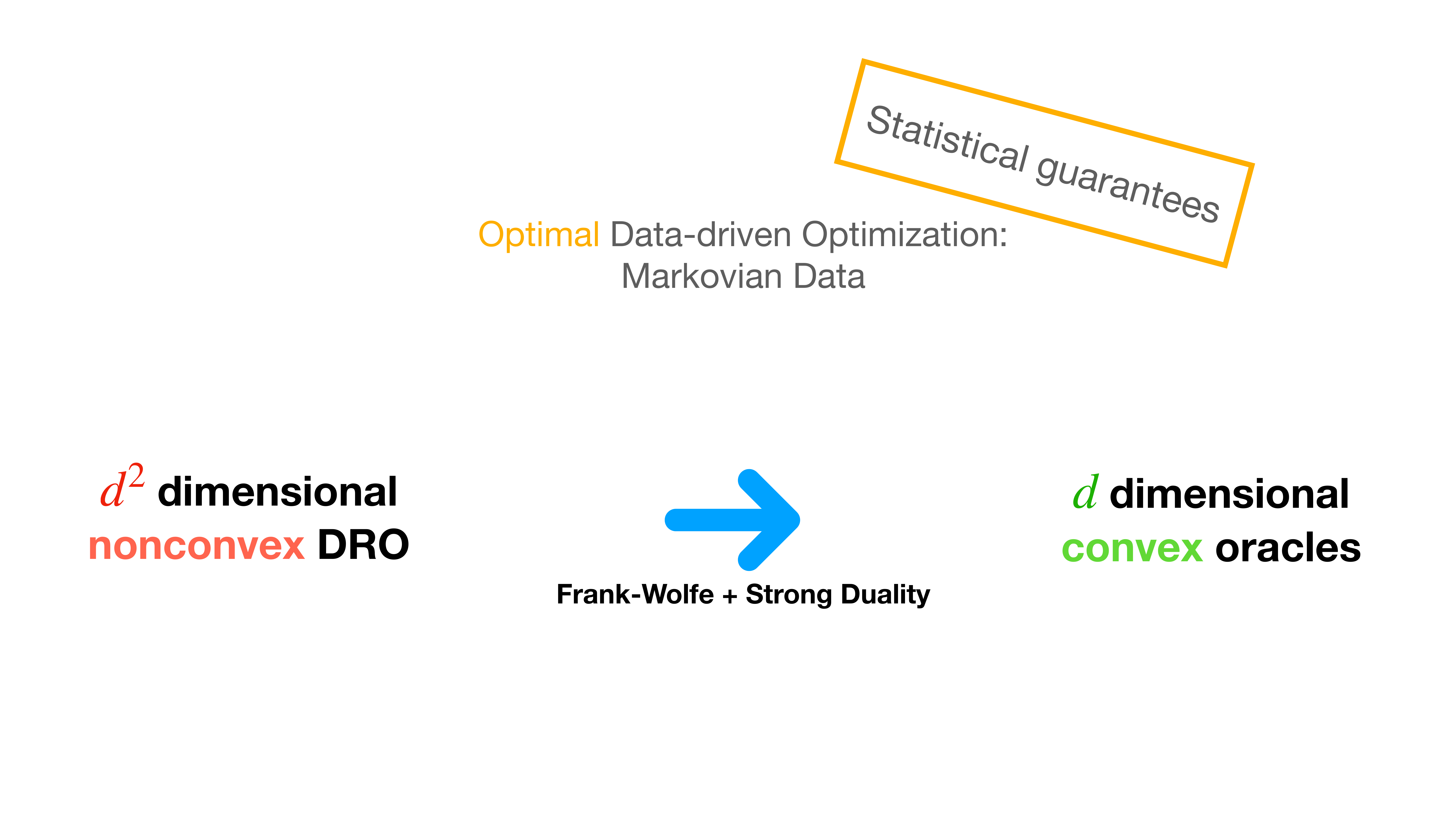}\vspace{-10pt}
        \caption{Projection of the conditional relative entropy ambiguity set into the $(\theta_{11}, \theta_{22})$-plane for~$\theta'=\frac{1}{2}I$ and $r=10^{k}$, where $k=-3$ (dark red), $k=-2$ (light red), $k=-1$ (orange), $k=0$ (yellow) and $k=1$ (green). This ambiguity set shrinks to the line segment between the two points $(0,1)$ and $(1,0)$ as the radius~$r$ tends to~$0$.}
        \label{fig:contour}
        \vspace{-10pt}
\end{figure}
\end{myrmk}

\begin{myrmk}[Nonconvexity of problem~\eqref{def:DRO}]\label{rmk:nonconvexity}
    Fix the estimator realization 
    \[
        \theta'=\begin{bmatrix}
       0.2 & 0.375\\
       0.375 & 0.05
    \end{bmatrix}\in\Theta',
    \]
    define the function $f:\Theta\to\Re$ through $f( \theta)= \sum_{i,j} \theta'_{ij}(-\log(\theta_{ij}/\sum_k \theta_{ik}))$ and set~$\bar r=r- \sum_{i,j} \theta'_{ij}\log(\theta'_{ij}/\sum_k \theta'_{ik})$. Using this notation, the feasible set of problem~\eqref{def:DRO} can be reformulated concisely as $\left\{\theta\in\Theta : f(\theta)\leq \bar r\right\}$. Next, set
    \begin{align*}
     \theta_1=\begin{bmatrix}
        0.9 & 0.045\\
        0.045 & 0.01
    \end{bmatrix}\in\Theta \quad \text{and} \quad
    \theta_2=\begin{bmatrix}
        0.01 & 0.045\\
        0.045 & 0.9
    \end{bmatrix}\in\Theta.
    \end{align*}
An explicit calculation reveals that $f(\theta_1)\le f(\theta_2)\le 1.6$, whereas $f((\theta_1+\theta_2)/2)>1.6$. This implies that both~$\theta_1$ and~$\theta_2$ are feasible in problem~\eqref{def:DRO} if $r$ is chosen such that~$\bar r=1.6$, while their midpoint~$(\theta_1+\theta_2)/2$ is not. Hence, the feasible set of~\eqref{def:DRO} is generically nonconvex. This allows us to conclude that~\eqref{def:DRO} is generically a nonconvex problem.
\end{myrmk}
 \begin{myprop}[Reparametrized ambiguity set]\label{prop:D:convex}
	For any fixed $P_{\theta^\prime}\in \mc P$, $\pi_{\theta'}\in\Delta_d$ and $r>0$ the ambiguity set 
	\begin{equation}\label{eq:feasible:region:psi}
	{\mc D} = \left\{P\in{\mc P}: \sum_{i\in\Xi} (\pi_{\theta'})_i\D{(P_{\theta^\prime})_{i\cdot}}{P_{i\cdot}}\leq r\right\}
	\end{equation}
	is convex and compact.
\end{myprop}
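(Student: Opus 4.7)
The plan is to establish convexity and compactness separately. Convexity is straightforward from standard properties of relative entropy, while the compactness argument requires some care because $\mc P$ itself is not closed: it consists of row-stochastic matrices with strictly positive entries. The resolution is that the conditional relative entropy constraint acts as a barrier that prevents any sequence in $\mc D$ from converging to a matrix with a vanishing entry.

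For convexity, I would invoke the joint convexity of $(q,p)\mapsto\D{q}{p}$ and, in particular, its convexity in the second argument $p$ for fixed $q$. Hence each map $P\mapsto\D{(P_{\theta'})_{i\cdot}}{P_{i\cdot}}$ is convex on $\R^{d\times d}_{++}$, and so is the nonnegative linear combination $\Phi(P):=\sum_{i\in\Xi}(\pi_{\theta'})_i\D{(P_{\theta'})_{i\cdot}}{P_{i\cdot}}$. Moreover, $\mc P$ is the intersection of the affine space $\{P:P\mathbf{1}=\mathbf{1}\}$ with the open orthant $\R^{d\times d}_{++}$ and is therefore convex. Consequently $\mc D=\mc P\cap\{P:\Phi(P)\le r\}$ is convex as an intersection of convex sets.

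For compactness, boundedness is immediate since every row-stochastic matrix lies in $[0,1]^{d\times d}$. To prove closedness in $\R^{d\times d}$, I would take any sequence $\{P^{(k)}\}\subseteq\mc D$ with $P^{(k)}\to P^*$. Passing to the limit in $P^{(k)}\mathbf{1}=\mathbf{1}$ and $P^{(k)}\ge 0$ gives row-stochasticity and nonnegativity of $P^*$. Since $P_{\theta'}\in\mc P$ implies $(P_{\theta'})_{ij}>0$ for every $i,j$, each summand $(P_{\theta'})_{ij}\log((P_{\theta'})_{ij}/P_{ij})$ is continuous in $P_{ij}$ on $(0,\infty)$ and diverges to $+\infty$ as $P_{ij}\downarrow 0$. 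Hence $\Phi$ is lower semicontinuous on the simplex of row-stochastic matrices, which yields $\Phi(P^*)\le\liminf_k\Phi(P^{(k)})\le r$. Therefore, if $P^*_{ij}=0$ for some $i$ with $(\pi_{\theta'})_i>0$, then $\Phi(P^*)=\infty$, a contradiction; this forces $P^*_{i\cdot}>0$ on every row $i$ in the support of $\pi_{\theta'}$.

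The main obstacle is closing this last gap on rows $i$ with $(\pi_{\theta'})_i=0$, since for those rows the constraint imposes nothing and one might fear $P^*\notin\mc P$. This is innocuous in the setting where the proposition is invoked (via Lemma~\ref{reparam}), namely $\theta'>0$: then $\pi_{\theta'}$ is the stationary distribution of the ergodic chain with transition matrix $P_{\theta'}>0$, whence $\pi_{\theta'}>0$. Thus every entry of $P^*$ is strictly positive, so $P^*\in\mc P$ and therefore $P^*\in\mc D$. Being a closed and bounded subset of $\R^{d\times d}$, $\mc D$ is compact by Heine--Borel.
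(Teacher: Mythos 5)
Your proof is correct, and it is in fact more careful than the one in the paper. The paper dispatches this proposition in one sentence, appealing to ``the convexity and compactness of the set $\mc P$ of all row-stochastic matrices'' together with the convexity and continuity of the relative entropy; your convexity argument is the same as theirs. Where you genuinely diverge is on compactness: $\mc P$ as defined in the paper consists of row-stochastic matrices with \emph{strictly positive} entries, so it is not closed, and closedness of $\mc D$ does not follow from intersecting a compact set with a sublevel set. Your barrier argument --- that $(P_{\theta'})_{ij}>0$ makes the summand $(P_{\theta'})_{ij}\log\bigl((P_{\theta'})_{ij}/P_{ij}\bigr)$ diverge as $P_{ij}\downarrow 0$, so no sequence in $\mc D$ can converge to the boundary of $\R^{d\times d}_{++}$ --- is exactly the missing step; the paper does spell out this positivity argument, but only later, in the proof of Lemma~\ref{reparam}, and does not invoke it here. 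You are also right to flag the rows with $(\pi_{\theta'})_i=0$: for such a $\pi_{\theta'}$ the unconstrained rows can drift to the boundary and $\mc D$ genuinely fails to be closed, so the statement as literally written needs $\pi_{\theta'}>0$; this holds wherever the proposition is used, since there $\pi_{\theta'}$ is the stationary distribution of the strictly positive matrix $P_{\theta'}$. In short, same skeleton, but your version supplies the closedness argument the paper's one-liner elides and pins down the hypothesis under which the claim is actually true.
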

\vspace{-10pt}
\begin{proof}
	This is an immediate consequence of the convexity and compactness of the set~$\mathcal P$ of all row-stochastic matrices in~$\R^{d\times d}$ and the convexity and continuity of the relative entropy on~$\Delta_d\times\Delta_d$. 
\end{proof}



\section{Asymptotic Consistency}

The estimator~$\widehat \theta_T$ may fail to have balanced marginals and thus fail to belong to~$\cl\Theta$. In this case, the worst-case expectation problem~\eqref{def:DRO} with~$\theta'=\widehat\theta_T$ may be infeasible for small~$r$. The following lemma, which is an important ingredient for the proof of Theorem~\ref{thm:asymptotic:consistency}, shows that problem~\eqref{def:DRO} is guaranteed to be feasible and solvable whenever~$r\geqslant d/T$. 
\begin{mylem}[Solvability of \eqref{def:DRO}] \label{lem:feasiblity}
If~$\theta'$ is a realization of~$\widehat\theta_T$ for some~$T\in\mathbb N$ and~$r\geq \frac{d}{T}$, then problem~\eqref{def:DRO} is solvable.
\end{mylem}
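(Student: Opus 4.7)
The plan is to exhibit an explicit feasible point for~\eqref{def:DRO} and then to deduce solvability from standard compactness/continuity arguments. Once a single $\bar\theta\in\cl\Theta$ satisfying $\Dc{\widehat\theta_T}{\bar\theta}\leq r$ is produced, the maximum is automatically attained because $c(x,\cdot)$ is linear (hence continuous) in~$\theta$, while the sublevel set $\{\theta\in\cl\Theta:\Dc{\theta'}{\theta}\leq r\}$ is closed (by lower semi-continuity of~$\Dc{\theta'}{\cdot}$) and bounded (since it lies in the probability simplex), hence compact. This reduces the lemma to a pure feasibility question.

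The key observation driving the construction of~$\bar\theta$ is that the empirical count matrix $n_{ij}=T(\widehat\theta_T)_{ij}$ fails to have balanced marginals only because of boundary effects. Writing $a=\xi_0$ and $b=\xi_T$, one checks that the row-sum-minus-column-sum at state~$i$ equals $\indic{i=a}-\indic{i=b}$. If $a=b$, the matrix $\widehat\theta_T$ already lies in~$\cl\Theta$, so we may take $\bar\theta=\widehat\theta_T$ and obtain $\Dc{\widehat\theta_T}{\bar\theta}=0$. If $a\neq b$, we restore marginal balance by adding a single phantom transition from~$b$ to~$a$ and renormalizing:
\[
    \bar\theta_{ij}=\frac{n_{ij}+\indic{(i,j)=(b,a)}}{T+1}.
\]
Routine verification confirms that~$\bar\theta$ is non-negative, sums to~$1$, and satisfies $\sum_j\bar\theta_{ij}=\sum_j\bar\theta_{ji}$ for every~$i$, so $\bar\theta\in\cl\Theta$.

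The remaining step is to bound $\Dc{\widehat\theta_T}{\bar\theta}$. The central simplification is that for every $i\neq b$ the perturbation preserves the row-conditional distribution, i.e.\ $\bar\theta_{ij}/\bar\pi_i=n_{ij}/n_i=(\widehat\theta_T)_{ij}/(\pi_{\widehat\theta_T})_i$ whenever $n_i>0$, while the corresponding term vanishes by the convention $0\log(0/0)=0$ otherwise. Hence only the row $i=b$ contributes to the conditional relative entropy, and a direct computation yields
\[
\Dc{\widehat\theta_T}{\bar\theta}=\frac{1}{T}\Bigl[n_b\log\bigl(1+\tfrac{1}{n_b}\bigr)-n_{ba}\log\bigl(1+\tfrac{1}{n_{ba}}\bigr)\Bigr],
\]
where terms indexed by a zero count are read as zero. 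The map $x\mapsto x\log(1+1/x)$ is strictly increasing on $(0,\infty)$ with supremum~$1$, and since $n_{ba}\leq n_b$, the bracketed quantity lies in $[0,1]$. This gives $\Dc{\widehat\theta_T}{\bar\theta}\leq 1/T\leq d/T\leq r$, establishing feasibility and thereby solvability.

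The main obstacle in executing this plan is the careful bookkeeping of the row-conditional distributions under the phantom-transition perturbation, together with the handling of zero-count corner cases through the logarithm conventions. I note that the construction actually delivers the sharper bound $\Dc{\widehat\theta_T}{\bar\theta}\leq 1/T$, so the hypothesis $r\geq d/T$ is not tight for feasibility per se; the factor~$d$ is, however, consistent with what Theorem~\ref{thm:asymptotic:consistency} subsequently requires.
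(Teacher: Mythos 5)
Your proposal is correct and follows essentially the same route as the paper: both repair the unbalanced marginals of $\widehat\theta_T$ by appending a single ghost transition, renormalizing by $T+1$, bounding the conditional relative entropy between $\widehat\theta_T$ and the repaired matrix, and then concluding solvability from compactness of the sublevel set and continuity of $c(x,\cdot)$. Two differences are worth recording. First, the paper's ghost transition is $(\xi_T,\xi_1)$, which balances the marginals only when $\xi_0=\xi_1$; your choice $(\xi_T,\xi_0)$ is the one that actually closes the cycle $\xi_0\to\cdots\to\xi_T\to\xi_0$ and places the repaired matrix in $\cl\Theta$, so your version corrects what appears to be a typo in the paper's construction. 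Second, the paper bounds the entropy term row by row, discarding the non-positive logarithm and applying $\log(1+t)\le t$, which yields only $\Dc{\widehat\theta_T}{\bar\theta}\le d/T$ and thus consumes the hypothesis $r\ge d/T$ exactly; your observation that all rows $i\neq \xi_T$ contribute nothing (the row-conditional distributions are unchanged there) reduces the computation to a single row and gives the sharper bound $1/T$, showing the hypothesis is not tight for feasibility---though, as you note, the $d/T$ threshold is what Theorem~\ref{thm:asymptotic:consistency} actually invokes. Both arguments are valid; yours is tighter and more careful with the zero-count conventions.
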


\begin{proof}
Consider the modified estimator $\widetilde{\theta}_T$ defined through
\begin{equation*}
 (\widetilde{\theta}_{T})_{i j}=\frac{1}{T+1} \left(\sum_{t=1}^{T} \indic{\left(\xi_{t-1}, \xi_{t}\right)=(i, j)} +\indic{\left(\xi_{T}, \xi_{1}\right)=(i, j)} \right) \quad \forall i,j\in\Xi.   
\end{equation*}
Note that $\widetilde{\theta}_T$ differs from our standard estimator $\widehat \theta_T$ in that it accounts for an artificial `ghost' transition from $\xi_T$ to $\xi_1$, which ensures that $\widetilde{\theta}_T$ has balanced marginals, that is, $\widetilde{\theta}_T\in\cl \Theta$. In the following we show that the conditional relative entropy distance between the two estimators $\widehat \theta_T$ and $\widetilde{\theta}_T$ is small for large $T$. To see this, note first that
\begin{align*}
    \Dc{\widehat\theta_T}{\widetilde\theta_T}&=\sum_{i, j=1}^d (\widehat{\theta}_T)_{i j}\left(\log \frac{(\widehat{\theta}_T)_{i j}}{\sum_{k=1}^d (\widehat{\theta}_T)_{i k}}-\log \frac{(\widetilde{\theta}_T)_{i j}}{\sum_{k=1}^d (\widetilde{\theta}_T)_{i k}}\right)
    =\sum_{i, j=1}^d (\widehat{\theta}_T)_{i j}\left(\log \frac{(\widehat{\theta}_T)_{i j}}{(\widetilde{\theta}_T)_{i j}}\cdot\frac{\sum_{k=1}^d (\widetilde{\theta}_T)_{i k}}{\sum_{k=1}^d (\widehat{\theta}_T)_{i k}}\right).
\end{align*}
By using the definitions of~$\widehat \theta_T$ and~$\widetilde\theta_T$, for any fixed $i,j\in\Xi$ the logarithm in the last expression can be reformulated as
\begin{align*}
 \displaystyle  &\log \left( \frac{\frac{1}{T} (\sum_{t=1}^{T} \indic{\left(\xi_{t-1}, \xi_{t}\right)=(i, j)} )}{\frac{1}{T+1} (\sum_{t=1}^{T} \indic{\left(\xi_{t-1}, \xi_{t}\right)=(i, j)} +\indic{\left(\xi_{T}, \xi_{1}\right)=(i, j)})} \cdot \frac{\sum_{k=1}^d\frac{1}{T+1}(\sum_{t=1}^{T} \indic{\left(\xi_{t-1}, \xi_{t}\right)=(i, k)} +\indic{\left(\xi_{T}, \xi_{1}\right)=(i, k)})}{\sum_{k=1}^d\frac{1}{T} (\sum_{t=1}^{T} \indic{\left(\xi_{t-1}, \xi_{t}\right)=(i, k)} ) } \right) \\
 &\qquad=\log \left( \frac{\sum_{t=1}^{T} \indic{\left(\xi_{t-1}, \xi_{t}\right)=(i, j)} }{\sum_{t=1}^{T} \indic{\left(\xi_{t-1}, \xi_{t}\right)=(i, j)} +\indic{\left(\xi_{T}, \xi_{1}\right)=(i, j)}} \right)+ \log\left( \frac{\sum_{k=1}^d \sum_{t=1}^{T} \indic{\left(\xi_{t-1}, \xi_{t}\right)=(i, k)} +\sum_{k=1}^d \indic{\left(\xi_{T}, \xi_{1}\right)=(i, k)}}{\sum_{k=1}^d \sum_{t=1}^{T} \indic{\left(\xi_{t-1}, \xi_{t}\right)=(i, k)}  } \right)\\
  &\qquad\leq \log\left(1+\frac{\sum_{k=1}^d\indic{\left(\xi_{T}, \xi_{1}\right)=(i, k)}}{\sum_{k=1}^d \sum_{t=1}^{T} \indic{\left(\xi_{t-1}, \xi_{t}\right)=(i, k)}  }\right) = \log\left( 1 + \frac{\indic{\xi_T=i}}{T\sum_{k=1}^d(\widehat \theta_T)_{ik}} \right) \leq \frac{1}{T\sum_{k=1}^d(\widehat \theta_T)_{ik}}\ ,
\end{align*}
where the first inequality is obtained by omitting the first (non-positive) of the two logarithm terms, while the second inequality uses the elementary bounds~$\indic{\xi_T=i}\leqslant 1$ and $\log(1+t) \leq t$ for all $t\geqslant 0$. Substituting the resulting estimate into the above formula for the conditional relative entropy then yields
\begin{align*}
     \Dc{\widehat\theta_T}{\tilde\theta_T} \leq \sum_{i, j=1}^d (\widehat{\theta}_T)_{i j} \frac{1}{T\sum_{k=1}^d(\widehat \theta_T)_{ik}}
     = \sum_{i=1}^d \frac{1}{T} = \frac{d}{T}.
\end{align*}
As~$\widetilde \theta_T\in\cl\Theta$, this inequality implies that~$\widetilde{\theta}_T$ is feasible in the worst-case expectation problem~\eqref{def:DRO} if~$\theta'=\widehat\theta_T$ and~$r \geq \frac{d}{T}$. Hence, problem~\eqref{def:DRO} maximizes a continuous function over a non-empty compact set and is therefore solvable.
\end{proof}

\begin{proof}[Proof of Theorem~\ref{thm:asymptotic:consistency}]
Fix any~$\theta\in\Theta$, and denote by $\theta^\star_{T}\in\cl \Theta$ a maximizer of problem~\eqref{def:DRO} with $\theta'=\widehat\theta_T$ and~$r=r_T$, which exists thanks to Lemma~\ref{lem:feasiblity} and because $r_T\geq d/T$. Thus, we have $\Dc{\widehat{\theta}_T}{\theta^\star_{T}} \leqslant r_T$ by feasibility and $\widehat c_{r_T}(x,\widehat\theta_T)= c(x,\theta^\star_{T})$ by optimality. Note that $\widehat{\theta}_T$ converges $\mb P_\theta$-almost surely to $\theta$ thanks to the ergodic theorem for Markov chains \citep[Theorem~4.1]{ross2010introduction}. Without loss of generality, we now focus on a realization of the stochastic process of training samples for which $\widehat{\theta}_T$ represents a deterministic sequence converging to $\theta$ with certainty. For this fixed realization, we can prove that~$\theta^\star_{T}$ converges to $\theta$, as well. To this end, assume for the sake of argument that~$\theta^\star_{T}$ does {\em not} converge to~$\theta$. As~$\cl \Theta$ is compact, this implies that the sequence~$\theta^\star_{T}$ has a cluster point~$\theta^\star\in\cl \Theta$ such that~$\theta^\star\neq\theta$. This in turn implies that there exists a subsequence $\{\theta^\star_{T_k}\}_{k\in\mathbb N}$ in~$\cl \Theta$ that converges to~$\theta^\star$. Thus, we find the contradiction
\begin{align*}
     0< \Dc{\theta}{\theta^\star} = \Dc{\lim_{k\to\infty}\widehat{\theta}_{T_k}}{\lim_{k\to\infty}\theta^\star_{T_k}} \leqslant \liminf_{k\to\infty} \Dc{\widehat{\theta}_{T_k}}{\theta^\star_{T_k}} \leqslant \liminf_{k\to\infty}r_{T_k}=0,
\end{align*}
where the first (strict) inequality follows from Proposition~\ref{prop:properties:conditional:entropy}\ref{nonneg} and the assumption that~$\theta\in\Theta$, the second inequality holds because the conditional relative entropy is lower semi-continuous on~$\Theta'\times\cl \Theta$, and the third inequality exploits the feasibility of~$\theta^\star_{T_k}$ in~\eqref{def:DRO} with~$r=r_{T_k}$. We may therefore conclude that~$\theta^\star_T$ converges to~$\theta$ as~$T$ tends to infinity.

Next, for any fixed decision $x\in X$ we have
$$ 
    \lim_{T\to\infty} \widehat c_{r_T}(x,\widehat \theta_{T})=\lim_{T\to\infty} c(x,\theta^\star_{T})= c(x,\lim_{T\to\infty}\theta^\star_{T})=c(x,\theta), 
$$
where the first equality exploits the feasibility of~$\theta^\star_{T}$ in~\eqref{def:DRO} with~$r=r_{T}$, and the second equality holds because $c(x,\theta)$ is continuous (linear) in~$\theta$. As $\widehat{\theta}_T$ converges to $\theta$ for $\mb P_\theta$-almost every trajectory of the training samples, the claim follows.
\end{proof}

\section{Proofs and Background Results for Section~\ref{ssec:approximation:algorithm}} \label{app:sec:theoretical:guarantees}
This section discusses the theoretical foundations of Algorithms~\ref{alg:FW} and~\ref{alg:subprob}. We first provide a proof for Lemma~\ref{reparam}, which reformulates problem~\eqref{def:DRO} with a nonconvex feasible set as problem~\eqref{eq:lipschitz:DRO:problem} with a nonconvex objective function.

\begin{proof}[Proof of Lemma~\ref{reparam}]
We first show that the ambiguity set $\mc D_r(\theta')$ is a subset of~$\R^{d\times d}_{++}$. To this end, assume for the sake of argument that there is a transition probability matrix~$P\in\mc D_r(\theta')$ and two states~$i,j\in\Xi$ such that~$P_{ij}=0$. Then, we have
$$
    \infty= (P_{\theta'})^2_{ij}\log\left(\frac{(P_{\theta'})_{ij}}{P_{ij}} \right) \leqslant \sum_{k\in\Xi} (\pi_{\theta'})_k\D{(P_{\theta^\prime})_{k\cdot}}{(P)_{k\cdot}} \leqslant r<\infty,
$$
where the equality follows from our standard conventions for the logarithm and the assumptions that~$\theta'>0$, which implies that~$(P_{\theta'})_{ij}>0$. We have thus derived a contradiction, which allows us to conclude that~$P_{ij}>0$ for all~$i,j\in\Xi$. Choose now any~$P\in\mc D_r(\theta')$, and use~$\pi$ to denote the stationary distribution corresponding to~$P$. The above arguments imply that~$P>0$. In addition, the invariant distribution $\pi$ satisfies the following system of linear equations.
\begin{equation} \label{eq:defining:property:of:Ad}
	\begin{bmatrix}
		P_{11}-1 & P_{21} & \ldots  & P_{(d-1)1} & P_{d1}\\
		P_{12}& P_{22}-1 & \ldots & P_{(d-1)2}&  P_{d2}\\
		\vdots & \vdots & \ddots &\vdots & \vdots \\
		P_{1(d-1)}& P_{2(d-1)} & \ldots &P_{(d-1)(d-1)}-1&  P_{d(d-1)}\\
		1		&	1	&	 \ldots	& 1 & 1
			\end{bmatrix}
			\begin{bmatrix}
				\pi_1\\\pi_2\\\vdots \\\pi_{d-1}\\\pi_d
			\end{bmatrix}=\begin{bmatrix}
				0\\0\\\vdots \\0\\1
			\end{bmatrix}
\end{equation}
Indeed, the first $d-1$ equations are equivalent to the stationarity condition $\pi P=\pi$, which can be recast as $(P^\top-I)\pi^\top=0$, and the last equation represents the normalization condition $\sum_{i=1}^d \pi_i=1$. Next, denote the constraint matrix in~\eqref{eq:defining:property:of:Ad} by~${A}_d(P)$. The matrix $(P^\top-I)$ has rank $d-1$, and the matrix ${A}_d(P)$, which is obtained by replacing the last row of $(P^\top-I)$ with a row of ones, has full rank \citep[Theorem 4.16]{berman1994nonnegative}. Thus, the system of equations~\eqref{eq:defining:property:of:Ad} has a unique solution~$\pi$. Moreover, as~$P>0$, the Perron-Frobenius theorem~\citep{perron1907} guarantees that~$\pi>0$. Using the notation introduced so far, the objective function of problem~\eqref{def:DRO} can be reformulated as 
\begin{align*}
	c(x,\theta)=\mb E_{\theta}[L(x,\xi)]=
	\sum_{i=1}^d L(x,i)\pi_i=\sum_{i=1}^d L(x,i) ({A}_d(P)^{-1})_{id}=\psip,
\end{align*}
where the last equality follows from the definition of
$\psip$ in the main text. Thus, the claim follows.
\end{proof}

\begin{myrmk}[Nonconvexity of problem \eqref{eq:lipschitz:DRO:problem}]\label{rmk:nonconvex}
The function $\Psi(x,P)$ is neither concave nor convex in~$P$ if $d\geqslant 2$.
To see this, consider an example with $d=2$, and assume that $L(x,1)\ne L(x,2)$. As $P\in\R^{2\times 2}$ is a row-stochastic matrix, we have $P_{12} = 1-P_{11}$ and $P_{22} = 1 - P_{21}$. These relations allow us to define an auxiliary function $\tilde \Psi$, through
\[
    \tilde \Psi(x,P_{11}, P_{21}) = \Psi\left(x,\begin{bmatrix}
        P_{11} & 1-P_{11}\\
        P_{21} & 1- P_{21}
    \end{bmatrix}\right).
\]
A direct calculation then shows that
\begin{align*}
	\det\left(\nabla_{(P_{11},P_{21})}^2\tilde\Psi(x,P_{11}, P_{21})\right)=-\frac{(L(x,1)-L(x,2))^2}{\left(1- P_{11}+ P_{21}\right)^4} < 0,
\end{align*} 
which indicates that the Hessian matrix of $\tilde\Psi(x,P_{11}, P_{21})$ has a positive and a negative eigenvalue. Hence, $\tilde\Psi(x,P_{11}, P_{21})$ is neither concave nor convex in $(P_{11}, P_{21})$, which implies that $\psip$ is neither concave nor convex in~$P$. 
\end{myrmk}

Despite being nonconvex, problem \eqref{eq:lipschitz:DRO:problem} displays desirable structural properties, which ensure that Algorithm~\ref{alg:FW} converges.

\begin{mylem}[Lipschitz continuous gradient]\label{lem:objlipschitzgrad}
	If $\theta'>0$, then $\nabla_P\psip$ is Lipschitz continuous in~$P$ on the ambiguity set~$\mc D_r(\theta') = \{ P\in \mc P: \sum_{i} (\pi_{\theta'})_i\D{(P_{\theta^\prime})_{i\cdot}}{(P)_{i\cdot}}\leqslant r\}$ for any fixed $x\in X$.
\end{mylem}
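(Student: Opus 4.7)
\textbf{Proof plan for Lemma~\ref{lem:objlipschitzgrad}.}
The plan is to show that $\Psi(x,\cdot)$ extends to a rational function of the entries of $P$ whose denominator is bounded away from zero on $\mathcal D_r(\theta')$; continuous differentiability of the gradient on a neighborhood of this compact set then yields the Lipschitz bound. Throughout, $x\in X$ is fixed.

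First, I would exploit compactness. By Proposition~\ref{prop:D:convex} the ambiguity set $\mathcal D_r(\theta')$ is convex and compact. Moreover, because $\theta'>0$ implies $P_{\theta'}>0$, the finiteness of $\mathsf{D}_{\mathrm c}$ rules out vanishing entries: as established in the proof of Lemma~\ref{reparam}, any $P\in \mathcal D_r(\theta')$ satisfies $P>0$. A standard coercivity argument for the relative entropy (the term $(P_{\theta'})_{ij}\log((P_{\theta'})_{ij}/P_{ij})$ diverges to $+\infty$ as $P_{ij}\downarrow 0$) then shows that $P_{ij}\ge c>0$ for some constant $c$ depending only on $\theta'$ and $r$. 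Consequently every $P\in \mathcal D_r(\theta')$ is the transition kernel of an ergodic Markov chain, so $A_d(P)$ is invertible (cf.\ the argument following~\eqref{eq:defining:property:of:Ad}).

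Next, by Cramer's rule we have
\[
 (A_d(P)^{-1})_{id} \;=\; \frac{(-1)^{i+d}\,M_{id}(P)}{\det A_d(P)},
\]
where $M_{id}(P)$ is the $(i,d)$-minor of $A_d(P)$. Both $M_{id}(P)$ and $\det A_d(P)$ are polynomials in the entries of $P$. Hence $\Psi(x,P)=\sum_i L(x,i)(A_d(P)^{-1})_{id}$ is a rational function of the entries of $P$, and its only potential singularities occur on the zero set of $\det A_d(P)$. By the preceding paragraph, this zero set does not intersect $\mathcal D_r(\theta')$, so continuity of $\det$ and compactness of $\mathcal D_r(\theta')$ yield a constant $\delta>0$ with $|\det A_d(P)|\ge \delta$ for all $P\in \mathcal D_r(\theta')$. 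By continuity, the inequality $|\det A_d(P)|\ge \delta/2$ holds on some open neighborhood $\mathcal U \supseteq \mathcal D_r(\theta')$ of the ambiguity set.

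Therefore $\Psi(x,\cdot)$ is infinitely differentiable on $\mathcal U$, which in turn implies that $\nabla_P\Psi(x,\cdot)$ is continuously differentiable on $\mathcal U$. Its Jacobian (with respect to~$P$) is continuous on $\mathcal U$ and thus bounded on the compact set $\mathcal D_r(\theta')$, and the mean value theorem delivers a finite Lipschitz constant $L_\Psi>0$ such that $\|\nabla_P\Psi(x,P_1)-\nabla_P\Psi(x,P_2)\|\le L_\Psi\|P_1-P_2\|$ for all $P_1,P_2\in\mathcal D_r(\theta')$. The main obstacle in this plan is the uniform non-degeneracy step: one must combine the finiteness of the conditional relative entropy, which forces strict positivity of $P$, with compactness of $\mathcal D_r(\theta')$ to rule out any sequence in the ambiguity set along which $\det A_d(P)$ approaches zero.
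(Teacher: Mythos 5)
Your proposal is correct and follows essentially the same route as the paper's proof: both rely on $P>0$ and invertibility of $A_d(P)$ on $\mathcal D_r(\theta')$ (from the proof of Lemma~\ref{reparam}), compactness of the ambiguity set to bound $|\det A_d(P)|$ away from zero, and the rational-function structure of $(A_d(P)^{-1})_{id}$ via Cramer's rule. The only cosmetic difference is the last step, where the paper writes out the partial derivatives of $\pi_k$ explicitly and cites a result on Lipschitz continuity of rational functions on compact sets, whereas you infer smoothness on a neighborhood and combine a bounded Hessian with convexity of $\mathcal D_r(\theta')$ and the mean value theorem; both are valid.
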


\begin{proof}
From the proof of Lemma~\ref{reparam} we know that~$P>0$ and $A_d(P)$ is invertible for any~$P\in\mc D_r(\theta)$. In addition, as~$\det(A_d(P))$ is continuous on the compact set~$\mc D_r(\theta')$, there exists~$\delta>0$ such that~$|\det(A_d(P))| > \delta$ for all~$P\in \mc D_r(\theta')$. To show that $\nabla_P\psip$ is Lipschitz continuous, it suffices to show that the gradient of~$\pi_k=(A_d(P)^{-1})_{kd}$ with respect to~$P$ is Lipschitz continuous for every~$k\in\Xi$. In the remainder of the proof, we use~$A$ as a shorthand for~$A_d(P)$. In addition, we use $A_{ij}$ to denote the $(i,j)$-th minor and $A_{ij,k\ell}$ to denote the $(ij, k\ell)$-th minor of $A$, respectively. Cramer's rule for computing inverse matrices then implies that $\pi_k=(-1)^{d+k}A_{dk}/\det(A)$.
The partial derivatives of~$\pi_k$ are given by
\begin{align*}
\frac{\partial \pi_k}{\partial P_{ij}}=\left\{\begin{array}{ll}
\displaystyle \frac{(-1)^{d+k+1}A_{dk}}{\det(A)^2}\left(\sum_{\ell\ne k} (-1)^{d+\ell} A_{dj,\ell k}\right) & \text{if $i=k$ and $j<d$},\\
\displaystyle \frac{(-1)^{d+k+1}A_{dk}}{\det(A)^2}\left(\sum_{\ell\ne i} (-1)^{d+\ell} A_{dj,\ell i}\right)+\frac{(-1)^{d+k}A_{dj,ki}}{\det(A)} & \text{if $i\neq k$ and $j<d$,}\\
0 & \text{if $j=d$,}
\end{array}\right.
\end{align*}
where $\sum_{\ell\ne i} (-1)^{d+\ell} A_{dj,\ell i}$ represents the partial derivative of $\det(A)$ with respect to $P_{ij}$.
In all cases, the partial derivatives represent rational functions of~$P$, that is, fractions of polynomials. As all polynomials are Lipschitz continuous on compact sets and as the denominator polynomials of all rational functions are bounded away from~$0$, all partial derivatives are indeed Lipschitz continuous thanks to~\citep[Theorem 12.5]{Erikssonapplied2004}. Hence, $\nabla_P\psip$ is Lipschitz continuous.
\end{proof}

We now prove that Algorithm~\ref{alg:FW} converges to an approximate stationary point with an arbitrarily small Frank-Wolfe gap.



\begin{proof}[Proof of Theorem~\ref{thm:global:convergence:FW}]
As~$\theta'>0$, Lemma~\ref{lem:objlipschitzgrad} implies that $\nabla_P \psip$ is Lipschitz continuous in~$P$ on the ambiguity set~$\mc D_r(\theta')$. Therefore, the objective function~$\psip$ has a finite curvature constant; see \citep[Lemma~7]{pmlr-v28-jaggi13}. As~$\mc D_r(\theta')$ is convex and compact thanks to Proposition~\ref{prop:D:convex}, the claim follows directly from \citep[Theorem 1]{lacoste2016convergence}.
\end{proof}

The efficiency of Algorithm~\ref{alg:FW} depends on the efficiency of Algorithm~\ref{alg:subprob} for solving oracle subproblems of the form
\begin{align} \label{primal:problem:linearized}
	\mc J^\star=  \left\{
	\begin{array}{cl}\max \limits_{P\in{\mc P}} & \tr {C^\top P} \\ \st &\sum_{i=1}^d \alpha_i \D{P'_{i\cdot}}{P_{i\cdot}}\leq r,
	\end{array}\right.
\end{align}
where $P'=P_{\theta'}$ is the transition probability matrix corresponding to~$\theta'$, $C=\nabla_P \Psi(x,P^{(m)})$ is the gradient of the objective function at a fixed anchor point~$P^{(m)}$ and $\alpha_i=(\pi_{\theta'})_i$ is the invariant probability of state $i\in\Xi$ under model~$\theta'$. 
Note that~\eqref{primal:problem:linearized} represents a convex optimization problem thanks to Proposition~\ref{prop:D:convex}. In order to prove Theorem~\ref{thm:correct:algsub} from the main text, we first show that a maximizer for~\eqref{primal:problem:linearized} can be constructed highly efficiently by solving the problem dual to~\eqref{primal:problem:linearized}.

\begin{myprop}
[Linearized oracle subproblem]\label{prop:linear:predictor}
If~$\varepsilon>0$, then the following statements hold.
\begin{enumerate}[label = (\roman*)]\setlength\itemsep{-0em}
	\item \label{duality:thm:i}The strong dual of the linearized oracle subproblem \eqref{primal:problem:linearized} is given by 
	\begin{align}\label{dual:problem:linearized}
		\mc J^\star=\min_{\substack{\eta \in \mathbb{R}^{d},\lambda \geqslant 0\\\eta_i> \max_j\{C_{ij}\}}}  \lambda (r-1) + \sum_{i=1}^d \eta_i+\lambda \sum_{j=1}^{d}\alpha_i P'_{ij}
		\log\left(\frac{\lambda\alpha_i}{\eta_i-C_{ij}}\right) .
	\end{align}
	\item \label{duality:thm:ii} If~$\eta\in\R^d$ is feasible in the dual problem \eqref{dual:problem:linearized}, then the corresponding optimal choice for $\lambda$ is given by 
\begin{align*}
		\lambda^\star(\eta)
=\exp \left(\sum_{i,j=1}^{d} \alpha_{i} P'_{ij} \log \left(\frac{\eta_i-C_{ij}}{\alpha_{i}}\right)-r\right).
	\end{align*} 
	\item \label{duality:thm:iv} If $\eta^\star$ is optimal in the dual problem~\eqref{dual:problem:linearized}, then we have
    $$
        \max_j\{C_{ij}\} < \eta_i^\star \leqslant (1- e^{-r})^{-1} \left(d\max_{ij}\{C_{ij}\}-e^{-r}\tr {C^\top P'}\right) - \sum_{\substack{k=1\\k\ne i}}^d \max_j\{C_{kj}\} \quad\forall i\in\Xi.
    $$
\item \label{duality:thm:iii} If~$P^\star$ is optimal in the primal problem \eqref{primal:problem:linearized} and $(\lambda^\star,\eta^\star)$ is optimal in the dual problem \eqref{dual:problem:linearized} with $\lambda^\star = \lambda^\star(\eta^\star)$, then
	$$	
	    P^{\star}_{ij}=\frac{\lambda^\star\alpha_i P'_{ij}}{\eta^\star_i-C_{ij}}\quad \forall i,j\in\Xi.
	$$
\end{enumerate}
\end{myprop}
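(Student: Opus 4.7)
The approach is standard convex duality. The primal~\eqref{primal:problem:linearized} is a convex program (linear objective, convex feasible set by Proposition~\ref{prop:D:convex}), and $P'=P_{\theta'}$ satisfies the relative entropy constraint strictly since $\sum_i\alpha_i\mathsf{D}(P'_{i\cdot}\|P'_{i\cdot})=0<r$, while also lying in~$\mathcal P$. Hence Slater's condition holds and strong duality applies.

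To prove assertion~\ref{duality:thm:i}, I would first dualize the normalization constraints $\sum_j P_{ij}=1$ with multipliers $\eta_i\in\mathbb R$ and the conditional relative entropy constraint with multiplier $\lambda\ge 0$, leaving the implicit positivity constraint to be handled in the inner maximization. Collecting the linear terms in $P_{ij}$ and the entropic terms $\lambda\alpha_iP'_{ij}\log P_{ij}$, the first-order condition in~$P_{ij}$ gives
\begin{equation*}
P_{ij}(\eta,\lambda)=\frac{\lambda\alpha_iP'_{ij}}{\eta_i-C_{ij}},
\end{equation*}
which forces the implicit domain constraint $\eta_i>\max_j\{C_{ij}\}$ (under which the Lagrangian is concave in $P_{ij}$, so the stationary point is the maximizer). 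Substituting this expression back into the Lagrangian and simplifying using $\sum_{i,j}\alpha_iP'_{ij}=1$ yields the dual objective as claimed. Assertion~\ref{duality:thm:iii} is then immediate: evaluating the formula above at an optimal pair $(\eta^\star,\lambda^\star)$ recovers a primal maximizer, and uniqueness of this primal reconstruction follows from the strict concavity contributed by the entropic term. For assertion~\ref{duality:thm:ii}, with $\eta$ fixed the dual objective is convex in~$\lambda>0$; setting its derivative to zero gives $\log\lambda=-r+\sum_{i,j}\alpha_iP'_{ij}\log((\eta_i-C_{ij})/\alpha_i)$, which is exactly $\lambda^\star(\eta)$.

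The main obstacle is assertion~\ref{duality:thm:iv}, the explicit upper bound on $\eta_i^\star$. My plan is to first plug $\lambda=\lambda^\star(\eta^\star)$ back into the dual objective: a direct simplification shows that the logarithmic term collapses to $-r\lambda^\star$, yielding the compact identity
\begin{equation*}
\mathcal{J}^\star=\sum_{i=1}^d\eta_i^\star-\lambda^\star.
\end{equation*}
Next, I would apply Jensen's inequality to the concave logarithm in the explicit formula for $\lambda^\star(\eta^\star)$ to obtain $\lambda^\star\le e^{-r}\bigl(\sum_i\eta_i^\star-\mathrm{tr}(C^\top P')\bigr)$. Combining these two relations gives
\begin{equation*}
(1-e^{-r})\sum_{i=1}^d\eta_i^\star\le \mathcal{J}^\star-e^{-r}\,\mathrm{tr}(C^\top P').
\end{equation*}
The trivial primal upper bound $\mathcal{J}^\star\le d\max_{ij}\{C_{ij}\}$ (using row-stochasticity of $P$) then yields an upper estimate on $\sum_i\eta_i^\star$, and finally $\eta_i^\star=\sum_k\eta_k^\star-\sum_{k\ne i}\eta_k^\star$ together with the lower bound $\eta_k^\star>\max_j\{C_{kj}\}$ from~\ref{duality:thm:i} delivers the stated per-coordinate bound.
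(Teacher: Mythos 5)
Your proposal is correct and follows essentially the same route as the paper's proof: the same Slater point $P'$, the same Lagrangian dualization of the normalization and entropy constraints, the same stationarity computation for $\lambda^\star(\eta)$, the same Jensen-plus-trivial-primal-bound argument for the box on $\eta^\star$, and the same first-order recovery of $P^\star$. The only cosmetic difference is that you evaluate the inner supremum over $P_{ij}$ directly via its first-order condition, whereas the paper routes the same computation through the conjugate of the Burg entropy; the resulting dual and domain constraint $\eta_i>\max_j\{C_{ij}\}$ are identical.
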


\begin{proof}[Proof of Proposition~\ref{prop:linear:predictor}]
	Assume first that~$P'>0$. By dualizing the conditional relative entropy constraint as well as the row-wise normalization conditions for the stochastic matrix~$P$, the primal convex program \eqref{primal:problem:linearized} can be reformulated as
		\begin{align}
		\mc J^\star &=\max _{P \geqslant 0} \sum_{i,j=1}^d C_{ij} P_{ij} +\inf _{\lambda \geqslant 0} \lambda\left(r-\sum_{i=1}^{d} \alpha_{i} \D{P'_{i\cdot}}{P_{i\cdot}}\right)+\inf_{\eta \in \mathbb{R}^{d}} \eta^\top(\mathbf 1-P\mathbf 1) \nonumber \\ 
		&=\min _{\eta \in \mathbb{R}^{d}, \lambda \geqslant 0} \lambda r+\sum_{i=1}^{d} \eta_{i}+\sup _{P \geqslant 0}\left\{ \sum_{i,j=1}^d C_{ij} P_{ij} -\eta^\top P \mathbf{1} -\lambda \alpha_{i} \D{P'_{i\cdot}}{P_{i\cdot}}\right\} \nonumber \\
		&=\min _{\eta \in \mathbb{R}^{d}, \lambda \geqslant 0} \lambda r+\sum_{i=1}^{d} \eta_{i}+\sum_{i,j=1}^{d} \sup _{P_{ij} \geqslant 0}\left\{C_{ij} P_{ij}-\eta_{i} P_{ij}-\lambda \alpha_{i} P'_{ij} \log \frac{P'_{ij}}{P_{ij}}\right\},
		\label{eq:P-from-lambda-eta}
		\end{align}
		where the second equality follows from strong duality, which holds because $P=P'>0$ constitutes a Slater point for the primal convex program~\eqref{primal:problem:linearized}, whereas the third equality exploits the definition of the relative entropy.
		Denoting the Burg entropy by $\phi(t)= -\log t+t-1$, we then obtain
        \begin{align*}
		\mc J^\star &= \min _{\eta \in \mathbb{R}^{d}, \lambda \geqslant 0} \lambda r+\sum_{i=1}^{d} \eta_{i}+\sum_{i,j=1}^{d} \sup _{P_{ij} \geqslant 0}\left\{\left(C_{ij}-\eta_{i}\right) P_{ij}-\lambda \alpha_{i} \left(P'_{ij} \phi\left(\frac{P_{ij}}{P'_{ij}}\right)-P_{ij}+P'_{ij}\right)\right\}\\
		&= \min _{\eta \in \mathbb{R}^{d}, \lambda \geqslant 0} \lambda (r-1) +\sum_{i=1}^{d} \eta_{i}+ \sum_{i,j=1}^{d}  \sup _{P_{ij} \geqslant 0}\left\{\left(C_{ij}-\eta_{i}+\lambda \alpha_{i}\right) P_{ij}-\lambda \alpha_{i} P'_{ij} \phi\left(\frac{P_{ij}}{P'_{ij}}\right)\right\} \\
		&= \min _{\eta \in \mathbb{R}^{d}, \lambda \geqslant 0} \lambda\left(r-1\right)+\sum_{i=1}^{d} \eta_{i}+ \lambda \sum_{i,j=1}^{d} \alpha_{i} P'_{ij} \sup_{t \geqslant 0}\left\{\frac{\left(C_{ij}-\eta_{i}+\lambda \alpha_{i}\right)}{\lambda \alpha_{i}} t-\phi(t)\right\},
		\end{align*}
where the second equality holds because $\sum_{i,j=1}^{d} \alpha_i P'_{ij} =1$, and the third equality follows from the substitution $t\leftarrow {P_{ij}}/{P'_{ij}}$. 
We know from \citep[Table~4]{ben2013robust} that the conjugate of the Burg entropy is given by
\begin{equation*}
				\phi^*(s)=\sup_{t\geqslant0} \left\{st-\phi(s)\right\} 
				= \left\{\begin{array}{cl}
				-\log(1-s) & \text{if } s<1,\\
				\infty & \text{if }s\geqslant 1.
				\end{array}\right.
\end{equation*}
By identifying $s$ with $(C_{ij}-\eta_{i}+\lambda \alpha_{i})/(\lambda \alpha_{i})$, we then obtain~\eqref{dual:problem:linearized}. Thus, Assertion~\ref{duality:thm:i} follows.
	
	As for Assertion~\ref{duality:thm:ii}, define
	$$
        \mathcal{J}(\lambda, \eta)=\lambda(r-1) + \sum_{i=1}^{d} \eta_{i}+\lambda  \sum_{j=1}^{d} \alpha_{i} P_{i j}^{\prime} \log \left(\frac{\lambda \alpha_{i}}{\eta_{i} -C_{i j}}\right)
    $$
    as the objective function of the dual problem~\eqref{dual:problem:linearized}, fix any~$\eta\in\R^d$ with~$\eta_i>\max_j \{C_{ij}\}$ for every~$i\in\Xi$, and use~$\lambda^\star(\eta)$ to denote the unique minimizer of the parametric convex optimization problem $\min_{\lambda\geqslant 0} \mathcal J(\lambda, \eta)$. Next, note that $\lambda^\star(\eta)$ must satisfy the first-order optimality condition
			\begin{align*}
			0=\frac{\partial}{\partial \lambda}\mc J(\lambda,\eta) = r-1+\sum_{i,j=1}^{d} \alpha_{i} P'_{ij} \log \left(\frac{\lambda \alpha_{i}}{\eta_{i}-C_{ij}}\right) +\sum_{i,j=1}^{d}\alpha_{i} P'_{ij} =r+\sum_{i,j=1}^{d} \alpha_{i} P'_{ij} \log \left(\frac{\lambda \alpha_{i}}{\eta_{i}-C_{ij}}\right),
		\end{align*}
	where the last equality holds again because $\sum_{i,j=1}^{d} \alpha_i P'_{ij} =1$. Solving the above equation for~$\lambda$ yields
	\begin{equation*}
		\lambda^\star (\eta)=\exp \left(\sum_{i,j=1}^{d} \alpha_{i} P'_{ij} \log \left(\frac{\eta_i-C_{ij}}{\alpha_{i}}\right)-r\right),
		\end{equation*}
and hence Assertion \ref{duality:thm:ii} follows.

Next, we prove Assertion \ref{duality:thm:iv}. To this end, select any dual optimal~$\eta^\star$, and substitute~$\lambda^\star(\eta^\star)$ into~$\mathcal J(\lambda,\eta^\star)$ to obtain
	\begin{align*}
	\mc J^\star=\mc J(\lambda^\star(\eta^\star), \eta^\star) &=
        \sum_{i=1}^d \eta^\star_i-\exp \left(\sum_{i,j=1}^{d} \alpha_{i} P'_{ij} \log \left(\frac{\eta^\star_i-C_{ij}}{\alpha_{i}}\right)-r\right) ,
    \end{align*}
    where the first equality follows from Assertions~\ref{duality:thm:i} and~\ref{duality:thm:ii}. As $\sum_{i,j=1}^{d} \alpha_i P'_{ij} =1$, we may then use Jensen's inequality to interchange the sum over $i$ and $j$ with the logarithm to obtain
\begin{align*}
	\mc J^\star&\geqslant \sum_{i=1}^d \eta^\star_i-e^{-r}\left(\sum_{i,j=1}^{d} P'_{ij} (\eta^\star_i-C_{ij})\right)= \sum_{i=1}^d \eta^\star_i(1- e^{-r})+e^{-r}\left(\sum_{i,j=1}^{d} P'_{ij}C_{ij}\right).
\end{align*}
By inspection of the primal objective function, we further have the trivial upper bound $\mathcal{J}^\star \leq d \max_{i,j}C_{ij}$. Combining these upper and lower bounds on $\mc J^\star$ yields
\begin{align*}
	\sum_{i=1}^d \eta^\star_i(1- e^{-r})+e^{-r}\left(\sum_{i,j=1}^{d} P'_{ij}C_{ij}\right)\leqslant d \max_{i,j}C_{ij}.
\end{align*}
As~$\eta^\star$ must satisfy the dual constraints $\eta^\star_i> \max_j\{C_{ij}\}$ for all~$i\in\Xi$, we may finally conclude that
\begin{equation*}
    \eta^\star_i \leq \frac{d \max_{i,j}C_{ij}}{1-e^{-r}} - \sum_{\substack{k=1\\k\ne i}}^d \max_j\{C_{kj} \} - \frac{e^{-r}}{1-e^{-r}}\left(\sum_{i,j=1}^{d} P'_{ij}C_{ij}\right) \quad \forall i\in\Xi.
\end{equation*}
This observation completes the proof of Assertion~\ref{duality:thm:iv}.

As for Assertion~\ref{duality:thm:iii}, note that the primal maximizer $P^\star$ can be computed cheaply from the dual minimizer~$(\lambda^\star,\eta^\star)$ by solving the inner maximization problem in~\eqref{eq:P-from-lambda-eta} for~$\lambda=\lambda^\star$ and~$\eta=\eta^\star$. Indeed, $P^\star$ must satisfy the first-order condition
\[
    C_{ij}-\eta^\star_i+\frac{\lambda^\star \alpha_i P'_{ij}}{P^\star_{ij}} = 0\quad\implies\quad P^{\star}_{ij}=\frac{\lambda^\star\alpha_i P'_{ij}}{\eta_i^\star-C_{ij}}
\]
for all~$i,j\in\Xi$, and thus the claim follows. We remark that if~$P'\not> 0$, then the proofs of Assertions~\ref{duality:thm:i}-\ref{duality:thm:iii} become more technical and require tedious case distinctions. However, no new ideas are needed. Details are omitted for brevity.
\end{proof}

In the following we denote by $Q(\eta)=\mc J(\lambda^\star(\eta), \eta)$ the partial minimum of $\mc J(\lambda,\eta)$ with respect to~$\lambda\ge 0$. From the proof of Proposition~\ref{prop:linear:predictor}\,\ref{duality:thm:iv} we know that~$Q(\eta)=\sum_{i=1}^d Q_i(\eta)$, where~$Q_i(\eta)=\eta_{i}- \lambda^\star(\eta)/d$ for all~$i\in\Xi$. By Proposition~\ref{prop:linear:predictor}\,\ref{duality:thm:ii}, the dual oracle subproblem~\eqref{dual:problem:linearized} is therefore equivalent to
\begin{equation}
    \label{eq:dual-partial-minimum}
    \mc J^\star = \min_{\eta\in[\underline\eta,\overline\eta]} \sum_{i=1}^d Q_i(\eta),
\end{equation}
where the variable bounds $\underline\eta,\overline\eta\in\R^d$ are defined through
\begin{align*}
\underline\eta_i = \max_j \{C_{ij}\}\quad\text{and}\quad
    \bar{\eta}_{i}=\frac{1}{1-e^{-r}}\left(d \max_{i,j}C_{ij}-e^{-r} \tr {C^\top P'} \right)-\sum_{k \neq i} \underline{\eta}_{k} \quad \forall i\in\Xi.
\end{align*}
Problem~\eqref{eq:dual-partial-minimum} is amenable to stochastic gradient descent algorithms.

\begin{proof}[Proof of Theorem~\ref{thm:correct:algsub}]
	The claim follows directly from the convergence results in~\citep[Section~2.2]{nemirovski09Robust} applied to the reformulation~\eqref{eq:dual-partial-minimum} of the dual oracle subproblem~\eqref{dual:problem:linearized}.
\end{proof}

\section{Proofs for Section~\ref{sect:hypo:test}}

\begin{proof}[Proof of Proposition~\ref{eq:prop:HT}]

By the large deviation principle for the estimator $\widehat \theta_T$ established in Lemma~\ref{Lem:LDP}, we have
\begin{align*} 
    \limsup_{T\to\infty}\frac{1}{T} \log \alpha_T \leq -\inf_{\theta'\not \in\interior \mathcal{B}} \Dc{\theta'}{\theta^{(1)}}
    =-\inf_{\theta'\not\in \mathcal{B}} \Dc{\theta'}{\theta^{(1)}}
    = -r,
\end{align*}
where the first equality holds because $\mathcal{B}$ is open thanks to the continuity of $\Dc{\theta'}{\theta}$; see Proposition~\ref{prop:properties:conditional:entropy}. To prove the second equality, denote by $\theta^\star$ a minimizer of $\min_{\theta'\in\Theta'} \{\Dc{\theta'}{\theta^{(1)}} : \Dc{\theta'}{\theta^{(1)}}\geq \Dc{\theta'}{\theta^{(2)}}\}$, which exists due to the compactness of~$\Theta'$ and the continuity of~$\Dc{\theta'}{\theta}$ on~$\Theta'\times\Theta$. It is then clear that $\Dc{\theta^\star}{\theta^{(1)}}= \Dc{\theta^\star}{\theta^{(2)}}=r$.

Similarly, the large deviation principle for $\widehat \theta_T$ implies that
\begin{align*}
    \liminf_{T\to\infty}\frac{1}{T} \log \alpha_T \geq -\inf_{\theta'\not\in\cl \mathcal{B}} \Dc{\theta'}{\theta^{(1)}}
    =-\inf_{\theta'\not\in \mathcal{B}} \Dc{\theta'}{\theta^{(1)}}
    =-r,
\end{align*}
where the first equality follows again from the continuity of $\Dc{\theta'}{\theta}$. In summary, we have thus shown that $\lim_{T\to\infty}\frac{1}{T} \log \alpha_T = -r$.
 \begin{figure}[htb] 
 \centering
   \scalebox{0.7}{\begin{tikzpicture}
\begin{axis}[
anchor=center,width=2.5in,
height=1.5in,
xmax=1,xmin=0,
ymin=0,ymax=1,
xticklabels={},
yticklabel=\ ,
	extra description/.code={
		\node[below] at (axis cs:0.4,-0.02) {$\theta^\star$};
		\node[below] at (axis cs:0.8,-0.01) {$\xcancel{\theta^\star}$};
		\node[below] at (axis cs:0.5,0) {$\theta^{(1)}$};
		\node[below] at (axis cs:0.2,0) {$\theta^{(2)}$};
	},
at={(1.011111in,0.641667in)},
scale only axis,
legend style={legend cell align=left,align=left,draw=white!15!black}
]
\addplot [color=blue,solid, smooth, line width=0.8]{(x-0.2)^2} ;
\addlegendentry{$\Dc{\bar\theta}{\theta^{(2)}}$};

  \addplot [color=red,solid, smooth,line width=0.8]{4*(x-0.5)^2} ;
  \addlegendentry{$\Dc{\bar\theta}{\theta^{(1)}}$};
\addplot[mark=*,color=green] coordinates {(0.4,0.04)} ;
\addplot[mark=*] coordinates {(0.8,0.36)} ;
\node[diamond,fill,color=blue,inner sep=2pt] at (axis cs:0.2,0) {};
\node[diamond,fill,color=red,inner sep=2pt] at (axis cs:0.48,0) {};
\addplot[mark=none,dashed] coordinates {(0.8,0.36) (0.8,0)};
\addplot[mark=none,solid] coordinates {(0.4,0.04) (0.4,0)} ;

\end{axis}
\end{tikzpicture}}
     \caption[]{Illustration of $\theta^{\star}$ relative to $\theta^{(1)}$ and $\theta^{(2)}$.}
     \label{fig:illustration:theta}
 \end{figure}
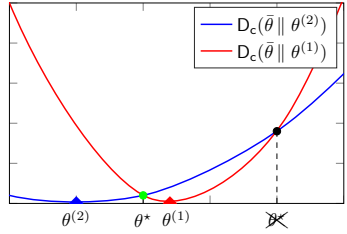
The proof of the assertion $\lim_{T\to\infty}\frac{1}{T} \log \beta_T = -r$ is analogous and thus omitted for brevity.
\end{proof}

\begin{proof}[Proof of Theorem~\ref{thm:optimality:HT}]
Assume for the sake of argument that there exists a decision rule induced by some open set $\bar{\mathcal{B}}$ such that the corresponding error probabilities satisfy $\lim_{T\to\infty}\frac{1}{T}\log\bar\beta_T < -r$ and $\lim_{T\to\infty}\frac{1}{T}\log\bar\alpha_T \leq -r$. 
The first assumption implies via the large deviation principle for $\widehat \theta_T$ that
\begin{equation}\label{eq:pf:HT:contr:ass}
   \inf_{\theta'\in \cl\bar{\mathcal{B}}} \Dc{\theta'}{\theta^{(2)}}= -\lim_{T\to\infty}\frac{1}{T}\log\bar\beta_T > r,
\end{equation}
where the equality follows from the continuity of~$\Dc{\theta'}{\theta}$ and the assumption that $\bar{\mathcal{B}}$ is open. Next, define~$\theta^\star$ as in the proof of Proposition~\ref{eq:prop:HT}, and recall that $\Dc{\theta^\star}{\theta^{(1)}}=\Dc{\theta^\star}{\theta^{(2)}}=r$. The inequality~\eqref{eq:pf:HT:contr:ass} thus implies that~$\theta^\star \notin \cl\bar{\mathcal{B}}$. 
In addition, choose any $\varepsilon>0$ with $\theta^\star_\varepsilon=(1-\varepsilon)\theta^\star + \varepsilon \theta^{(1)}\in\Theta \not\in \cl\bar{\mc B}$, which exists because~$\cl\bar{\mc B}$ has an open complement. As it is easy to verify that~$\Dc{\theta'}{\theta}$ is strictly convex in~$\theta'$ for any~$\theta\in\Theta$, we have
$\Dc{\theta^\star_\varepsilon}{\theta^{(1)}} < \Dc{\theta^\star}{\theta^{(1)}}=r$ and
\begin{equation*}
    \inf_{\theta'\not\in \cl \bar{\mathcal{B}}} \Dc{\theta'}{\theta^{(1)}}\leq \Dc{\theta^\star_\varepsilon}{\theta^{(1)}} < r.
\end{equation*}
This in turn implies via the large deviation principle for $\widehat \theta_T$ that
\begin{align*}
 \liminf_{T\to\infty}\frac{1}{T}\log \bar\alpha_T =
    \liminf_{T\to\infty}\frac{1}{T}\log \mathbb{P}_{\theta^{(1)}}(\widehat\theta_T\not\in \bar{\mathcal{B}}) 
    \geq -\inf_{\theta'\not\in\cl\bar{\mathcal{B}}} \Dc{\theta'}{\theta^{(1)}} > -r,
\end{align*}
which contradicts our second assumption. Thus, the claim follows.
\end{proof}

\subsection{Coin Tossing with Markovian Coins}\label{section:MCoin:example}
We now highlight that the decay rate $r$ of the error probabilities in Proposition~\ref{eq:prop:HT} can be arbitrarily small, which indicates that it can be arbitrarily difficult to distinguish two Markov chains based on finitely many samples.
To this end, we consider a coin flipping example involving a hypothetical Markovian coin, where the probability of seeing a head (H) or tail (T) depends on the outcome of the last coin toss. That is, we assume that $\{\xi_t\}_{t\in\N}$ constitutes a stationary Markov chain with finite state space $\Xi = \{{\rm H},{\rm T}\}$, and we denote by 
\[
	\Theta=\{\theta\in\Re^{2\times 2}_{++}:\theta_{\rm HH}+2\theta_{\rm HT}+\theta_{\rm TT}=1,~\theta_{\rm HT}=\theta_{\rm TH}\}
\]
the set of all possible balanced probability mass functions of the doublet $(\xi_t,\xi_{t+1})$. We also define the estimator $\widehat \theta_T$ as the empirical doublet distribution after $T$ tosses as defined in \eqref{MC:estimator}. In the following, we consider two different Markov coins with doublet distributions $\theta^{(1)}$, $\theta^{(2)}\in\Theta$. Specifically, we assume henceforth that the doublet distribution $\theta^{(1)}$ and the corresponding transition probability matrix $P_{\theta^{(1)}}$ of the first coin are given by
\[
	\theta^{(1)}= \begin{pmatrix} \frac{1-\varepsilon}{2} & \frac{\varepsilon}{2} \\ \frac{\varepsilon}{2} & \frac{1-\varepsilon}{2} \end{pmatrix} \quad \mbox{and} \quad P_{\theta^{(1)}} = \begin{pmatrix} 1-\varepsilon & \varepsilon \\ \varepsilon & 1-\varepsilon \end{pmatrix},
\]
respectively, for some $\varepsilon \in(0,1)$. The unique stationary distribution corresponding to this model is given by $\pi_{\theta^{(1)}} = (1/2, 1/2)$. We compare the reference coin encoded by $\theta^{(1)}$ with a second Markov coin parameterized in $\varepsilon$, whose doublet distribution and transition probability matrix are given by
\[
	\theta^{(2)}= \begin{pmatrix} \varepsilon^2 & \varepsilon(1-\varepsilon) \\ \varepsilon(1-\varepsilon) & (1-\varepsilon)^2 \end{pmatrix}\quad \mbox{and} \quad P_{\theta^{(2)}} = \begin{pmatrix} \varepsilon & 1-\varepsilon \\ \varepsilon & 1-\varepsilon \end{pmatrix},
\]
respectively. The stationary distribution corresonding to $\theta^{(2)}$ is given by $\pi_{\theta^{(2)}}= (\varepsilon, 1-\varepsilon)$. Note that the second coin is actually memoryless and follows an i.i.d.\ process.
\begin{figure}
\centering
\subfigure[Markov coin corresponding to $\theta^{(1)}$]{

\begin{tikzpicture}[->,>=stealth',shorten >=1pt,auto,node distance=2.8cm]
\tikzstyle{every state}=[draw=black,thick,text=black,scale=1]
\node[state]         (H)              {H};
\node[state]         (T) [right of=H] {T};
\path (T) edge  [bend right] node[above] {$\varepsilon$} (H);
\path (H) edge  [bend right] node[below] {$\varepsilon$} (T);
\path (H) edge  [loop left] node {$1-\varepsilon$} (H);
\path (T) edge  [loop right] node {$1-\varepsilon$} (T);
\end{tikzpicture}\label{fig:markov_Q}} \qquad
\subfigure[Markov coin corresponding to $\theta^{(2)}$]{

\begin{tikzpicture}[->,>=stealth',shorten >=1pt,auto,node distance=2.8cm]
\tikzstyle{every state}=[draw=black,thick,text=black,scale=1]
\node[state]         (H)              {H};
\node[state]         (T) [right of=H] {T};
\path (T) edge  [bend right] node[above] {$\varepsilon$} (H);
\path (H) edge  [bend right] node[below] {$1-\varepsilon$} (T);
\path (H) edge  [loop left] node {$\varepsilon$} (H);
\path (T) edge  [loop right] node {$1-\varepsilon$} (T);
\end{tikzpicture}\label{fig:markov_P}}
\caption{Transition probability diagrams of the Markov coins corresponding to the models $\theta^{(1)}$ and $\theta^{(2)}$. Although the Markov coins are vastly different for small $\varepsilon$, it is nevertheless hard to distinguish them based on finitely many samples.}
\label{fig:markov_coins}
\end{figure}
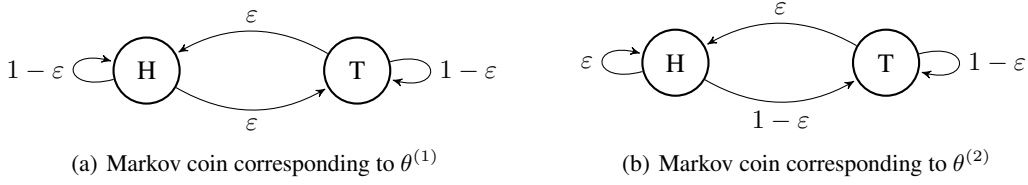
The conditional relative entropy of $\theta^{(2)}$ with respect to the reference coin $\theta^{(1)}$ evaluates to
\begin{align*}
	\Dc{\theta^{(2)}}{\theta^{(1)}} ~&=\sum_{i\in\Xi} (\pi_{\theta^{(2)}})_i \, \mathsf{D}((P_{\theta^{(2)}})_{i\cdot}\|(P_\theta^{(1)})_{i\cdot})  
= \varepsilon (1-2\varepsilon) \log\left(\frac{1-\varepsilon}{\varepsilon}\right), 
\end{align*}
which is visualized in Figure~\ref{fig:markovian:coin}. Observe that $\Dc{\theta^{(2)}}{\theta^{(1)}}$ vanishes for $\varepsilon=1/2$, in which case the two Markov coins are stochastically indistinguishable. Maybe surprisingly, however, $\Dc{\theta^{(2)}}{\theta^{(1)}}$ also vanishes in the limit $\varepsilon\downarrow 0$, in which case the two Markov coins are dramatically dissimilar. This can be seen, for instance, by comparing the limiting stationary probability mass functions $\pi_{\theta^{(1)}}=(1/2, 1/2)$ and $\pi_{\theta^{(2)}}=(0, 1)$. If $\varepsilon$ is small, Lemma~\ref{Lem:LDP} implies that for any Borel set $\mc D\subseteq \Theta$ with $\theta^{(2)}\in\mc D$ and $\theta^{(1)}\notin \mc D$, the probability $\mb P_{\theta^{(1)}}(\hat \theta_T\in \mc D)$ decays slowly as the sample size $T$ increases. Thus, it is hard to distinguish the coins corresponding to $\theta^{(1)}$ and $\theta^{(2)}$ solely based on data in spite of their fundamental differences. Also the error probability decay rates of the decision rule in Proposition~\ref{eq:prop:HT} are arbitrarily small in this setting.
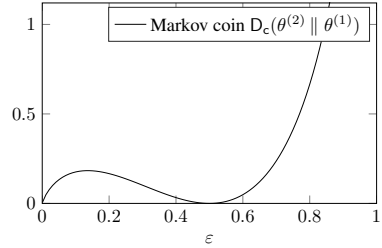
\begin{figure}[htb] 
\centering
   \scalebox{0.7}{ 
%
%
\definecolor{mycolor1}{rgb}{0.00000,0.44700,0.74100}%
\definecolor{mycolor2}{rgb}{0.85000,0.32500,0.09800}%
\begin{tikzpicture}

\begin{axis}[%
width=2.5in,
height=1.5in,
at={(1.011111in,0.641667in)},
scale only axis,
xmin=0,
xmax=1,
ymin=0,
ymax=1.12007299270073,
xlabel={$\varepsilon$},
legend style={legend cell align=left,align=left,draw=white!15!black}
]
\addplot [color=black,solid, line width=0.5]
  table[row sep=crcr]{%
0.001	0.00689294126909126\\
0.010989898989899	0.0483646206391645\\
0.020979797979798	0.0772422012671515\\
0.030969696969697	0.100032479989821\\
0.040959595959596	0.118579154383337\\
0.050949494949495	0.133824484267017\\
0.0609393939393939	0.14635580745603\\
0.0709292929292929	0.156581374795828\\
0.0809191919191919	0.164805545597715\\
0.0909090909090909	0.171266659809475\\
0.10089898989899	0.17615830529937\\
0.110888888888889	0.17964223460569\\
0.120878787878788	0.181856690759964\\
0.130868686868687	0.182922031200393\\
0.140858585858586	0.182944673420554\\
0.150848484848485	0.182019951481954\\
0.160838383838384	0.180234239550084\\
0.170828282828283	0.177666566794956\\
0.180818181818182	0.174389869971673\\
0.190808080808081	0.170471982007062\\
0.20079797979798	0.165976424411008\\
0.210787878787879	0.160963051371308\\
0.220777777777778	0.155488579997362\\
0.230767676767677	0.149607031987096\\
0.240757575757576	0.143370105557287\\
0.250747474747475	0.136827491891654\\
0.260737373737374	0.13002714703968\\
0.270727272727273	0.123015527758117\\
0.280717171717172	0.11583779796957\\
0.290707070707071	0.10853801114287\\
0.30069696969697	0.101159272856808\\
0.310686868686869	0.093743887006686\\
0.320676767676768	0.0863334884911506\\
0.330666666666667	0.0789691647310166\\
0.340656565656566	0.0716915679901533\\
0.350646464646465	0.0645410201673989\\
0.360636363636364	0.0575576114902993\\
0.370626262626263	0.050781294353073\\
0.380616161616162	0.0442519733926589\\
0.390606060606061	0.0380095927805013\\
0.40059595959596	0.0320942216181536\\
0.410585858585859	0.0265461382575712\\
0.420575757575758	0.0214059143188477\\
0.430565656565657	0.016714499146806\\
0.440555555555556	0.0125133054315582\\
0.450545454545455	0.00884429671583964\\
0.460535353535354	0.00575007752295911\\
0.470525252525253	0.00327398686344144\\
0.480515151515152	0.00146019591612586\\
0.49050505050505	0.000353810731284453\\
0.50049494949495	9.80870332618618e-07\\
0.510484848484849	0.000449014980482896\\
0.520474747474747	0.00174650440529549\\
0.530464646464647	0.00394345605618511\\
0.540454545454545	0.00709143591890492\\
0.550444444444444	0.0112437247470622\\
0.560434343434343	0.0164554877070517\\
0.570424242424242	0.022783959991929\\
0.580414141414141	0.0302886507237536\\
0.59040404040404	0.0390315678248584\\
0.600393939393939	0.0490774669708857\\
0.610383838383838	0.0604941282579704\\
0.620373737373737	0.0733526648429122\\
0.630363636363636	0.0877278685735298\\
0.640353535353535	0.103698598548393\\
0.650343434343434	0.12134821967136\\
0.660333333333333	0.14076509964902\\
0.670323232323232	0.162043174585861\\
0.680313131313131	0.185282595450871\\
0.69030303030303	0.210590470336115\\
0.700292929292929	0.238081720755855\\
0.710282828282828	0.267880074448597\\
0.720272727272727	0.300119222519549\\
0.730262626262626	0.334944175671908\\
0.740252525252525	0.372512863236609\\
0.750242424242424	0.412998030435395\\
0.760232323232323	0.456589504803741\\
0.770222222222222	0.503496923382856\\
0.780212121212121	0.553953040215389\\
0.79020202020202	0.608217771845231\\
0.800191919191919	0.666583191375177\\
0.810181818181818	0.729379755886807\\
0.820171717171717	0.796984157984139\\
0.830161616161616	0.869829346045468\\
0.840151515151515	0.948417485346674\\
0.850141414141414	1.03333697605129\\
0.860131313131313	1.12528517580504\\
0.870121212121212	1.22509931883314\\
0.880111111111111	1.33379950378764\\
0.89010101010101	1.45264995700211\\
0.900090909090909	1.58324888161075\\
0.910080808080808	1.72766474983094\\
0.920070707070707	1.8886515318945\\
0.930060606060606	2.07000560875707\\
0.940050505050505	2.27719472997555\\
0.950040404040404	2.51855609257906\\
0.960030303030303	2.80782794480736\\
0.970020202020202	3.17034227764064\\
0.980010101010101	3.66202430996714\\
0.99	4.45818527860058\\
};
\addlegendentry{Markov coin $\Dc{\theta^{(2)}}{\theta^{(1)}}$};

\end{axis}
\end{tikzpicture}
    \caption[]{
    For $\varepsilon=1/2$ the Markov coins induced by the models $\theta^{(1)}$ and $\theta^{(2)}$ are indistinguishable, and thus $\Dc{\theta^{(2)}}{\theta^{(1)}}$ vanishes. However, $\Dc{\theta^{(2)}}{\theta^{(1)}}$ vanishes also as $\varepsilon$ approaches $0$ --- even though the Markov coins are dramatically different in this limit.}
    \label{fig:markovian:coin} 
\end{figure}


\section{DFO Algorithm for the Prescriptor Problem}
\label{sec:dfo-algorithm}

\begin{algorithm}[H]
	\SetKwInOut{Input}{Input}
	\SetKwInOut{Output}{Output}
	\Input{ $ N, \ x_0, \ \alpha_0, \ 0<\beta_{1} \leq \beta_{2}<1, \text { and } \gamma \geq 1$ }
	\Output{ $ \widehat{c}_{r}\left(x_N,\theta^{\prime}\right) \approx\min_{x\in X}\widehat{c}_{r}\left(x\right) $}
	\SetAlgoLined
	\DontPrintSemicolon
	\For{$k=0,\ldots , N$}{
		\textbf{Search step}: fetch $x $ with\\ $\widehat{c}_{r}\left(x,\theta^{\prime}\right)<\widehat{c}_{r}\left(x_k,\theta^{\prime}\right)-\alpha_{k}^2$; \\
		\eIf{found}{declare the search successful and set $ x_{k+1}=x $}{
		\textbf{Poll step}: Choose positive spanning set $D_k\subseteq \R^n$, and evaluate $ \widehat{c}_{r} $ at all poll points $P_k=\left\{x_{k}+\alpha_{k} d: d \in D_{k}\right\}.$ \\
		\eIf{$\widehat{c}_{r}\left(x_{k}+\alpha_{k} d_{k},\theta^{\prime}\right)<\widehat{c}_{r}\left(x_k,\theta^{\prime}\right)-\alpha_{k}^2$}{declare the iteration successful and set \\$ x_{k+1}=x_{k}+\alpha_{k} d_{k}. $}{declare the iteration unsuccessful and set \\ $ x_{k+1}=x_{k}$.}	
	}
	\eIf{the iteration was successful}{set $ \alpha_{k+1}\in[\alpha_k,\gamma\alpha_k] $}{set $ \alpha_{k+1}\in[\beta_{1}\alpha_k,\beta_{2}\alpha_k] $}}
	\caption{Directional direct-search method for the prescriptor problem~\eqref{def:DRO:decision}~\cite{vicente2013worst}}\label{alg:prescri}
\end{algorithm}

\end{document}